\theoremstyle{definition}
\newtheorem{definition}{Definition}[section]
\newtheorem{remark}[definition]{Remark}
\theoremstyle{plain}
\newtheorem{theorem}[definition]{Theorem}
\newtheorem{proposition}[definition]{Proposition}
\newtheorem{lemma}[definition]{Lemma}
\newtheorem{corollary}[definition]{Corollary}
\numberwithin{equation}{section}
\def \alt96 {`}
\def \RN {\mathds{R}^N}
\def \R {\mathds{R}}
\def \XX {\mathcal{X}}
\def \dsy {\displaystyle}
\def \loc {\mathrm{loc}}
\def \N {\mathds{N}}
\def \G {\mathbb{G}}
\def \LieG {\mathrm{Lie}(\G)}
\def \e {\varepsilon }
\def \dela {{\delta_\lambda}}
\def \LL {{\mathcal{L}}}
\def \d {\mathrm{d}}
\def \de {\partial}
\def \longto {\longrightarrow}
\def \LL {\mathcal{L}}
\def \supH {\overline{\mathcal{L}}}
\def \subH {\underline{\mathcal{L}}}
\def \subHb {\underline{\mathcal{L}}_b}
\begin{document}
 \author{Stefano Biagi and Ermanno Lanconelli}
 \title{Large sets at infinity and Maximum Principle \\ on unbounded domains for 
 a class of \\ sub-elliptic operators}
 \maketitle
 \begin{abstract}
  \noindent Maximum Principles on unbounded domains play a crucial r\^ole in several problems
  related to linear second-order PDEs of elliptic and parabolic type.
  In this paper we consider 
  a class of sub-elliptic operators $\LL$ in $\RN$ and we   
  establish some criteria for an unbounded open set
  to be a Maximum Principle set for $\LL$. 
  We extend some classical results related to the Laplacian
  (by Deny, Hayman and Kennedy) and to the sub-Laplacians on stratified Lie groups
  (by Bonfiglioli and the second-named author).
 \end{abstract}
 \section{Introduction and main results}  \label{sec:mainassnot}
  It is quite well-known that maximum principles on unbounded 
  domains play a crucial r\^ole in looking
  for symmetry properties of solutions to semilinear Poisson-type
  equations, by using the celebrated
  moving planes or sliding method:
  see, e.g., \cite{BN, BCN, BHM} for the Euclidean setting and
  see \cite{BLanco, BP} for the Heisenberg 
  group setting.
  
  In the present paper we extend to a wide class of subelliptic PDEs
  some maximum principles in unbounded domains 
  holding true for the Euclidean Laplace operator
  (by Deny, Hayman and Kennedy) and for the sub-Laplace
  operators on stratified Lie groups
  (by Bonfiglioli and the second-named author). \bigskip

  \noindent To be more prices, 
  throughout the sequel we shall be concerned with second-order linear
   partial differential operators (PDOs, in the sequel) of the form
   \begin{equation} \label{eq:PDOformintro}
   \begin{split}
   \LL = \frac{1}{V(x)}\,\sum_{i = 1}^N\frac{\de }{\de x_i}
   \bigg(\sum_{j = 1}^NV(x)\,a_{i,j}(x)\,\frac{\de }{\de x_j}\bigg)
   = \frac{1}{V(x)}\,\mathrm{div}\big(V(x)\,A(x)\cdot\nabla\big).
   \end{split}
  \end{equation}
  We shall always assume, without any further comment, 
  that the following \emph{stru\-ctu\-ral 
  assumptions} are satisfied:
  \begin{itemize}
   \item[(H1):] $V,a_{i,j}\in C^\infty(\RN,\R)$ for all $i,j$
   and $V > 0$ on the whole of $\RN$;
   \item[(H2):] the matrix $A(x) = \big(
    a_{i,j}(x)
   \big)_{i,j}$ 
   is symmetric and positive semi-definite for every $x\in\RN$. Furthermore, 
   $$\text{$\mathrm{trace}(A(x)) > 0$ for every $x\in\RN$};$$
   \item[(H3):] there exists a real $\e > 0$ such that
   both $\LL$ and $\LL_\e := \LL - \e$ are $C^\infty$-hypo\-el\-lip\-tic
   in every open subset
   \footnote{We remind that a linear PDO $P$ with smooth coefficients
   is $C^\infty$-hypoelliptic in an open set $\Omega\subseteq\RN$ if any 
   distributional solution to $Pu = f$ is smooth
   in $\Omega$ whenever $f$ is smooth.} of $\RN$. 
  \end{itemize}
  Under these assumptions, a satisfactory Potential Theory for $\LL$ can be constructed
  (see, e.g., 
  \cite{BattagliaBiagi, BBB}).
  In this theory, the ``harmonic'' functions are the \emph{$\LL$-harmonic functions}, 
  that is, the (smooth) solutions to 
  $$\LL u = 0$$
  on some open subset of $\RN$. The corresponding \emph{$\LL$-subharmonic functions}
   are the
  upper semi-continuous (u.s.c., for short) 
  functions $u:\Omega\to [-\infty,\infty)$  (where $\Omega$ is an open subset of 
  $\RN$) such that 
  \begin{itemize}
  \item[(i)] $\{x\in\Omega: u(x) > -\infty\}$ is dense in $\Omega$;
  
  \item[(ii)] for every bounded open set $V\subseteq\overline{V}\subseteq\Omega$ 
  and for every
  function $h$ $\LL$-harmonic in $V$ and continuous up to $\de V$ such that
  $u_{\big|\de V}\leq h_{\big|\de V}$, one has $u\leq h$ in $V$.
  \end{itemize}
  {As a consequence of the (strong) Harnack inequality for $\LL$
  proved in \cite{BBB} (and of the fact that $h\equiv 1$
  is $\LL$-harmonic), the following Maximum Principle for $\LL$-subharmonic
  functions holds true
  (see Theorem \ref{thm.minimumsupH} in the Appendix):}
  \medskip
  
  Let $\Omega\subseteq\RN$ be open and \emph{bounded} and 
  let $u\in\subH(\Omega)$. Then
  \begin{equation} \label{eq.WMPintro}
   \text{$\limsup_{x\to \xi}u(x) 
   \leq 0$\,\,for every $\xi\in\de\Omega$} \quad \Longrightarrow
   \quad \text{$u\leq 0$ in $\Omega$}.
  \end{equation}   
  Here and in what follows, we adopt the 
  subsequent notations: \medskip
  
  \noindent -\,\,$\subH(\Omega)$ denotes the cone of the $\LL$-subharmonic
  functions in the open set $\Omega\subseteq\RN$; \vspace*{0.12cm}
  
  \noindent -\,\,$\subHb(\Omega)$ denotes
  the cone of the bounded above $\LL$-subharmonic 
  functions in $\Omega$; \vspace*{0.12cm} 
  
  \noindent -\,\,$\mathcal{L}(\Omega)$ denotes linear space
  of the $\LL$-harmonic functions in $\Omega$; \vspace*{0.12cm}
  
  \noindent -\,\,$\supH(\Omega)$ denotes the cone
  of the $\LL$-superharmonic functions in $\Omega$
  (by definition, a function $u$ is \emph{$\LL$-superharmonic} 
  (in $\Omega$) if $-u$ is $\LL$-subharmonic
  in the same set). \medskip
  
  {A simple yet remarkable consequence of 
  \eqref{eq.WMPintro} is the fact that
  a function $u$ in $C^2(\Omega,\R)$ is 
  $\LL$-subharmonic in $\Omega$ \emph{if and only if}
  $\LL u \geq 0$ on $\Omega$ (see, e.g., \cite{BattagliaBiagi}).} \bigskip
  
  \noindent Obviously, we cannot expect that the previous Maximum Principle holds 
  true if $\Omega$ is
  \emph{not bounded}, and if we do not assume in \eqref{eq.WMPintro} some extra 
  conditions on the function
  $u$; the main aim of this paper is to provide conditions on an
   \emph{unbounded} open set $\Omega$ ensuring
  \eqref{eq.WMPintro} for every \emph{bounded above} $\LL$-subharmonic function in 
  $\Omega$. \medskip

  \noindent To present our main results, it is convenient to fix the following definition.
    \begin{definition} \label{def:MPsetL}
   Let $\Omega\subseteq\RN$ be open. We say that $\Omega$
   is a \emph{maximum principle set} (MP set, in short) for $\LL$
   if it satisfies the following property:
   \begin{equation} \label{eq.MPsetL}
    \begin{cases}
    u\in\subHb(\Omega) \\[0.1cm]
    \dsy \limsup_{x\to\xi}u(x)\leq 0 & \text{for every $\xi\in\de\Omega$}
    \end{cases} \qquad \Longrightarrow \qquad
    u \leq 0 \quad \text{in $\Omega$}.
   \end{equation}
  \end{definition}
  \noindent We point out that any u.s.c.\,function 
  $u:\Omega\to[-\infty,\infty)$ satisfying the boundary
  condition in \eqref{eq.MPsetL} is bounded above if $\Omega$ is bounded. Then, by
  the previously recalled Maximum Principle, \emph{every bounded} open set
  is a MP set for $\LL$ and in \eqref{def:MPsetL} we can 
  replace $\subHb(\Omega)$ with $\subH(\Omega)$.
  \medskip
  
  As we shall see, the notion of maximum principle set
  (for $\LL$) is closely related to the one of $\LL$-largeness
  at infinity, defined as follows.
 \begin{definition} \label{def.LLthinness}
	We say that a subset $F$ of $\RN$ is \emph{$\LL$-thin at infinity}
	if it is possible to find a function $u\in\subHb(\RN)$ such that
	\footnote{If $F$ is bounded, we agree to let 
	$\dsy \limsup_{
	 \begin{subarray}{c}
	  x\to\infty \\
	  x\in F
	  \end{subarray}}u(x) = \infty$.}
	\begin{equation} \label{eq.LLthinness}
	 \limsup_{
	 \begin{subarray}{c}
	  x\to\infty \\
	  x\in F
	  \end{subarray}
	 }u(x) < \limsup_{
	 \begin{subarray}{c}
	  x\to\infty \\
	  x\in \RN
	  \end{subarray}
	 }u(x).
	\end{equation}
    If $F\subseteq\RN$ is \emph{not} $\LL$-thin at infinity, we shall say that
   $F$ is \emph{$\LL$-la\-rge at in\-fi\-ni\-ty}. Explicitly,
  $F$ is $\LL$-large at infinity if and only if
  $$\limsup_{
	 \begin{subarray}{c}
	  x\to\infty \\
	  x\in F
	  \end{subarray}
	 }u(x) = \limsup_{
	 \begin{subarray}{c}
	  x\to\infty \\
	  x\in \RN
	  \end{subarray}
	 }u(x) \quad
	 \text{for every $u\in \subHb(\RN)$}.$$ 
  \end{definition}
  Here is our first basic result.
    \begin{theorem} \label{thm.LLthinMPset}
	An open set $\Omega\subseteq\RN$ is a maximum principle set 
	for $\LL$ \emph{if and only if}
	its complement $\RN\setminus\Omega$ \emph{is $\LL$-large at infinity}.
  \end{theorem}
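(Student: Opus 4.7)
The heart of the argument is a single extension construction together with a weak Liouville-type consequence of the bounded-domain maximum principle recalled in the excerpt. Given any $u\in\subHb(\Omega)$ satisfying $\limsup_{x\to\xi}u(x)\le 0$ for every $\xi\in\de\Omega$, I will work with the ``paste-by-$0$'' extension
$$\tilde u(x):=\begin{cases}\max\{u(x),0\}&\text{if }x\in\Omega,\\ 0&\text{if }x\in F:=\RN\setminus\Omega.\end{cases}$$
My first claim is that $\tilde u\in\subHb(\RN)$ and $\tilde u\equiv 0$ on $F$. Upper semi-continuity and boundedness are routine (the boundary hypothesis on $u$ is used precisely at points of $\de\Omega$). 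Subharmonicity is clear on the interior of $\Omega$ (as the max of two subharmonic functions) and on the interior of $F$ (where $\tilde u$ is constant); at a point $\xi\in\de\Omega$ one invokes the defining sub-mean property against a ball $V$ and an $\LL$-harmonic $h\ge\tilde u$ on $\de V$, first using the weak MP to deduce $h\ge 0$ on $V$ (so $\tilde u\le h$ automatically on $V\cap F$) and then the weak MP for $\tilde u-h$ on the bounded set $V\cap\Omega$.

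The second ingredient is an auxiliary ``Liouville'' fact on $\RN$: if $w\in\subHb(\RN)$ and $\limsup_{x\to\infty}w(x)\le 0$, then $w\le 0$ on $\RN$. This drops out of the weak MP on bounded domains: given $\e>0$, pick $R$ so that $w<\e$ outside $B_{R/2}$, apply the weak MP for $w-\e$ on $B_R$, and let $\e\to 0^+$.

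With these two pieces the theorem splits into two short arguments. If $F$ is $\LL$-large at infinity, then $\tilde u\equiv 0$ on $F$ forces $\limsup_{x\to\infty,\,x\in F}\tilde u=0$, hence by largeness $\limsup_{x\to\infty}\tilde u=0$; the Liouville fact yields $\tilde u\le 0$ and thus $u\le 0$ on $\Omega$, so $\Omega$ is an MP set. Conversely, suppose $\Omega$ is \emph{not} an MP set and pick a counterexample $u$ with $u(x_0)>0$; the extension $\tilde u$ still vanishes on $F$ but satisfies $\tilde u(x_0)>0$, so the Liouville fact (applied contrapositively) forces $\limsup_{x\to\infty}\tilde u>0=\limsup_{x\to\infty,\,x\in F}\tilde u$, which is exactly the $\LL$-thinness of $F$ at infinity.

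The only non-routine step is the pasting verification that $\tilde u$ is $\LL$-subharmonic across $\de\Omega$; the remainder is an almost mechanical deployment of the weak maximum principle \eqref{eq.WMPintro} on large balls. Notably, this strategy avoids any appeal to the deeper balayage/reduite machinery of the Potential Theory for $\LL$, since the $\mathcal{L}$-thinness of $F$ is needed only in its \emph{existential} form (i.e., via the very function $\tilde u$ produced from the counterexample), rather than as the starting datum of an explicit construction.
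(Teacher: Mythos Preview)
Your proposal is correct and follows essentially the same route as the paper: the paste-by-zero extension (which the paper states as Lemma~\ref{lem.gluingsubHb} and proves in the Appendix via a local mean-value criterion, whereas you verify it directly through the weak MP on $V\cap\Omega$) together with the fact that a bounded $\LL$-subharmonic function on $\RN$ attains its supremum only at infinity (your ``Liouville fact'' is the content of the paper's Lemma~\ref{lem.conditionlimsup}/Corollary~\ref{cor.FisnotLLthin}). The only organizational difference is that for the implication ``MP set $\Rightarrow$ $\LL$-large'' the paper argues directly---applying the MP property on $\Omega$ to $u-\sup_{\RN\setminus\Omega}u$ for an arbitrary $u\in\subHb(\RN)$---and thus avoids the paste construction for that half, whereas you route both directions through the extension $\tilde u$.
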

 The proof of this theorem will be given in Section \ref{sec:thinsets}. 
 In Section \ref{sec:LLthinpset}, assuming that $\LL$ is endowed 
 with a global fundamental solution
 $$(x,y)\mapsto \Gamma(x;y),$$
 smooth out of the diagonal of $\RN\times\RN$ and satisfying 
 suitable structural conditions
 (see, precisely, assumptions (FS), (G) and (L)), we 
 shall provide a geometrical 
 criterion for a set to be $\LL$-large at infinity. This criterion 
 involves the \emph{superlevel sets} of $\Gamma$, which
 shall be called \emph{$\Gamma$-balls}: more precisely, 
 for every $x\in\RN$ and every $r > 0$, the 
 \emph{$\Gamma$-ball with center at $x$
 and radius $r$} is the set
 $$\Omega(x,r) := \bigg\{y\in\RN\,:\,\Gamma(x;y) > \frac{1}{r}\bigg\}.$$
 From our structural assumptions on $\Gamma$ it easily follows that the function
 $$\gamma(x,y) := \begin{cases}
  0, & \text{if $x = y$}, \\
  1/\Gamma(x;y), & \text{if $x \neq y$},
  \end{cases}
  $$
  is a pseudo-metric in $\RN$ and that 
  $\Omega(x,r)$ is actually the metric $\gamma$-ball
  centered at $x$ and with radius $r$, that is,
  $$\Omega(x,r) = \{y\in\RN:\gamma(x,y) < r\}.$$
  \noindent With the $\Gamma$-balls at hand, we can introduce
  the
  definition of $p_\LL$-unbounded set.
    \begin{definition} \label{def.pset}
   Let $F\subseteq\RN$ be any set and let $p\in(1,\infty)$. We say that $F$ 
   is \emph{$p_\LL$-bounded} 
   if there exists a countable family 
   $\mathcal{F} = \big\{\Omega(x_n,r_n)\big\}_{n\in J}$ such that
   \begin{itemize}
    \item[(a)] $F\subseteq \bigcup_{n\in J}\Omega(x_n,r_n)$;
    \item[(b)] $\dsy\sum_{n \in J}\big(\Gamma(0;x_n)\,r_n\big)^p 
    = \sum_{n \in J}\big(r_n/\gamma(0,x_n)\big)^p < \infty$.
   \end{itemize}
 If $F\subseteq\RN$ is not $p_\LL$-bounded, we shall say that
 $F$ is \emph{$p_\LL$-unbounded}.
  \end{definition}
  Then we have the following result.
    \begin{theorem} \label{thm.mainpsetthin}
   Let $F\subseteq\RN$ be any (non-void) set. If 
   there exists some $p \in (1,\infty)$ such that
   $F$ \emph{is $p_\LL$-unbounded}, then $F$ is $\LL$-large at infinity.
  \end{theorem}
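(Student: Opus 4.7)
I prove the contrapositive: \emph{if $F$ is $\LL$-thin at infinity, then $F$ is $p_\LL$-bounded for every $p\in(1,\infty)$}. Let $u\in\subHb(\RN)$ be a witness of \eqref{eq.LLthinness}. Since $\subHb(\RN)$ is stable under the affine transformations $u\mapsto \alpha u+\beta$ with $\alpha>0$, and under truncation from below $u\mapsto\max(u,-c)$ (the max of two $\LL$-subharmonic functions is $\LL$-subharmonic), one may assume $u$ bounded with $-2\le u\le 0$, $\sup u=\limsup_{\RN\ni x\to\infty}u=0$ and $\limsup_{F\ni x\to\infty}u\le -1$. Fix a compact $K\subseteq\RN$ outside which $u\le -\tfrac12$ on $F$; any compact set can be covered by finitely many $\Gamma$-balls, which we may freely append to the final family, so it is enough to $p_\LL$-bound $F_0:=F\setminus K$.

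The function $v:=-u\in[0,2]$ is then a bounded, nonneg, $\LL$-superharmonic function with $v\ge\tfrac12$ on $F_0$ and $\liminf_{\RN\ni x\to\infty}v=0$. Invoking the Riesz-type representation for nonneg $\LL$-superharmonic functions on $\RN$ (available under the structural hypotheses (FS), (G), (L) on the global fundamental solution $\Gamma$), I write
$$v(x)=U^\mu(x)+h(x),\qquad U^\mu(x):=\int_{\RN}\Gamma(x,y)\,d\mu(y),$$
with $\mu$ a nonneg Radon measure and $h$ a nonneg $\LL$-harmonic function, both dominated by $v$, hence bounded. A Liouville-type theorem for bounded $\LL$-harmonic functions on $\RN$ (again a consequence of (FS)(G)(L)) forces $h$ to be a constant, and the constraint $\liminf_{\RN\ni x\to\infty}v=0$ combined with $U^\mu\ge 0$ pins this constant to $0$. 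Hence $v=U^\mu$, $U^\mu\ge\tfrac12$ on $F_0$, and $U^\mu(0)=v(0)<+\infty$.

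To construct the covering, for each $x\in F_0$ the map $r\mapsto\int_{\Omega(x,r)}\Gamma(x,y)\,d\mu(y)$ is increasing and tends to $U^\mu(x)\ge\tfrac12$ as $r\to\infty$, so one can select a \emph{natural scale} $r(x)>0$ where a definite fraction of the potential is already captured inside $\Omega(x,r(x))$. A Vitali-type covering lemma on the pseudo-metric space $(\RN,\gamma)$ --- which is doubling by virtue of (FS)(G)(L) --- then extracts from $\{\Omega(x,Cr(x)):x\in F_0\}$ a countable subfamily $\{\Omega(x_n,r_n)\}_{n\in J}$ covering $F_0$ and with pairwise disjoint cores. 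The summability $\sum_n(r_n/\gamma(0,x_n))^p<+\infty$ is finally obtained by a Harnack comparison of $\Gamma(0,x_n)$ with the average of $\Gamma(0,\cdot)$ over $\Omega(x_n,r_n)$, together with a H\"older/duality argument that bounds the $p$-sum by a finite integral of a suitable power of $\Gamma(0,\cdot)$ against $\mu$, itself controlled by (a power of) $U^\mu(0)$.

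\emph{The main obstacle} is exactly this last passage: upgrading the \virg linear\alt96 finiteness $U^\mu(0)<+\infty$ to an $L^p$-summability with arbitrary $p>1$ is not automatic. It is here that the sharp quantitative content of the structural hypotheses (FS), (G), (L) --- Harnack's inequality on $\Gamma$-balls, the doubling character of $\gamma$, and the controlled decay of $\Gamma(0,\cdot)$ at infinity --- must be used in full strength. The subsidiary difficulties (the Riesz decomposition, the Liouville step, and the reduction to $v=U^\mu$) are standard once the potential-theoretic framework built from (FS)(G)(L) is in place.
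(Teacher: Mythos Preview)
Your overall strategy aligns with the paper's: argue the contrapositive, normalize the witness $u$, pass to the pure potential $U^\mu$ via the global Riesz representation (the paper's Theorem~\ref{thm.reprformulaandintegral}; the Liouville step is exactly assumption (L)), and then try to cover the set $\{U^\mu\ge\tfrac12\}$ by a $p_\LL$-bounded family. The gap is precisely where you flag it, and your proposed device does not close it. A Vitali argument with disjoint cores controls at best a \emph{linear} functional of the local masses, and a Harnack comparison of $\Gamma(0,\cdot)$ across a ball only moves constants around; neither produces the extra factor $(r_n/\gamma(0,x_n))^{p-1}$ needed to sum $p$-th powers for \emph{arbitrary} $p>1$. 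You also invoke doubling of $(\RN,\gamma)$, but the paper proves this theorem under (FS), (G), (L) alone; assumption (D) only enters later, for the $\Gamma$-cone criterion, and cannot be used here.

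The paper's route is different at the technical core. It first establishes the stronger universal statement Theorem~\ref{lem.mainpsetLLthin}: for \emph{every} $u\in\subHb(\RN)$ and every $p>1$ there is a $p_\LL$-bounded set $F_0$ with $u(x)\to\sup u$ as $x\to\infty$ off $F_0$; Theorem~\ref{thm.mainpsetthin} then follows in two lines, since $F\setminus F_0$ remains unbounded and along it $u$ reaches its sup. The construction of $F_0$ uses not Vitali but a Cartan-type covering lemma (Lemma~\ref{lem.Cartantype}): for a \emph{finite} measure $\lambda$, the set $\{\Gamma\lambda\ge h\}$ is covered by $\Gamma$-balls with $\sum r_n^p\le A_p(\lambda(\RN)/h)^p$. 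This is applied not to $\mu$ (which need not be finite) but to its restrictions $\lambda_n$ to the $\gamma$-annuli $\{\theta^{n+2}<\Gamma_0<\theta^{n-1}\}$, with thresholds $h=\varepsilon_n$ tuned so that the resulting double sum is dominated by $\int_1^\infty \mu(\Omega(0,r))\,r^{-2}\,dr$, whose finiteness is the other half of Theorem~\ref{thm.reprformulaandintegral}. That annular localization together with the Cartan lemma is the missing idea that turns the ``linear'' information $U^\mu(0)<\infty$ into $p$-summability for every $p>1$.
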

  The proof of this theorem rests on the following result, 
  which is of independent interest: 
  it shows a deep property of the bounded above $\LL$-subharmonic
  functions.
   \begin{theorem} \label{lem.mainpsetLLthin}
   Let $u\in\subHb(\RN)$ and let $p\in(1,\infty)$ be arbitrarily fixed. 
   Then, it is possible to construct
   a $p_\LL$-bounded set $F\subseteq\RN$ such that
   \begin{equation} \label{eq.limitoutF}
    \lim_{\begin{subarray}{c}
   x\to\infty \\
   x\notin F
   \end{subarray}}u(x) = \sup_{\RN}u.
   \end{equation}
  \end{theorem}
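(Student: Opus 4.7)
The plan is to construct $F$ as a countable union of \emph{tails} of coverings (by $\Gamma$-balls) of the sub-level sets $E_n := \{x \in \RN : u(x) < -1/n\}$, ensuring both that the total $p_\LL$-weight is summable and that every bounded region has been discarded at a sufficiently late stage. After replacing $u$ by $u - M$ with $M := \sup_{\RN} u$ (which preserves $\subHb(\RN)$ and only shifts the target limit), we may assume $M = 0$, so $u \leq 0$. Each $E_n$ is then open by upper semicontinuity, and \eqref{eq.limitoutF} will follow as soon as we can guarantee $u(x) \geq -1/n$ at infinity outside $F$, for every $n$.

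The key technical step is the following claim: for each $n$, the set $E_n$ admits a countable covering $\{\Omega(x_{n,k}, r_{n,k})\}_{k}$ with $\sum_k (r_{n,k}/\gamma(0, x_{n,k}))^p < \infty$. My approach is to invoke a Riesz-type representation of $-u$, which is $\LL$-superharmonic and non-negative: in the potential-theoretic framework of \cite{BattagliaBiagi, BBB} one writes $-u = h + U^\mu$, where $h \geq 0$ is the greatest $\LL$-harmonic minorant of $-u$ and $U^\mu(x) = \int \Gamma(x; y)\,d\mu(y)$ is the $\LL$-potential of the Riesz measure $\mu := \LL u \geq 0$. Since $E_n \subseteq \{h > 1/(2n)\} \cup \{U^\mu > 1/(2n)\}$, it suffices to cover each of these two level sets. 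For the potential part, a Vitali-type argument adapted to $\Gamma$-balls is used: if $U^\mu(x) > \lambda$, the structural assumptions (FS), (G), (L) on $\Gamma$ introduced in Section~\ref{sec:LLthinpset} let one find $r_x > 0$ with $\mu(\Omega(x, r_x))$ comparable to $\lambda r_x$, and the finiteness of $U^\mu$ at a suitable base point (e.g.\,$\int \Gamma(0; y)\,d\mu(y) = h(0) - u(0) < \infty$) converts Vitali selections into a weighted sum $\sum(r_k/\gamma(0, x_k))^p < \infty$. The level set of the harmonic minorant $h$ is handled separately, e.g.\,via Harnack (noting that $\inf h = 0$ severely constrains $\{h > 1/(2n)\}$).

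Given the coverings, I would truncate: for each $n$, pick $K_n$ with $\sum_{k > K_n}(r_{n,k}/\gamma(0, x_{n,k}))^p < 2^{-n}$, and, since each $\Gamma$-ball is bounded (because $\Gamma(x; \cdot) \to 0$ at infinity), pick a Euclidean ball $B(0, R_n) \supseteq \bigcup_{k \leq K_n}\Omega(x_{n,k}, r_{n,k})$. Setting
\[
  F := \bigcup_{n \geq 1}\bigcup_{k > K_n}\Omega(x_{n,k}, r_{n,k}),
\]
$F$ is covered by $\Gamma$-balls of total $p$-weight at most $\sum_n 2^{-n} = 1$, so $F$ is $p_\LL$-bounded. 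For $x_j \to \infty$ with $x_j \notin F$: for each fixed $n$, $|x_j| > R_n$ eventually; if $x_j \in E_n$, then $x_j \in \Omega(x_{n,k}, r_{n,k})$ for some $k$, but $k \leq K_n$ is ruled out by $|x_j| > R_n$, so $k > K_n$ and $x_j \in F$, a contradiction. Hence $u(x_j) \geq -1/n$ eventually, and letting $n \to \infty$ gives $\lim u(x_j) = 0 = \sup u$.

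The hard part will be the Vitali-type covering lemma inside the key claim: establishing the weighted $p$-summability of the covering of $\{U^\mu > \lambda\}$ by $\Gamma$-balls. This is precisely where the structural hypotheses (FS), (G), (L) on the fundamental solution are essential, through the pseudo-metric triangle-type inequality for $\gamma$ and the comparison of $\Gamma(0; \cdot)$ across small $\Gamma$-balls centered at points far from the origin. A secondary but non-negligible issue is the \emph{global} Riesz decomposition of $-u$ on all of $\RN$; this should follow from the potential theory of \cite{BattagliaBiagi, BBB}, but has to be checked to apply to general bounded-above $\LL$-superharmonic functions, not just to potentials.
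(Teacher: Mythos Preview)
Your outer framework (normalize to $\sup u=0$, cover each sub-level set $E_n=\{u<-1/n\}$, keep only the tails of these covers, and assemble them into a single $p_\LL$-bounded $F$) is sound, and the final truncation argument is correct. The difficulties lie entirely in your ``key claim'', and here there are two issues.

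First, the harmonic minorant is a non-issue. Under assumption~(L) the paper's representation formula (Theorem~\ref{thm.reprformulaandintegral}) gives $u=u_0-\Gamma\mu$ exactly, i.e.\ after your normalization $-u=U^\mu$ and $h\equiv 0$. Your ``Harnack'' discussion of $\{h>1/(2n)\}$ is unnecessary; Liouville dispatches it immediately.

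Second, and more seriously, your Vitali/Cartan sketch for covering $\{U^\mu>\lambda\}$ does not yield the \emph{weighted} sum $\sum_k\big(r_k/\gamma(0,x_k)\big)^p<\infty$. A Cartan-type lemma (this is Lemma~\ref{lem.Cartantype} in the paper) covers $\{\Gamma\nu>\lambda\}$ by $\Gamma$-balls with $\sum_k r_k^{\,p}\le A_p(\nu(\RN)/\lambda)^p$, but this requires $\nu(\RN)<\infty$ and yields no factor of $\gamma(0,x_k)^{-p}$. For the full Riesz measure $\mu$, the total mass may be infinite, and nothing in your sketch explains how the base-point weight enters. Your sentence ``the finiteness of $U^\mu$ at a base point converts Vitali selections into a weighted sum'' is where the actual proof has to happen, and it is not a one-line Vitali consequence.

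The paper closes exactly this gap by an \emph{annular decomposition}: for each $n$ one works on the $\Gamma$-annulus $\Omega_n=\{\theta^{n+1}<\Gamma_0\le\theta^n\}$, splits $U^\mu=I_1+I_2+I_3$ according to whether the mass lies well inside, near, or well outside $\Omega_n$, and shows that $I_1,I_3$ are uniformly small on $\Omega_n$ using only \eqref{eq.convintegralmu}. The middle piece $I_2$ comes from a truncated measure $\lambda_n$ of \emph{finite} mass $\mu_n$, so the Cartan lemma applies and gives balls $\Omega(x_{k,n},r_{k,n})$ with $\sum_k r_{k,n}^{\,p}\lesssim(\mu_n/\varepsilon_n)^p$. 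Because these balls meet $\Omega_n$, one then proves $\Gamma_0(x_{k,n})\lesssim\theta^n$; this is precisely what supplies the weight, turning the unweighted Cartan bound into $\sum_{k}\big(\Gamma_0(x_{k,n})\,r_{k,n}\big)^p\lesssim\eta_n$, and summability of $\sum_n\eta_n$ follows again from \eqref{eq.convintegralmu}. In short: the annular localization is the missing idea that simultaneously makes the Cartan lemma applicable (finite mass) and produces the $\gamma(0,\cdot)^{-p}$ weight. Without it, your ``key claim'' is unproven.
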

  {In view of
   the above Theorem \ref{thm.mainpsetthin},
   it seems natural to look for some ``simple''
   criteria allowing to establish if a set $F\subseteq\RN$ is
   $p_\LL$-unbounded (for some $p > 1$).}
  {In Section \ref{sec.suffcondCone}, assuming that 
  the $\Gamma$-balls satisfy a kind of \emph{doubling
  and reverse doubling} condition (see, precisely, 
  assumption (D)), we shall obtain
  such a criterion via the notion of $\Gamma$-cone, which we now introduce.}
  \begin{definition} \label{defi.Gammacone}
   Let $K\subseteq\RN$ be any set. We say that
   $F$ is \emph{$\Gamma$-cone} if it contains
   a countable family $\mathcal{F} = \{\Omega(z_j,R_j\}_{j\in J}$ of
   $\Gamma$-balls such that \medskip
   
   (i)\,\,$\|z_j\|\to\infty$ as $j\to\infty$; \medskip
   
   (ii)\,\,$\displaystyle\liminf_{j\to\infty}\frac{R_j}{\gamma(0,z_j)} > 0$.
  \end{definition}
  Then, the following theorem holds true.
  \begin{theorem} \label{thm.GammaconeIntro}
    Let $F\subseteq\RN$ and let us assume that there exists a $\Gamma$-cone
    $K\subseteq F$. Then, it is possible to find a real $p > 1$ such that
    $F$ is $p_\LL$-unbounded.
  \end{theorem}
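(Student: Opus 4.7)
The plan is to proceed by contradiction, exploiting the doubling/reverse-doubling assumption (D) on $\Gamma$-balls to produce a suitable exponent $p>1$ from the $\Gamma$-cone structure. Starting from the family $\{\Omega(z_j,R_j)\}_{j\in J}$ witnessing the $\Gamma$-cone $K\subseteq F$, I will show that any countable cover $\{\Omega(x_n,r_n)\}$ of $F$ forces $\sum_n (r_n/\gamma(0,x_n))^p=\infty$, where $p$ is chosen equal to the reverse-doubling exponent supplied by (D).

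\emph{Reductions.} First, by passing to a subsequence of the cone (and, if necessary, shrinking each $R_j$ to a uniform small constant multiple of $\gamma(0,z_j)$---which only weakens the hypothesis), I may assume that $\gamma(0,z_j)$ grows geometrically, that $R_j=c\,\gamma(0,z_j)$ with $c>0$ small and uniform, and that the inflated balls $\Omega(z_j,MR_j)$ are pairwise disjoint for a constant $M>1$ chosen in terms of the pseudo-metric constant of $\gamma$. Then, supposing for contradiction that a cover $\{\Omega(x_n,r_n)\}$ of $F$ realizes $S:=\sum_n(r_n/\gamma(0,x_n))^p<\infty$, I set $S_j:=\{n:\Omega(x_n,r_n)\cap\Omega(z_j,R_j)\neq\emptyset\}$ for each $j$.

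\emph{Core per-cone-ball estimate.} The central step is to establish the existence of $\kappa_0>0$, independent of $j$, such that $\sum_{n\in S_j}(r_n/\gamma(0,x_n))^p\geq \kappa_0$. I will split $S_j$ according to whether $r_n\leq R_j$ (small balls) or $r_n>R_j$ (large balls). In the small regime, the pseudo-triangle inequality for $\gamma$ gives $\gamma(0,x_n)\asymp \gamma(0,z_j)\asymp R_j/c$; starting from the covering inequality for the measures of the sets $\Omega(x_n,r_n)\cap\Omega(z_j,R_j)$ and invoking (D) to compare $\Gamma$-ball measures at different scales, one obtains $\sum (r_n/R_j)^p\gtrsim 1$. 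In the large regime, the pseudo-triangle inequality yields $\gamma(0,x_n)\lesssim r_n$, so each individual term $(r_n/\gamma(0,x_n))^p$ is bounded below by a positive constant.

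\emph{Multiplicity and conclusion.} The remaining and most delicate step---what I expect to be the main obstacle---is the finite-multiplicity control: each index $n$ in the cover should belong to at most $N_0$ of the sets $S_j$, for a constant $N_0$ independent of $n$. This will rely on the pairwise disjointness of $\Omega(z_j,MR_j)$ and a packing estimate via (D): for fixed $n$, the candidate indices $z_j$ lie in a controlled $\gamma$-neighborhood of $x_n$, and the disjoint inflated balls $\Omega(z_j,MR_j)$ must pack into a larger $\Gamma$-ball whose measure is bounded by doubling. The regime $r_n\gg R_j$ is the subtle case, where $M$ and the geometric growth rate of $\gamma(0,z_j)$ must be balanced against the doubling constants. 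Once this multiplicity bound is in place, combining it with the per-cone-ball estimate yields
\[
\infty=\sum_j \kappa_0 \leq \sum_j\sum_{n\in S_j}\left(\frac{r_n}{\gamma(0,x_n)}\right)^p \leq N_0\,S<\infty,
\]
a contradiction showing that no such cover can exist, hence that $F$ is $p_\LL$-unbounded.
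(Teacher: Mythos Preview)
Your overall architecture matches the paper's proof of the underlying Theorem (the paper reduces Theorem~\ref{thm.GammaconeIntro} to showing that a $\Gamma$-cone itself is $p_\LL$-unbounded, and then argues exactly as you outline: extract a geometrically growing subsequence of cone balls $B_j=\Omega(y_j,\rho_j)$ with $\rho_j=\delta\,\gamma_0(y_j)$, take a putative cover $\{D_n=\Omega(x_n,r_n)\}$ with finite $p$-sum, set $P_j=\{n:D_n\cap B_j\neq\varnothing\}$, prove a uniform per-$j$ lower bound via the reverse-doubling part of (D), and sum over $j$). Your ``core per-cone-ball estimate'' is precisely the paper's Claim~III.

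The gap is exactly where you suspected, and it is real: a \emph{uniform} multiplicity bound $N_0$ independent of $n$ is false in the large regime, and the packing argument you propose cannot deliver it. For a fixed $n$ with $r_n$ large, the indices $j$ for which $n\in S_j$ and $r_n>R_j$ are those with $\gamma_0(z_j)<r_n/c$ and $z_j$ within $\gamma$-distance $\lesssim r_n$ of $x_n$; since $\gamma_0(z_j)$ grows geometrically this set has cardinality $\sim\log r_n$, and the disjoint balls $\Omega(z_j,MR_j)$ you want to pack have radii spread across all these scales, so a volume comparison via (D) gives no uniform count. No choice of $M$ or of the growth rate repairs this.

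The paper avoids the issue by disposing of the large balls \emph{before} attempting any overlap control. It fixes a small $\varepsilon>0$ and sets $\mathcal{A}_\varepsilon=\{n:r_n/\gamma_0(x_n)\geq\varepsilon\}$; if $\mathcal{A}_\varepsilon$ is infinite the $p$-sum diverges trivially, otherwise one discards these finitely many $n$ (and the finitely many cone balls they reach). For the surviving indices every $n\in P_j$ now satisfies $r_n<\varepsilon\,\gamma_0(x_n)$, and the pseudo-triangle inequality then forces $M^{-1}\leq\gamma_0(x_n)/\gamma_0(y_j)\leq M$. Because the $\gamma_0(y_j)$ were chosen to grow by a factor exceeding $M^2$, the $P_j$ are pairwise \emph{disjoint}, not merely of bounded overlap, and the final summation goes through with multiplicity exactly one. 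Your argument becomes correct once you insert this preliminary ``either $\mathcal{A}_\varepsilon$ is infinite, or discard it'' step in place of the packing claim.
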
  
  Gathering together Theorems \ref{thm.LLthinMPset}, 
  \ref{thm.mainpsetthin} and \ref{thm.GammaconeIntro}
  we obtain the following result, in which all
  the hypotheses (H1)-to-(H3), (FS), (G), (L) and (D) are assumed.
  \begin{theorem}  \label{thm.mainSummarizeIntro}
   The open set $\Omega\subseteq\RN$ is a maximum principle 
   set for $\LL$ if one
   of the following (sufficient) conditions is satisfied:
   \begin{itemize}
    \item[\emph{(i)}] $\RN\setminus\Omega$ is $\LL$-large at infinity 
    \emph{(}this condition is also necessary\emph{)};
    \item[\emph{(ii)}] $\RN\setminus\Omega$ is 
    $p_\LL$-unbounded \emph{(}for a suitable $p > 1$\emph{)};
    \item[\emph{(iii)}] $\RN\setminus\Omega$ contains a $\Gamma$-cone.
   \end{itemize}
  \end{theorem}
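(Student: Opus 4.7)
The plan is to observe that Theorem \ref{thm.mainSummarizeIntro} is a direct corollary of the three previously stated results---Theorems \ref{thm.LLthinMPset}, \ref{thm.mainpsetthin} and \ref{thm.GammaconeIntro}---so no genuinely new argument should be required; the role of this statement is merely to package the logical dependencies into a single comprehensive criterion. Throughout, I will let $F := \RN \setminus \Omega$ play the role of the common set to which the various notions of largeness or $p_\LL$-unboundedness are applied.

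First I would dispatch part (i). By Theorem \ref{thm.LLthinMPset}, an open set $\Omega\subseteq\RN$ is a maximum principle set for $\LL$ \emph{if and only if} $F$ is $\LL$-large at infinity. This yields both the sufficiency stated in (i) and the necessity noted parenthetically, and it provides the anchor to which the remaining two conditions will be reduced.

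Next I would handle (ii). Applying Theorem \ref{thm.mainpsetthin} to the set $F$, the assumption that $F$ is $p_\LL$-unbounded for some $p\in(1,\infty)$ gives at once that $F$ is $\LL$-large at infinity; hence (ii) reduces to (i), and the conclusion follows from Theorem \ref{thm.LLthinMPset} applied in the forward direction. For (iii), I would then invoke Theorem \ref{thm.GammaconeIntro}: if $F$ contains a $\Gamma$-cone, there exists some $p>1$ for which $F$ is $p_\LL$-unbounded, and composing this with the implication just established in (ii) produces the chain
\[
\text{(iii)} \;\Longrightarrow\; \text{(ii)} \;\Longrightarrow\; \text{(i)} \;\Longleftrightarrow\; \text{$\Omega$ is an MP set for $\LL$},
\]
which is precisely the statement of the theorem.

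The main obstacle here is essentially bookkeeping rather than mathematical: since the whole content is contained in the three cited theorems, one only has to make explicit the set on which each is applied (namely $F = \RN\setminus\Omega$) and to record that the cumulative hypotheses (H1)--(H3), (FS), (G), (L) and (D) are in force as stated in the premise, so that the three results can be chained without any additional verification.
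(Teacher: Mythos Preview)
Your proposal is correct and follows essentially the same approach as the paper: the paper's proof also reduces (i) to Theorem \ref{thm.LLthinMPset}, then derives (ii) from Theorem \ref{thm.mainpsetthin} combined with (i), and finally obtains (iii) from Theorem \ref{thm.GammaconeIntro} combined with (ii). The only cosmetic difference is that you introduce the shorthand $F=\RN\setminus\Omega$, which the paper does not bother to do.
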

  \begin{proof}
   (i)\,\,This is precisely the statement of Theorem \ref{thm.LLthinMPset}. \medskip
   
   (ii)\,\,If $\RN\setminus\Omega$ is $p_\LL$-unbounded (for some $p > 1$),
   we know from Theorem \ref{thm.mainpsetthin} that $\RN\setminus\Omega$
   is $\LL$-large at infinity; thus, by 
   (i), $\Omega$ is a MP set for $\LL$. \medskip
   
   (iii)\,\,If $\RN\setminus\Omega$ contains a $\Gamma$-cone $K$, we know from
   Theorem \ref{thm.GammaconeIntro} that there exists a real $p > 1$ such that
   $\RN\setminus\Omega$ is $p_\LL$-unbounded; thus, by (ii),
   we conclude that $\Omega$ is a maximum principle set for $\LL$.
   This ends the proof.
  \end{proof}
   As a consequence of Theorem \ref{thm.mainSummarizeIntro}
   we easily obtain the following result.
  \begin{corollary} \label{cor.WMPIntro}
   Let $\Omega\subseteq\RN$ be an open set
   satisfying one of conditions \emph{(i)-to-(iii)} in
   Theorem \ref{thm.mainSummarizeIntro}. Moreover,
   let
   $f:\Omega\times\R\to\R$ be such that 
   \begin{equation} \label{eq.assumptionf}
    f(x,z)\leq 0 \qquad \text{for every $x\in\Omega$ and $z\geq 0$.}
   \end{equation}
   If $u\in C^2(\Omega,\R)$ is bounded above and satisfies
   \begin{equation} \label{eq.systemLLucu}
    \begin{cases}
     \LL u + f(x,u) \geq 0 & \text{in $\Omega$}, \\
     \dsy\limsup_{x\to y}u(x)\leq 0 & \text{for every $y\in\de\Omega$},
	\end{cases}
   \end{equation}
   then $u\leq 0$ throughout $\Omega$.
  \end{corollary}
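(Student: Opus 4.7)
The plan is to reduce Corollary \ref{cor.WMPIntro} to the MP-set property already granted by Theorem \ref{thm.mainSummarizeIntro}, by applying that property not to $u$ itself (which is not a priori $\LL$-subharmonic on $\Omega$), but to its positive part $w := \max(u,0)$. Once I show that $w \in \subHb(\Omega)$ with $\limsup_{x\to\xi} w(x) \leq 0$ for every $\xi \in \de\Omega$, the MP-set property forces $w \leq 0$ on $\Omega$, and since $w \geq 0$ by construction this yields $u \leq 0$ on $\Omega$.

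The main step is therefore to verify $w \in \subHb(\Omega)$. Boundedness above and upper semicontinuity of $w$ are immediate from the corresponding properties of $u$. For the defining comparison property, I would fix a bounded open $V$ with $\overline V \subseteq \Omega$ and any $\LL$-harmonic $h$ on $V$, continuous on $\overline V$, with $w \leq h$ on $\de V$, and argue $w \leq h$ on $V$ as follows. First, $h \geq 0$ on $\de V$ since $w \geq 0$, and then the bounded MP \eqref{eq.WMPintro} applied to $-h \in \subH(V)$ gives $h \geq 0$ on $V$. Next, consider the bounded open set $V_+ := V \cap \{u > 0\}$: on $V_+$ the sign condition \eqref{eq.assumptionf} yields
\[
\LL u(x) \;\geq\; -f(x,u(x)) \;\geq\; 0,
\]
so by the $C^2$-criterion recalled after \eqref{eq.WMPintro} one has $u \in \subH(V_+)$. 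The boundary $\de V_+$ is contained in $(\de V \cap \overline{V_+}) \cup (V \cap \{u=0\})$: on the first piece $u \leq w \leq h$, while on the second $u = 0 \leq h$. Applying \eqref{eq.WMPintro} on $V_+$ to $u - h \in \subH(V_+)$ then gives $u \leq h$ on $V_+$. On $V \setminus V_+$ (where $u \leq 0$) we have $w = 0 \leq h$ trivially, so $w \leq h$ on all of $V$. This proves $w \in \subHb(\Omega)$, and the boundary condition $\limsup_{x\to\xi} w(x) \leq 0$ on $\de\Omega$ is inherited from the one on $u$, so the argument closes.

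The main obstacle is precisely this subharmonicity step: the sign assumption on $f$ only delivers $\LL u \geq 0$ where $u > 0$, so $u$ itself need not be $\LL$-subharmonic on $\Omega$, and the MP-set property cannot be invoked directly. Passing to $w = u^+$ is the natural fix, but one has to glue subharmonicity across the nodal set $\{u = 0\}$; the clean way to do this is the localization to $V_+$ combined with the a priori nonnegativity of the comparison harmonic function $h$, a nonnegativity which itself follows from $w \geq 0$ via the bounded MP.
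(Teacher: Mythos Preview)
Your proof is correct and follows essentially the same route as the paper: both arguments reduce to showing that $w=\max\{u,0\}$ is $\LL$-subharmonic and bounded above on $\Omega$, with nonpositive boundary $\limsup$, and then invoke the MP-set property granted by Theorem~\ref{thm.mainSummarizeIntro}. The only difference is in how the subharmonicity of $w$ is established: the paper observes that $u\in\subH(\Omega^+)$ on $\Omega^+=\{u>0\}$ and then appeals to (the argument of) Lemma~\ref{lem.gluingsubHb}, whose proof in the Appendix uses the local mean-value characterization of subharmonicity; you instead verify the defining comparison property directly, localizing to $V_+=V\cap\{u>0\}$ and using the bounded maximum principle \eqref{eq.WMPintro} twice (once to get $h\geq 0$, once for $u-h$ on $V_+$). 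Your argument is a bit more self-contained, while the paper's is shorter since it packages the gluing step into a reusable lemma.
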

  \begin{proof}
  We argue by contradiction and we assume the existence
  of some point $x_0\in\Omega$ such that $u(x_0) > 0$.
  We then consider the following set
  \begin{equation} \label{eq.Omegaplus}
   \Omega^+ := \{x\in\Omega:u(x) > 0\} \neq \varnothing.
   \end{equation}
  By combining \eqref{eq.assumptionf}
  with \eqref{eq.systemLLucu} we infer that, on $\Omega^+$, we have
  $\LL u \geq  -f(x, u) \geq 0$; as a con\-se\-quence,
  $u\in\subH(\Omega^+)$. On the other hand, by the boundary
  condition in \eqref{eq.systemLLucu} and the fact that
  $u \equiv 0$ on $\de\Omega^+\cap\Omega$, it is readily seen that
  $$\limsup_{x\to y}u(x)\leq 0 \quad \text{for every $y\in\de\Omega^+$}.$$
  From this, by arguing exactly as in Lemma \ref{lem.gluingsubHb}, 
  we infer that the function 
  $$v:\Omega\to\R, \qquad v(x)=\max\{u(x),0\}$$ is $\LL$-subharmonic
  in $\Omega$; furthermore, since $u$ is bounded above
  in $\Omega$, the same is true of $v$. 
  Taking into account that, by assumption, $\Omega$
  is a MP-set for $\LL$, we conclude that $v\leq 0$, whence
  $u\leq 0$, but this is in contradiction with
  \eqref{eq.Omegaplus}.
  \end{proof}

  \medskip
  \noindent Finally, in Section \ref{sec.homoperator} we shall prove that the H\"ormander's
  operators \emph{sums of squares of homogeneous vector fields} satisfy all the hypotheses
  of the above Theorem \ref{thm.mainSummarizeIntro}.
  These operators, precisely, are defined as follows. \vspace*{0.1cm}
  
  Let $\mathcal{X} = \{X_1,\ldots,X_m\}$ be a family of linearly 
  independent smooth vector fields
  on Euclidean space $\RN$, with $N\geq 3$, satisfying the following properties:
  \begin{itemize}
   \item[(I)] $X_1,\ldots,X_m$ are $\dela$-homogeneous of degree $1$ with respect to
   a family of non-isotropic dilations $\{\dela\}_{\lambda > 0}$ of the following type
   $$\dela:\RN\to\RN, \qquad \dela(x) = \big(\lambda^{\sigma_1}x_1,\ldots,
   \lambda^{\sigma_N} x_N\big),$$
   where $1 = \sigma_1\leq \ldots\leq \sigma_N$ are positive integers;
   \item[(II)] $X_1,\ldots,X_m$ satisfy the H\"ormander rank condition, i.e.,
   $$\mathrm{dim}\bigg\{X(x): X\in \mathrm{Lie}\big\{X_1,\ldots,X_m\}\bigg\} = N \quad
   \text{for every $x\in\RN$}.$$
  \end{itemize}
  Then, the second-order linear operator $\LL$ defined by
  $$\LL := \sum_{j = 1}^m X_j^2,$$
  will be called a \emph{homogeneous H\"ormander operator}.  \medskip
  
  \noindent We want
  to point out that the class of the homogeneous H\"ormander
  operators contains, as very particular examples,
  the sub-Laplace operators on stratified
  Lie groups and the so-called
  Grushin-type operators on $\RN$ (with $N\geq 3$), together with
  their generalizations: the $\Delta_\lambda$-Laplacians
  (for $\lambda$ smooth) introduced in \cite{KogojLanconelli}.
  
  When $\LL$ is a homogeneous H\"ormander operator, our geometrical
  criteria for $\LL$-largeness at infinity/$p_\LL$-unboundedness 
  take a more explicit form.
  While we directly
  refer to Section \ref{sec.homoperator} for the statement
  and the proof of such ad-hoc criteria, here we only want to present
  the ``homogeneous'' version of the \emph{cone criterion.} \medskip
  
  To this end, it is convenient to fix a definition.
  \begin{definition} \label{defi.delacone}
   Let $C\subseteq\RN$ be any set. We say that
   $C$ is a \emph{non-degenerate $\dela$-cone}
   if it satisfies the following properties:
   \begin{itemize}
    \item[(i)] $\mathrm{int}(F) \neq \varnothing$;
    \item[(ii)] there exists $\lambda_0 > 0$ such that
    $\dela(C)\subseteq C$ for every $\lambda\geq \lambda_0$.
   \end{itemize}
   Here, $\{\delta_\lambda\}_{\lambda > 0}$ denotes the family
   of (non-isotropic) dilations associated with
   the vector fields $X_1,\ldots,X_m$ and appearing in the above 
   assumption
   (I).
  \end{definition}
  Then we have the following
  result, which will be proved is Section \ref{sec.homoperator}.
  \begin{proposition} \label{prop.delaconeIntro}
   If $F\subseteq\RN$ contains a non-degenerate
   $\dela$-cone, then there exists
   $p > 1$ such that $F$ is $p_\LL$-unbounded.
   \emph{(}in the sense of Definition \ref{def.pset}\emph{)}.
  \end{proposition}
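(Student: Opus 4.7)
The plan is to deduce this proposition from Theorem \ref{thm.GammaconeIntro}: I will show that any non-degenerate $\dela$-cone $C\subseteq F$ contains a $\Gamma$-cone in the sense of Definition \ref{defi.Gammacone}. The key input is the $\dela$-homogeneity of the fundamental solution of a homogeneous H\"ormander operator. Since $\LL = \sum_{j=1}^m X_j^2$ with $X_j$ being $\dela$-homogeneous of degree $1$, the operator $\LL$ is $\dela$-homogeneous of degree $2$. By the standard uniqueness argument for the fundamental solution with the correct pole behaviour, this forces $\Gamma$ to be jointly $\dela$-homogeneous of degree $-(Q-2)$, where $Q := \sigma_1 + \cdots + \sigma_N$ is the homogeneous dimension; equivalently, the pseudo-metric $\gamma = 1/\Gamma$ satisfies $\gamma(\dela(x),\dela(y)) = \lambda\,\gamma(x,y)$, and the $\Gamma$-balls transform covariantly:
\[
 \dela\big(\Omega(x,r)\big) \;=\; \Omega\big(\dela(x),\,\lambda r\big) \qquad \forall\,x\in\RN,\;r>0,\;\lambda>0.
\]
These identities are precisely the technical preliminaries of Section \ref{sec.homoperator} and I shall simply invoke them.

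Using property (i) of Definition \ref{defi.delacone} and the fact that the family of $\Gamma$-balls is a basis for the Euclidean topology on $\RN$, I fix a point $x_0 \in \mathrm{int}(C)\setminus\{0\}$ and a radius $r_0 > 0$ so small that $\Omega(x_0, r_0) \subseteq \mathrm{int}(C)\subseteq C$. I then choose an unbounded sequence $\lambda_j \to \infty$, say $\lambda_j := 2^j\max\{\lambda_0,1\}$, so that $\lambda_j \geq \lambda_0$ for every $j$, and set
\[
 z_j := \delta_{\lambda_j}(x_0), \qquad R_j := \lambda_j\, r_0.
\]
By property (ii) of the $\dela$-cone and the covariance of the $\Gamma$-balls,
\[
 \Omega(z_j, R_j) \;=\; \delta_{\lambda_j}\big(\Omega(x_0,r_0)\big) \;\subseteq\; \delta_{\lambda_j}(C) \;\subseteq\; C \;\subseteq\; F.
\]
Since $x_0\neq 0$ and the $\dela$ have at least one strictly positive exponent $\sigma_k$, one has $\|z_j\|\to\infty$, while the homogeneity of $\gamma$ gives
\[
 \frac{R_j}{\gamma(0,z_j)} \;=\; \frac{\lambda_j\, r_0}{\lambda_j\,\gamma(0,x_0)} \;=\; \frac{r_0}{\gamma(0,x_0)} \;>\; 0
\]
for every $j$. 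Thus the countable family $\{\Omega(z_j,R_j)\}_{j\in\N}$ satisfies the two requirements of Definition \ref{defi.Gammacone}, so $F$ contains a $\Gamma$-cone, and Theorem \ref{thm.GammaconeIntro} produces the desired $p > 1$ for which $F$ is $p_\LL$-unbounded.

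The main (and essentially only) non-routine ingredient is the $\dela$-homogeneity of $\Gamma$ and the resulting covariance of the $\Gamma$-balls under the dilations; once these are granted, the argument reduces to a transparent geometric rescaling. This homogeneity is classical for sub-Laplacians on stratified Lie groups, and its extension to general homogeneous H\"ormander operators (together with the verification of the structural hypotheses (FS), (G), (L) and (D) needed by Theorem \ref{thm.GammaconeIntro}) is carried out in Section \ref{sec.homoperator}, so I would simply quote those results.
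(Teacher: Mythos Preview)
Your overall strategy is exactly the one used in the paper: show that a non-degenerate $\dela$-cone is a $\Gamma$-cone (this is the paper's Proposition \ref{prop.deltaconeisGammacone}) and then invoke Theorem \ref{thm.GammaconeIntro}. The construction you give---pick a $\Gamma$-ball $\Omega(x_0,r_0)$ inside $C$ and push it out by the dilations $\delta_{\lambda_j}$---is precisely the paper's construction.

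There is, however, a genuine slip in the homogeneity exponent that breaks your argument when $Q>3$. From $\Gamma(\dela(x);\dela(y))=\lambda^{2-Q}\Gamma(x;y)$ (which you state correctly) one gets
\[
\gamma(\dela(x),\dela(y))=\lambda^{\,Q-2}\,\gamma(x,y),
\qquad
\dela\big(\Omega(x,r)\big)=\Omega\big(\dela(x),\lambda^{\,Q-2}r\big),
\]
not $\lambda\,\gamma(x,y)$ and $\Omega(\dela(x),\lambda r)$ as you wrote. With your choice $R_j=\lambda_j r_0$ the identity $\Omega(z_j,R_j)=\delta_{\lambda_j}(\Omega(x_0,r_0))$ fails, and more importantly
\[
\frac{R_j}{\gamma(0,z_j)}=\frac{\lambda_j r_0}{\lambda_j^{\,Q-2}\gamma(0,x_0)}
=\frac{r_0}{\lambda_j^{\,Q-3}\gamma(0,x_0)}\longrightarrow 0
\quad\text{if }Q>3,
\]
so condition (ii) of Definition \ref{defi.Gammacone} is not verified. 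The fix is immediate: set $R_j:=\lambda_j^{\,Q-2}r_0$ (as the paper does), and then $R_j/\gamma(0,z_j)=r_0/\gamma(0,x_0)>0$ for every $j$. With this correction your proof coincides with the paper's.
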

  It can be easily proved that
  every half-space of $\RN$ contains a non-degenerate $\dela$-cone
  (see again Section \ref{sec.homoperator});
  as a consequence, by combining Proposition \ref{prop.delaconeIntro} 
  with Theorem \ref{thm.mainSummarizeIntro}, we readily obtain
  the subsequent result.
  \begin{theorem} \label{thm.delaconeIntro}
   Let $\LL$ be a homogeneous
   H\"ormander operator in $\RN$ \emph{(}with $N\geq 3$\emph{)} and let
   $\Omega\subseteq\RN$ be an open
   set satisfying one of the following conditions:
   \begin{itemize}
    \item[\emph{(i)}] $\RN\setminus\Omega$ contains a non-degenerate $\dela$-cone;
    \item[\emph{(ii)}] $\Omega$ is contained in a half-space
    \footnote{Note that this is equivalent to say that $\RN\setminus\Omega$
    \emph{contains} a half-space.} of $\RN$.
   \end{itemize}
   Then $\Omega$ is a maximum principle for $\LL$.
  \end{theorem}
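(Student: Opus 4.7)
The plan is to reduce the statement to Theorem~\ref{thm.mainSummarizeIntro} through Proposition~\ref{prop.delaconeIntro}. The only real content not already contained in those results is the footnoted claim that every half-space of $\RN$ contains a non-degenerate $\dela$-cone; once this is verified, case~(ii) becomes a special instance of case~(i).

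Case~(i) is immediate: if $\RN\setminus\Omega$ contains a non-degenerate $\dela$-cone, Proposition~\ref{prop.delaconeIntro} yields some $p>1$ for which $\RN\setminus\Omega$ is $p_\LL$-unbounded, and Theorem~\ref{thm.mainSummarizeIntro}\emph{(ii)} then gives that $\Omega$ is a MP set for $\LL$. For case~(ii), if $\Omega$ sits in a half-space then $\RN\setminus\Omega$ contains an open half-space $H=\{x\in\RN:\lan a,x\ran>c\}$ with $a\in\RN\setminus\{0\}$ and $c\in\R$; it therefore suffices to produce a non-degenerate $\dela$-cone inside $H$.

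To this end, set $k^*:=\max\{\sigma_i:a_i\neq 0\}$ and let $\bar x\in\RN$ have coordinates $\bar x_i=a_i$ when $\sigma_i=k^*$ and $\bar x_i=0$ otherwise. Then
$$\lan a,\dela(y)\ran=\sum_{k=1}^{k^*}\lambda^k\,c_k(y),\qquad c_k(y):=\sum_{\{i:\sigma_i=k\}}a_iy_i,$$
since the terms with $\sigma_i>k^*$ vanish by the very choice of $k^*$. In particular $c_{k^*}(\bar x)=\sum_{\{i:\sigma_i=k^*\}}a_i^2>0$, so by continuity $c_{k^*}$ stays bounded below by a positive constant on a small open ball $U_0$ around $\bar x$. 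The leading term of the polynomial $\lambda\mapsto\lan a,\dela(y)\ran$ therefore dominates uniformly in $y\in U_0$, and one can fix $\lambda_0\geq 1$ with $\lan a,\dela(y)\ran>c$ for every $\lambda\geq\lambda_0$ and every $y\in U_0$. Setting
$$K:=\bigcup_{\lambda\geq\lambda_0}\dela(U_0),$$
the set $K$ is open (hence has non-empty interior), lies in $H$ by the previous estimate, and satisfies $\delta_\mu(K)\subseteq K$ for every $\mu\geq 1$, because $\delta_\mu(\dela(y))=\delta_{\mu\lambda}(y)$ with $\mu\lambda\geq\lambda_0$. Thus $K$ is a non-degenerate $\dela$-cone contained in $\RN\setminus\Omega$, and case~(i) applies.

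The only real obstacle is the non-isotropic nature of $\{\dela\}_{\lambda>0}$: an ordinary Euclidean translate of a cone is typically \emph{not} $\dela$-invariant, so a Euclidean cone inside $H$ does not do the job. The fix is instead to build the cone as the $\dela$-orbit of a small open set centered at a base point $\bar x$ whose coordinates are supported on the indices of maximal weight entering the linear form $\lan a,\cdot\ran$; this choice makes the leading coefficient of $\lambda\mapsto\lan a,\dela(\bar x)\ran$ strictly positive, and the passage to a uniform estimate over $U_0$ is then a routine polynomial argument.
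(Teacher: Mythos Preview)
Your proof is correct and follows the same overall strategy as the paper: case~(i) is handled via Proposition~\ref{prop.delaconeIntro} and Theorem~\ref{thm.mainSummarizeIntro}\,(ii), and case~(ii) is reduced to case~(i) by exhibiting a non-degenerate $\dela$-cone inside any half-space. The only difference is in how that cone is built. The paper (Remark~\ref{rem.halfspaceCone}) uses the static set $C=\{x\in\Pi:x_iv_i\geq 0\text{ for all }i\}$, whose $\dela$-invariance for $\lambda\geq 1$ is immediate since each summand $\lambda^{\sigma_i}x_iv_i$ is nonnegative and nondecreasing in $\lambda$, so $\lan \dela(x),v\ran\geq\lan x,v\ran\geq h$. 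Your orbit construction $K=\bigcup_{\lambda\geq\lambda_0}\dela(U_0)$ is equally valid but trades that one-line monotonicity argument for a leading-term polynomial estimate needed to locate the seed set $U_0$; the payoff is that your $K$ is automatically open, whereas the paper must separately note that $\mathrm{int}(C)\neq\varnothing$.
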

  \begin{proof}
   (i)\,\,If $\RN\setminus\Omega$ contains a non-degenerate $\dela$-cone, it follows
   from Proposition \ref{prop.delaconeIntro} that $\RN\setminus\Omega$
   is
   $p_\LL$-unbounded (for some $p > 1$); as a consequence,
   Theorem \ref{thm.mainSummarizeIntro}-(ii) allows us to conclude
   that $\Omega$ is a maximum principle set for $\LL$. \medskip
   
   (ii)\,\,If $\Omega$ is contained in a half-space $H$, then
   $\RN\setminus\Omega$ contains the half-space
   $H' = \RN\setminus H$; since $H'$
   contains a non-degenerate $\dela$-cone
   (see Remark \ref{rem.halfspaceCone}), we conclude from (i) that
   $\Omega$ is a maximum principle set for $\LL$.
  \end{proof}
  {We point out that, in order to prove that
  any homogeneous H\"ormander operator $\LL = \sum_{j = 1}^mX_j^2$
  satisfies all the hypotheses
  (H1)-to-(H3), (FS), (G), (L) and (D), we make
  crucial use of \emph{global estimates} for two objects
  associated with $\LL$: its global fundamental
  solution (see Theorem \ref{thm.globalSanchez})
  and the measure of the balls in the Carnot-Carath\'{e}odory metric associated with
  $X_1,\ldots,X_m$ (see Theorem \ref{main.ThmNSW}).} \medskip
  
  When $\LL$ is the classical Laplacian or the sub-Laplacian on a stratified
  Lie group, the maximum principle in Corollary 
  \ref{cor.WMPIntro} was proved in \cite{BCN} and in \cite{BonfiLancoMP}, respectively.
  This last paper contains a version of Theorems \ref{thm.LLthinMPset},
  \ref{thm.mainpsetthin}, \ref{lem.mainpsetLLthin}
  and \ref{thm.delaconeIntro}
  for the sub-Laplacians setting.
  We point out that Theorem \ref{lem.mainpsetLLthin}, in the case
  of the classical Laplace operator $\Delta$, is a somehow weaker form
  of a Deny's theorem for $\Delta$-sub\-har\-monic functions (see
  Theorem 3.1 in the monograph
  \cite{HK}).  \bigskip

  \noindent 
  A short description of the contents of our paper
  is now in order.
  \begin{itemize}
   \item In Section \ref{sec:thinsets} we study the relationship
  between the notion of maximum principle set
  for $\LL$ (see Definition \ref{def:MPsetL})
  and the one of $\LL$-thinness (and $\LL$-largeness) at infinity
  (see Definition \ref{def.LLthinness}).
  
  \item In Section \ref{sec:LLthinpset} we make use
  of the notion of $p_\LL$-unboundedness (see Definition \ref{def.pset})
  to give a geometrical sufficient condition 
  for a set to be $\LL$-large at infinity.
  
  \item In Section \ref{sec.suffcondCone}, by means of the notion
  of $\Gamma$-cone (see Definition \ref{defi.Gammacone}, we prove
  that a set is $\LL$-large at infinity if it contains a $\Gamma$-cone.
  
  \item In Section \ref{sec.homoperator} we prove that
  our theory apply to every homogeneous H\"or\-man\-der operator; to this end,
  we show and use some estimates of the fun\-da\-men\-tal solution
  of these operators which are of independent interest.
  
  \item Finally, in the Appendix we remind some basic results
  coming from abstract Potential Theory needed for our study.
  \end{itemize}
 \section{$\LL$-thin sets and Maximum Principle} \label{sec:thinsets}
  {The main aim of this section is to 
  prove Theorem \ref{thm.LLthinMPset} stated in the Introduction.
  To this end, we need to demonstrate a couple
  of preliminary results.} 
  \begin{lemma} \label{lem.gluingsubHb}
   Let $\Omega\subseteq\RN$ be open and let
   $u\in\subH(\Omega)$ be such that
   \begin{equation} \label{eq.conditionlemma}
    \limsup_{x\to \xi}u(x) \leq 0 \quad \text{for every $\xi\in\de\Omega$}.
   \end{equation}
   Then the function $v:\RN\to[-\infty,\infty)$ defined by
   $$v(x) = \begin{cases}
    \max\{u(x),0\}, & \text{if $x\in\Omega$}, \\[0.1cm]
    0, & \text{if $x\in\RN\setminus\Omega$},
   \end{cases}
   $$
   is $\LL$-subharmonic in $\RN$.
  \end{lemma}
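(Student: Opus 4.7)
My plan is to check the three defining conditions for $\LL$-subharmonicity of $v$ on $\RN$: (i) density of $\{v > -\infty\}$, which is trivial since $v \geq 0$ everywhere; (ii) upper semi-continuity; and (iii) the comparison inequality against $\LL$-harmonic barriers on bounded test sets. For (ii), $v = \max\{u,0\}$ is u.s.c.\ on $\Omega$ as the maximum of two u.s.c.\ functions, and $v\equiv 0$ is continuous on $\mathrm{int}(\RN\setminus\Omega)$; at $\xi\in\de\Omega$ one has $v(\xi)=0$, and combining \eqref{eq.conditionlemma} with the fact that $v$ vanishes outside $\Omega$ gives $\limsup_{x\to\xi} v(x)\leq 0 = v(\xi)$.

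For (iii), fix a bounded open $V$ with $\overline{V}\subseteq\RN$ and an $\LL$-harmonic $h$ on $V$, continuous up to $\de V$, satisfying $v\leq h$ on $\de V$. The preliminary step is to show $h\geq 0$ on the whole of $V$: since $v\geq 0$ we have $h\geq 0$ on $\de V$, and the smooth function $-h$ is $\LL$-subharmonic on $V$ (as $\LL(-h)=0$), so the bounded Maximum Principle \eqref{eq.WMPintro} applied to $-h$ yields $h\geq 0$ on $V$. With this in hand, I apply \eqref{eq.WMPintro} to the $\LL$-subharmonic function $u-h$ on the bounded open set $V\cap\Omega$, verifying $\limsup_{x\to\xi,\,x\in V\cap\Omega}(u-h)(x)\leq 0$ for every $\xi\in\de(V\cap\Omega)$ via a case split: at $\xi\in\de V\cap\Omega$, upper semi-continuity of $u$ and $v|_{\de V}\leq h|_{\de V}$ give $\limsup u \leq u(\xi)\leq v(\xi)\leq h(\xi)$; at $\xi\in\de V\cap\de\Omega$, hypothesis \eqref{eq.conditionlemma} gives $\limsup u \leq 0\leq h(\xi)$; and at $\xi\in V\cap\de\Omega$, \eqref{eq.conditionlemma} together with the just-established positivity of $h$ on $V$ gives $\limsup u \leq 0\leq h(\xi)$.

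This yields $u\leq h$ on $V\cap\Omega$, so that $v=\max\{u,0\}\leq h$ on $V\cap\Omega$ (using $u\leq h$ and $0\leq h$), while on $V\setminus\Omega$ we have $v\equiv 0\leq h$; hence $v\leq h$ on $V$, as required. The only delicacy, rather than a genuine obstacle, is the case analysis at $\de(V\cap\Omega)$, whose points may lie in $\de V$, in the interior portion $V\cap\de\Omega$, or in both, and where $u$ is undefined outside $\Omega$ so that one must consistently work with $\limsup$-values rather than pointwise values; establishing $h\geq 0$ on $V$ first is precisely what makes the three sub-cases uniform.
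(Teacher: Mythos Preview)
Your proof is correct, but it follows a different route from the paper's. The paper invokes the local mean-value characterization of $\LL$-subharmonicity (Proposition~\ref{prop.localcritsuph}): for $x_0\in\Omega$ one chooses as basis the $\LL$-regular neighborhoods $V$ with $\overline{V}\subseteq\Omega$ and uses that $\max\{u,0\}\in\subH(\Omega)$ to get the sub-mean inequality; for $x_0\notin\Omega$ the inequality $0=v(x_0)\leq\int_{\de V}v\,\d\mu^V_{x_0}$ is immediate from $v\geq 0$. Your argument instead verifies the comparison definition on an \emph{arbitrary} bounded test set $V$, at the cost of first proving $h\geq 0$ on $V$ and then running the Weak Maximum Principle for $u-h$ on $V\cap\Omega$ with a three-way boundary case split. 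The paper's approach is shorter because localization eliminates the mixed boundary $\de(V\cap\Omega)$ altogether, but it leans on the auxiliary equivalence in Proposition~\ref{prop.localcritsuph}; your approach is more self-contained, using only \eqref{eq.WMPintro}, and makes transparent exactly where hypothesis \eqref{eq.conditionlemma} enters.
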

  The proof of Lemma \ref{lem.gluingsubHb} requires some basic notions and facts
  coming from Potential Theory; for this reason, we postpone it to the Appendix.
  \begin{lemma} \label{lem.conditionlimsup}
  Let $F\subseteq\RN$ be any (non-void) set and let
  $u\in\subHb(\RN)$. We assume that $u$ is \emph{not constant}
  in $\RN$.  
  Then, the fol\-lo\-wing statements are equivalent:
  \begin{align}
   & \text{\emph{(i)}}\,\,\dsy \limsup_{
	 \begin{subarray}{c}
	  x\to\infty \\
	  x\in F
	  \end{subarray}}u(x) = \limsup_{
	 \begin{subarray}{c}
	  x\to\infty \\
	  x\in \RN
	  \end{subarray}}u(x); \label{eq.conditionlimsupI} \\
	 & \text{\emph{(ii)}}\,\,\dsy\sup_F u = \sup_{\RN}u. \label{eq.conditionlimsupII}
  \end{align}    
   \end{lemma}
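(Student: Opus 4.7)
The plan is to reduce both implications to a single key fact: \emph{a non-constant bounded-above $\LL$-subharmonic function on the whole of $\RN$ cannot attain its supremum}. This is the Strong Maximum Principle for $\LL$, which follows from the Harnack inequality of \cite{BBB} (together with assumption (H3)); in the abstract Potential-Theoretic language recalled in the Appendix, it is the statement that an $\LL$-subharmonic function on a connected open set which attains its maximum must be constant there, applied to $\Omega = \RN$.

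Write $M := \sup_{\RN} u$ and $M_F := \sup_F u$; by hypothesis $M < \infty$, and clearly $M_F \leq M$. Similarly set $L_F := \limsup_{x\to\infty,\,x\in F} u(x)$ and $L_{\RN} := \limsup_{x\to\infty,\,x\in\RN} u(x)$, so that $L_F \leq L_{\RN} \leq M$ whenever $F$ is unbounded. The first preliminary step is to prove that \emph{$L_{\RN} = M$}. Indeed, choose a sequence $\{y_n\}\subseteq\RN$ with $u(y_n)\to M$. If $\{y_n\}$ were bounded, we could extract $y_{n_k}\to y^\ast\in\RN$; the upper semi-continuity of $u$ would then give $u(y^\ast)\geq\limsup_k u(y_{n_k}) = M$, forcing $u(y^\ast)=M$. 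By the Strong Maximum Principle recalled above, $u$ would be constant on $\RN$, contradicting our hypothesis. Hence $\|y_n\|\to\infty$, which yields $L_{\RN}\geq M$, and the reverse inequality is trivial.

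Now I prove the two implications. For \emph{(i)}$\Rightarrow$\emph{(ii)}: by (i) and the preliminary step, $L_F = L_{\RN} = M$. In particular $F$ must be unbounded (otherwise the convention in the footnote to Definition \ref{def.LLthinness} would give $L_F = \infty \neq M$), and by the very definition of $\limsup$ at infinity there exists a sequence $\{x_n\}\subseteq F$ with $\|x_n\|\to\infty$ and $u(x_n)\to M$. Therefore $M_F \geq M$, and combining with $M_F\leq M$ gives $M_F = M$, which is (ii).

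For \emph{(ii)}$\Rightarrow$\emph{(i)}: by (ii) there exists $\{x_n\}\subseteq F$ with $u(x_n)\to M$. By exactly the same compactness-plus-upper-semi-continuity argument used in the preliminary step, $\{x_n\}$ cannot be bounded, for otherwise we would produce a point of $\RN$ at which $u$ equals $M$, contradicting non-constancy via the Strong Maximum Principle. Thus $\|x_n\|\to\infty$ along a subsequence in $F$, whence $L_F\geq M = L_{\RN}$, and the reverse inequality $L_F\leq L_{\RN}$ is trivial.

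The only non-routine ingredient is the Strong Maximum Principle for $\LL$-subharmonic functions on $\RN$; this is the step that genuinely uses the structural assumptions (H1)-(H3), through the Harnack inequality of \cite{BBB}. Everything else is a short upper-semi-continuity/compactness argument.
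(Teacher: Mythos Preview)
Your proof is correct and follows essentially the same approach as the paper: both hinge on the single non-trivial fact that a non-constant $u\in\subHb(\RN)$ satisfies $u(x)<\sup_{\RN}u$ for every $x$ (the paper cites this as Theorem~\ref{thm.minimumsupH}, you call it the Strong Maximum Principle), from which one deduces $\limsup_{x\to\infty}u(x)=\sup_{\RN}u$ and that any maximizing sequence must escape to infinity. The only cosmetic difference is in (ii)$\Rightarrow$(i): the paper phrases the argument via the intermediate inequality $\sup_{F\cap B(0,r)}u<\sup_{F\setminus B(0,r)}u$ and then lets $r\to\infty$, whereas you argue directly with a maximizing sequence in $F$; these are the same idea.
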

   \begin{proof}
   $\text{(i)\,$\Rightarrow$\,(ii).}$\,\,Since, by assumption,
   $u$ is not constant in $\RN$ and the constant functions
   are $\LL$-harmonic, the Minimum Principle
   in Theorem \ref{thm.minimumsupH} implies that
   \begin{equation} \label{eq.inequalitySMP}
    u(x) < \sup_{\RN}u \qquad \text{for every $x\in\RN$}.
   \end{equation}
   As a consequence, it is easy to recognize that
   $$\sup_{\RN}u = \limsup_{
	 \begin{subarray}{c}
	  x\to\infty \\
	  x\in \RN
	  \end{subarray}}u(x).$$
	This last identity allows us to conclude: indeed,
	if \eqref{eq.conditionlimsupI} holds, we have
	\begin{align*}
	 & \limsup_{
	 \begin{subarray}{c}
	  x\to\infty \\
	  x\in F
	  \end{subarray}}u(x) 
	  \leq \sup_{F} u \leq \sup_{\RN} u =
	  \limsup_{
	 \begin{subarray}{c}
	  x\to\infty \\
	  x\in \RN
	  \end{subarray}}u(x) 
	  \stackrel{\eqref{eq.conditionlimsupI}}{=}
	  \limsup_{
	 \begin{subarray}{c}
	  x\to\infty \\
	  x\in F
	  \end{subarray}}u(x),
	\end{align*}
	and thus \eqref{eq.conditionlimsupII} is satisfied, as desired. \medskip
	
	$\text{(ii)\,$\Rightarrow$\,(i).}$\,\,We first claim that, as a consequence of
	\eqref{eq.conditionlimsupII}, one has
	\begin{equation} \label{eq.supisatinfinity}
	 \sup_{F\cap B(0,r)}u\,\,<\,\,\sup_{F\setminus B(0,r)}u \qquad
	 \text{for every $r > 0$}
	\end{equation}
	(here, $B(0,r)$ denotes the Euclidean ball of centre $0$ and radius $r$). 
	
	Indeed,
	let us assume by contradiction that \eqref{eq.supisatinfinity} does not hold
	for some $r_0 > 0$. Since $u$ attains its maximum on any compact subset of $\RN$,
	it is possible to find a suitable point $x_0\in \overline{F\cap B(0,r_0)}$ such that
	$$u(x_0) = \sup_{F\cap B(0,r_0)} \geq \sup_{F\setminus B(0,r_0)} u.$$
	Owing to \eqref{eq.conditionlimsupII}, this implies that
	$$u(x_0) = \sup_F u = \sup_{\RN} u,$$
	which is contradiction with \eqref{eq.inequalitySMP}.
	Now we have established inequality \eqref{eq.supisatinfinity}, we are ready to conclude:
	indeed, by letting $r\to\infty$ in the cited \eqref{eq.supisatinfinity}, we get
	\begin{align*}
	 \sup_{F} u \leq \limsup_{
	 \begin{subarray}{c}
	  x\to\infty \\
	  x\in F
	  \end{subarray}}u(x)
	  \leq \limsup_{
	 \begin{subarray}{c}
	  x\to\infty \\
	  x\in \RN
	  \end{subarray}}u(x)
	  \leq \sup_{\RN} u 
	  \stackrel{\eqref{eq.conditionlimsupII}}{=} \sup_F u,
	\end{align*}
	and this proves that \eqref{eq.conditionlimsupI} is satisfied. This ends the proof.
   \end{proof}
   From Lemma \ref{lem.conditionlimsup} and the definition
   of $\LL$-thin set, we obtain the following result.
   \begin{corollary} \label{cor.FisnotLLthin}
    Let $F\subseteq\RN$ be any set. Then $F$ is $\LL$-thin at infinity
    if and only if it is possible to find a function $u\in\subHb(\RN)$ such that
    $$\sup_{F} u < \sup_{\RN}u.$$
    Conversely, $F$ \emph{is $\LL$-large at infinity}
    if and only if
    $$\sup_{F} u = \sup_{\RN} u \quad \text{for every $u\in\subHb(\RN)$}.$$ 
   \end{corollary}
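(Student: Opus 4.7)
The plan is to deduce both equivalences directly from Lemma \ref{lem.conditionlimsup}, by translating the definition of $\LL$-thinness (a strict inequality between limsups at infinity) into the corresponding statement about suprema over the whole sets.

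For the ``only if'' direction of the first characterization, I would start from a witness $u \in \subHb(\RN)$ of $\LL$-thinness, observe that $u$ must be non-constant (since constant functions produce equality on both sides, taking into account the footnote convention on bounded $F$), and then apply the contrapositive of Lemma \ref{lem.conditionlimsup}: the strict limsup inequality says condition (i) of that lemma fails, hence condition (ii) must also fail, and combined with the trivial bound $\sup_F u \leq \sup_{\RN} u$ this produces $\sup_F u < \sup_{\RN} u$. The ``if'' direction is symmetric: given $u \in \subHb(\RN)$ with $\sup_F u < \sup_{\RN} u$, the same function is non-constant, and Lemma \ref{lem.conditionlimsup} applied in the reverse direction yields the strict limsup inequality (using $\limsup_F u \leq \limsup_{\RN} u$ when $F$ is unbounded, and the stated footnote convention when $F$ is bounded), so $F$ is $\LL$-thin at infinity.

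The characterization of $\LL$-largeness is then immediate from this first equivalence: by definition a set is $\LL$-large precisely when it is not $\LL$-thin, and the negation of ``there exists $u \in \subHb(\RN)$ with $\sup_F u < \sup_{\RN} u$'' is exactly ``$\sup_F u = \sup_{\RN} u$ for every $u \in \subHb(\RN)$,'' since the opposite inequality always holds trivially. No step in this argument is really hard: the substantive content has already been carried out in Lemma \ref{lem.conditionlimsup} (which itself leans on the Minimum Principle of Theorem \ref{thm.minimumsupH}), and the corollary is essentially a repackaging of that lemma in terms of suprema rather than limsups at infinity. The only point demanding a little care is the exclusion of constant $u$, which is handled on both sides by the observation above.
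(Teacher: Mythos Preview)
Your approach is exactly the one the paper has in mind: the paper itself offers no explicit proof and simply states that the corollary follows ``from Lemma \ref{lem.conditionlimsup} and the definition of $\LL$-thin set,'' which is precisely the deduction you spell out. Your handling of the non-constant case via the contrapositive of Lemma \ref{lem.conditionlimsup}, together with the trivial inequality $\sup_F u \leq \sup_{\RN} u$, is correct, and the second characterization is indeed just the logical negation of the first.
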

   Gathering together Lemmas \ref{lem.gluingsubHb} and
   \ref{lem.conditionlimsup}, we are ready to prove
   Theorem \ref{thm.LLthinMPset}.
   \begin{proof} [Proof (of Theorem \ref{thm.LLthinMPset}).]
   We first prove that, if $\Omega$ is a maximum principle set for $\LL$,
   then its complement $\RN\setminus\Omega$ is $\LL$-large at infinity.
   To this end, we choose $u\in\subHb(\Omega)$ (which we may assume
   to be non constant in $\RN$) and we let
   $$u_0 := \sup_{\RN\setminus\Omega} u.$$
   Since $u$ is u.s.c.\,on $\RN$, for every $\xi\in\de\Omega$ we have
   $$\limsup_{x\to\xi}u(x) \leq u(\xi) \leq u_0;$$
   from this, since we are assuming that
   $\Omega$ is a MP set for $\LL$, we obtain
   $$\text{$u \leq u_0$\,\,in $\Omega$, \quad
   whence $\sup_{\RN} u = u_0 = \sup_{\RN\setminus\Omega} u$}.$$
   By Corollary \ref{cor.FisnotLLthin}, 
   we conclude that $\RN\setminus\Omega$ is $\LL$-large at infinity. \medskip
   
   We now assume that $\RN\setminus\Omega$ is $\LL$-large at infinity
   and we prove that $\Omega$ is a maximum principle set for $\LL$.
   To this end, we choose once again a function
   $u\in \subHb(\Omega)$ (which we may assume to be non constant in $\RN$) such that
   $$\limsup_{x\to\xi}u(x)\leq 0 \quad \text{for every $\xi\in\de\Omega$}$$
   and, according to Definition \ref{def:MPsetL}, we prove that $u\leq 0$ in $\Omega$.
   To begin with, owing to Lemma \ref{lem.gluingsubHb}, we see that the function
   $v:\RN\to[-\infty,\infty)$ defined by
   $$v(x) = \begin{cases}
    \max\{u(x),0\}, & \text{if $x\in\Omega$}, \\[0.1cm]
    0, & \text{if $x\in\RN\setminus\Omega$},
   \end{cases}
   $$
   is a $\LL$-subharmonic function in $\RN$ which is also bounded from above
   (as the same is true of $u$); since we are assuming that the set
   $\RN\setminus\Omega$ is $\LL$-large at infinity, we deduce
   from Corollary \ref{cor.FisnotLLthin} that
   $\sup_{\RN} v = \sup_{\RN\setminus\Omega} v = 0$, whence
   $$u(x) \leq \max\{u(x),0\} = v(x) \leq 0 \quad \text{for every $x\in\Omega$}.$$
   This ends the proof.
   \end{proof}
   \section{$\LL$-thinness at infinity and $p$-boundedness} \label{sec:LLthinpset}
  {The aim of this second section is to demonstrate the geometrical criterion
  for $\LL$-largeness at infinity contained in Theorem \ref{thm.mainpsetthin}.
   To this end, as already anticipated in the Introduction, 
   we need to require our PDOs $\LL$ to satisfy some additional
   assumptions, which we now properly introduce.}
   \begin{itemize}
    \item[(FS)] First of all, we assume that $\LL$ is endowed with a \emph{``well-behaved'' global fundamental
    solution}, that is, there exists a function
    $$\Gamma:\mathcal{O} := \big\{(x,y)\in\RN\times\RN: x\neq y\big\}\longto \R$$
    satisfying the following properties: \medskip
   \begin{itemize}
    \item[(a)] $\Gamma\in C^\infty(\mathcal{O},\R)$ and $\Gamma(x,y) > 0$
    for every $x,y\in\mathcal{O}$;
    \item[(b)] $\Gamma$ is symmetric, that is, $\Gamma(x,y) = \Gamma(y,x)$ for every
    $(x,y)\in\mathcal{O}$;
    \item[(c)] for every $x\in\RN$, we have
    $\Gamma(x,\cdot)\in L^1_{\mathrm{loc}}(\RN)$ and
    \begin{equation} \label{eq.deffundsol}
  \int_{\RN}\Gamma(x,y)\,\LL \varphi(y)\,V(y)\,\d y = -\varphi(x),
  \quad \text{$\forall\,\varphi\in C_0^\infty(\RN,\R)$};
   \end{equation}
   \item[(d)] for every $x \in \RN$, $\Gamma(x,\cdot)$
   has a pole at $x$ and it vanishes at infinity, i.e,
   \begin{equation} \label{eq.limitGamma}
   \lim_{y\to x}\Gamma(x,y) = \infty \qquad \text{and} \qquad
   \lim_{\|y\|\to\infty}\Gamma(x,y) = 0.
  \end{equation}
   \end{itemize}
   For the sake of brevity, given $x\in \RN$, in the sequel we set:
    $$\Gamma_x:\RN\setminus\{x\}\longto \R,\quad \Gamma_x(y) := \Gamma(x,y). $$

  \item[(G)] Our second assumption is a sort of ``\emph{geometric condition}''
  which concerns the super-level sets of the fundamental solution $\Gamma$.
  
  More precisely, for every fixed $x\in\RN$ and every $r > 0$, we define
  the \emph{open $\Gamma$-ball} of centre $x$ and radius $r$ in the following way
  $$\Omega(x,r) := \{y\in\RN\setminus\{x\}: \Gamma_x(y) > 1/r\}\cup\{x\};$$
  we then assume the existence of constant $\theta\in (0,1)$ such that
  \begin{equation} \label{eq.geometricGammaball}
   x\notin \Omega(y,r)\qquad \Longrightarrow
   \qquad \Omega(x,\theta r)\cap\Omega(y,\theta r) = \varnothing
  \end{equation}  	
  for every $x,y\in\RN$ and every $r > 0$.
  \item[(L)] Finally, we suppose that the following \emph{Liouville-type theorem} holds
  for $\LL$-harmonic functions: if $u\in\mathcal{L}(\RN)$ is a $\LL$-harmonic function
  which is bounded from above (or from below), then $u$ is constant throughout $\RN$.
  \end{itemize}
 Under our assumptions (FS), (G) and (L), we have the 
 following crucial result.
 \begin{theorem} \label{thm.reprformulaandintegral}
  Let $u\in\subHb(\RN)$ and let $\mu$ be $\LL$-Riesz measure of
  $u$. 
  Then
  \begin{equation} \label{eq.convintegralmu}
   \int_1^\infty\frac{\mu\big(\Omega(0,r)\big)}{r^2}\,\d r < \infty.
  \end{equation}
  Moreover, if $u_0 = \sup_{\RN}u$, we have the representation
  formula
  \begin{equation} \label{eq.reprglobal}
   u(x) = u_0 - \int_{\RN}\Gamma(x,y)\,\d\mu(y), \qquad\text{for every $x\in\RN$}.
  \end{equation}
  \end{theorem}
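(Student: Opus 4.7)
The plan is to apply the classical strategy of \emph{local Riesz decomposition on an exhausting family of sublevel sets of $\Gamma$}: for each $R > 0$, decompose $u$ on the bounded $\LL$-regular set $\Omega(0, R)$ into a harmonic-majorant part plus a Green-potential part, and then let $R \to \infty$. The representation formula will follow by monotone convergence, once we identify (i) the limit of the harmonic majorants as the constant $u_0$ via the Liouville assumption (L), and (ii) the limit of the Green functions $G_R$ as $\Gamma$. The integrability statement \eqref{eq.convintegralmu} will then be a consequence of the representation, a layer-cake formula, and the quasi-triangle inequality encoded in the geometric hypothesis (G).

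More concretely, for each $R > 0$ let $h_R$ denote the least $\LL$-harmonic majorant of $u$ on $\Omega(0, R)$ (which exists because the constant $u_0$ is already such a majorant) and let $G_R$ denote the Green function of $\Omega(0, R)$; the Riesz decomposition theorem then yields
$$u(x) = h_R(x) - \int_{\Omega(0, R)} G_R(x, y)\, d\mu(y), \quad x \in \Omega(0, R).$$
A direct computation using the monotonicity of Green functions with respect to the domain shows that both $h_R$ and $G_R$ are non-decreasing in $R$, and clearly $h_R \leq u_0$ by the maximum principle. Hence $h := \lim_R h_R$ is a pointwise limit bounded above by $u_0$, and by the standard Brelot-type convergence theorem for monotone sequences of $\LL$-harmonic functions, $h$ is $\LL$-harmonic on $\RN$; assumption (L) then forces $h \equiv c$ for a constant $c \leq u_0$. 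On the other hand, $\Gamma(x, \cdot) - G_R(x, \cdot)$ is a non-negative $\LL$-harmonic function on $\Omega(0, R)$ whose boundary trace on $\partial \Omega(0, R)$ is exactly $\Gamma(x, \cdot)$; since $\partial \Omega(0, R)$ escapes to infinity as $R \to \infty$, the decay \eqref{eq.limitGamma} combined with the maximum principle yields $G_R(x, y) \nearrow \Gamma(x, y)$ pointwise. Passing to the limit in the above decomposition via monotone convergence now produces $u(x) = c - \int_{\RN} \Gamma(x, y)\, d\mu(y)$ at every $x \in \RN$, and since the integral is non-negative, $u \leq c$ gives $u_0 \leq c$, so $c = u_0$ and \eqref{eq.reprglobal} is established.

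For the integrability \eqref{eq.convintegralmu}, I would pick $x_0 \in \RN$ with $u(x_0) > -\infty$ (which exists because $\{u > -\infty\}$ is dense in $\RN$) and apply \eqref{eq.reprglobal} at $x = x_0$ to obtain $\int \Gamma(x_0, y)\, d\mu(y) = u_0 - u(x_0) < \infty$. A layer-cake computation with the substitution $t = 1/r$ rewrites this as $\int_0^\infty \mu(\Omega(x_0, r))/r^2\, dr < \infty$. To transfer the result from center $x_0$ to center $0$, I would derive from (G), by contrapositive, the quasi-triangle inequality $\gamma(x, y) \leq \theta^{-1} \max\{\gamma(x, z), \gamma(y, z)\}$; this gives the inclusion $\Omega(0, \theta r) \subseteq \Omega(x_0, r)$ for all $r$ sufficiently large (namely $r \geq \gamma(0, x_0)/\theta$), hence $\mu(\Omega(0, \theta r)) \leq \mu(\Omega(x_0, r))$ for such $r$, and the desired bound on $\int_1^\infty \mu(\Omega(0, r))/r^2\, dr$ follows via a change of variables together with the local finiteness of $\mu$ on the bounded set $\Omega(0, \gamma(0, x_0)/\theta)$.

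The main technical obstacle, in my view, is the step of identifying the monotone pointwise limit of the $h_R$'s as an honest $\LL$-harmonic function on the whole of $\RN$, to which Liouville (L) can then be applied: this depends on the abstract Potential Theory machinery (and, in particular, on a Brelot-type convergence theorem for increasing sequences of $\LL$-harmonic functions) guaranteed by our structural assumptions. Once this is in place, the rest of the argument is essentially bookkeeping with monotone convergence and elementary quasi-metric manipulations.
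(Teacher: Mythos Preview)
Your argument is correct. For the representation formula \eqref{eq.reprglobal} your approach (local Riesz decomposition on $\Omega(0,R)$, monotone passage to the limit in both the harmonic majorants and the Green functions, then Liouville) is exactly the content behind the references the paper cites for this part, so there is no real divergence there.

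Where you genuinely differ is in the order and the mechanism for \eqref{eq.convintegralmu}. The paper establishes \eqref{eq.convintegralmu} \emph{first}, and \emph{independently} of the representation formula, by invoking the mean-value formulas for $\LL$ together with assumption (G), following the pattern of \cite[Theorem~9.6.1]{BLUlibro}. You instead reverse the logic: you prove \eqref{eq.reprglobal} first and then read off \eqref{eq.convintegralmu} as a corollary, via the layer-cake identity $\int_{\RN}\Gamma(x_0,y)\,\d\mu(y)=\int_0^\infty \mu(\Omega(x_0,r))\,r^{-2}\,\d r$ at a point with $u(x_0)>-\infty$, followed by the quasi-ultrametric inclusion $\Omega(0,\theta r)\subseteq\Omega(x_0,r)$ for large $r$ that you extract from (G). This is more economical and entirely self-contained (it bypasses the mean-value machinery), at the cost of making \eqref{eq.convintegralmu} depend on \eqref{eq.reprglobal} rather than standing on its own; the paper's route, by contrast, yields \eqref{eq.convintegralmu} as a result about the Riesz measure of any bounded-above $\LL$-subharmonic function without first needing the global potential representation.
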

  {It is proved in \cite{BattagliaBiagi} that, if $\Omega\subseteq\RN$
  is an open set and $u\in \subH(\Omega)$ (not necessarily bounded above), 
  then $u\in L^1_{\loc}(\Omega)$ and
  $\LL u \geq 0$ in the sense of distribution on $\Omega$. Hence,
  the $\LL$-Riesz measure $\mu$ of $u$ is defined by $\mu := \LL u$.}
  \begin{proof}
  {The proof of \eqref{eq.convintegralmu} is analogous to that of    
  \cite[Theorem 9.6.1]{BLUlibro}: it is crucially based on assumption (G) and
  on the mean value formulas 
  for $\LL$ established in \cite{BattagliaBonfiglioli}.}
  
  {As for the proof of
  \eqref{eq.reprglobal}, it can be obtained
  by combining the Liouville-type theorem in 
  assumption (L) with \cite[Remark 5.5]{BattagliaBiagi}
  (see also \cite[Corollary 9.4.8]{BLUlibro}).}
  \end{proof}
  \begin{remark} \label{rem.assumptionGdistance}
 	We point out, for future reference, that the ``geometric condition''
 	in assumption (G) is actually equivalent to requiring that the function
 	$$\RN\times\RN \ni (x,y)\mapsto \gamma(x,y) = \gamma_x(y) := \begin{cases}
 	1/\Gamma(x,y), & \text{if $x\neq y$}, \\
 	0, & \text{if $y = x$},
 	\end{cases}$$ 
 	satisfies a pseudo-triangle inequality,
 	that is, there exists $\mathbf{c} > 1$ such that
 	\begin{equation} \label{eq.pseudogamma}
 	 \gamma(x,y) \leq \mathbf{c}\big(\gamma(x,z)+\gamma(z,y)\big) 
 	\qquad\text{for every $x,y,z\in\RN$}.
 	\end{equation}
 	Indeed, if \eqref{eq.pseudogamma} holds, it is very easy to recognize that
 	assumption (G) is satisfied with $\theta = 1/(2\mathbf{c}) < 1$. On the other hand, if 
 	\eqref{eq.geometricGammaball} holds, one has
 	$$\Omega\big(x,\theta/\Gamma(x,y)\big)\cap\Omega\big(y,\theta/\Gamma(x,y)\big) = \varnothing
 	\quad \text{for every $x\neq y$}.$$
 	From this, we easily obtain the validity of \eqref{eq.pseudogamma} with $\mathbf{c} = 1/\theta > 1$.  	
 \end{remark} 
 \begin{remark} \label{rem.GGammaquasid}
 	By Remark \ref{rem.assumptionGdistance}
 	and the properties of $\Gamma$ listed in assumption (FS), we derive
 	that $\gamma = 1/\Gamma$ is a \emph{quasi-distance} in $\RN$.
 	In fact, we have
 	\begin{itemize}
 	 \item $\gamma \geq 0$ on $\RN\times\RN$ and $\gamma(x,y) = 0$ if and only if $x = y$;
 	 \item $\gamma(x,y) = \gamma(y,x)$ for every $x,y\in\RN$;
 	 \item $\gamma(x,y)\leq\mathbf{c}\big(\gamma(x,z)+\gamma(z,y)\big)$
 	 for every $x,y,z\in\RN$.
 	\end{itemize}
 	Furthermore, for every $x\in\RN$ and every $r > 0$ we have
 	$$\Omega(x,r) = \{y\in\RN:\gamma(x,y) < r\}.$$
  \end{remark}
  {Now we have introduced assumptions (FS), (G) and (L), we proceed to
  the proof of Theorem \ref{thm.mainpsetthin}. 
  To begin with, we list in the next remark some useful pro\-per\-ties
  of $p_\LL$-bounded sets which follow immediately from Definition \ref{def.pset}.}
  \begin{remark} \label{rem.propertiespset}
   (1) If $F\subseteq\RN$ is bounded, then $F$ is $p_\LL$-bounded (for any $p > 1$). \medskip
   
   \noindent 
   (2) If $F_0\subseteq F$ and $F$ is $p$-bounded, then also $F_0$ is $p_\LL$-bounded. \medskip
    
    \noindent (3) If $\{F_j\}_{j = 1}^n$ are $p_\LL$-bounded (for the same $p$), then
    $F = \cup_j F_j$ is $p_\LL$-bounded. \medskip
    
    \noindent (4) If $F_0\subseteq F$ is $p_\LL$-bounded, then $F\setminus F_0$ is
    $p_\LL$-unbounded whenever $F$ is.
  \end{remark}
  {We then turn to demonstrate Theorem \ref{lem.mainpsetLLthin}
  stated in the Introduction: as anticipated,
  this result is
  the key tool for proving Theorem \ref{thm.mainpsetthin}. In its turn, the proof
  of the cited Theorem \ref{lem.mainpsetLLthin} is crucially based
  on the next lemma.}
  \begin{lemma} \label{lem.Cartantype}
   Let $\mu$ be a positive Radon measure on $\RN$ such that
   $\mu_0 = \mu(\RN)$ is \emph{finite}. Moreover, let $p\in(1,\infty)$ be fixed
   and let $h > 0$. Then the set
   $$\{x\in\RN: \Gamma\mu(x) \geq h\} = \bigg\{x\in\RN: \int_{\RN}\Gamma(x,y)\,\d\mu(y) \geq h\bigg\}$$
   can be covered a finite or countable family $\mathcal{F} = \{\overline{\Omega(x_n,r_n)}\}_{n\in J}$
   of closed $\Gamma$-balls satisfying the following property: \emph{there exists a real
   constant $A_p > 0$ such that}
   \begin{equation} \label{eq.Cartancovering}
    \sum_{n\in J}(r_n)^p < A_p\,\bigg(\frac{\mu_0}{h}\bigg)^p.
   \end{equation}
  \end{lemma}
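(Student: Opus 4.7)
The plan is to mimic the classical Cartan-type covering lemma for Newtonian potentials, working here with the quasi-distance $\gamma = 1/\Gamma$ of assumption (G) (see Remark \ref{rem.GGammaquasid}) and exploiting the free parameter $p > 1$. Set $E := \{x \in \RN : \Gamma\mu(x) \geq h\}$. Since $\Omega(x,s) = \{y:\Gamma(x,y)>1/s\}$ (up to the irrelevant point $x$), applying the layer-cake identity to $y\mapsto\Gamma(x,y)$ and performing the substitution $s=1/t$ yields the representation
$$\Gamma\mu(x) \,=\, \int_0^\infty \mu\big(\Omega(x,s)\big)\,\frac{\d s}{s^2}, \qquad x\in\RN.$$
My first step will be to show that, for a suitable constant $c_p > 0$ depending only on $p$, every $x \in E$ admits a radius $r(x) > 0$ such that
$$\mu\big(\Omega(x,r(x))\big) \,\geq\, c_p\,\frac{(h\,r(x))^p}{\mu_0^{p-1}}.$$
Indeed, if this were to fail (so that $\mu(\Omega(x,s)) < c_p (hs)^p/\mu_0^{p-1}$ for every $s>0$), combining this with the trivial bound $\mu\leq\mu_0$ and splitting the above integral at the value $s_0=\mu_0/(h\,c_p^{1/p})$ where the two upper bounds coincide gives
$$\Gamma\mu(x) \,\leq\, \frac{c_p h^p}{\mu_0^{p-1}} \int_0^{s_0}\! s^{p-2}\,\d s \,+\, \mu_0 \int_{s_0}^\infty \frac{\d s}{s^2} \,=\, c_p^{1/p}\,\frac{p}{p-1}\,h,$$
where integrability near $0$ uses $p>1$. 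Choosing $c_p := \tfrac12\big((p-1)/p\big)^p$ forces $\Gamma\mu(x)<h$, a contradiction. The bound $\mu\big(\Omega(x,r(x))\big)\le\mu_0$ together with the lower estimate also gives $r(x)\le c_p^{-1/p}\mu_0/h$, so the selected radii are uniformly bounded.

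With these radii at hand, I will apply a standard Vitali-type selection in the quasi-metric space $(\RN, \gamma)$. Since $\gamma$ has pseudo-triangle constant $\mathbf{c} = 1/\theta$, a greedy argument produces a countable disjoint subfamily $\{\Omega(x_n, r_n)\}_{n \in J}$ (with $r_n=r(x_n)$) together with a dilation constant $K = K(\mathbf{c})$ (for instance $K = \mathbf{c} + 2\mathbf{c}^2$, obtained from the quantitative pseudo-triangle inequality applied to two intersecting $\gamma$-balls) such that $E \subseteq \bigcup_{n \in J} \Omega(x_n, K r_n)$. Disjointness of $\{\Omega(x_n,r_n)\}_n$ gives $\sum_n \mu\big(\Omega(x_n,r_n)\big) \leq \mu_0$; inserting the pointwise lower bound from the previous step, I obtain
$$\frac{c_p\,h^p}{\mu_0^{p-1}}\sum_{n\in J}r_n^p \,\leq\, \mu_0, \qquad\text{whence}\qquad \sum_{n\in J}r_n^p \,\leq\, \frac{1}{c_p}\bigg(\frac{\mu_0}{h}\bigg)^p.$$
Passing to the covering family $\{\overline{\Omega(x_n,Kr_n)}\}_{n\in J}$ multiplies each radius by $K$, so that the asserted inequality holds with $A_p := K^p/c_p$.

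The main delicate point is the bookkeeping in the first step, where the sharp factor $((p-1)/p)^p$ must emerge from the integral split; the assumption $p>1$ is indispensable, since $\int_0^{s_0} s^{p-2}\,\d s$ diverges for $p\leq 1$, and in fact the conclusion of the lemma cannot hold at $p=1$. A secondary technical issue is the Vitali selection in a quasi-metric (rather than metric) space: although classical, one must track carefully the dependence of the enlargement constant $K$ on $\mathbf{c}$, since the standard $5r$-covering theorem does not apply verbatim when $\gamma$ fails the true triangle inequality.
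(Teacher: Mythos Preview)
Your argument is correct and follows a genuinely different route from the paper's. You first use the layer-cake identity to represent $\Gamma\mu(x)$ as $\int_0^\infty \mu(\Omega(x,s))\,s^{-2}\,\d s$, then run a contradiction argument to attach to each $x\in E$ a radius $r(x)$ at which $\mu(\Omega(x,r(x)))\gtrsim (h\,r(x))^p/\mu_0^{p-1}$; finally you invoke a Vitali-type lemma in the quasi-metric $\gamma$ to extract a disjoint subfamily whose $K$-dilates cover $E$, and the disjointness plus the pointwise lower bound yields the $\ell^p$ control on the radii. The paper instead builds the cover scale by scale: for each $n$ it fixes a single radius $r_n = (\mu_0/h)\,2^{-2n/(p+1)}$, selects a \emph{maximal} disjoint family $\mathcal{D}_n$ of $\Gamma$-balls of that radius having $\mu$-mass at least $\mu_0/2^n$ (so $\#\mathcal{D}_n\leq 2^n$), and takes the cover to be the $\theta^{-1}$-dilates of all these balls; the complement of this union is then shown, via a dyadic splitting of the potential, to satisfy $\Gamma\mu\leq A_p h$. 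Your approach is cleaner conceptually---it isolates the key pointwise estimate first and offloads the combinatorics to the Vitali black box---whereas the paper's proof is entirely self-contained, avoids invoking an abstract covering lemma in quasi-metric spaces, and makes the dependence of $A_p$ on $\theta$ fully explicit.
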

  \begin{proof}
	For every fixed natural $n$, we define
	$$r_n := \frac{\mu_0}{h}\cdot 2^{-2n/(p+1)}$$
	and we choose a maximal family $\mathcal{D}_n$ of disjoint $\Gamma$-balls
	of radius $r_n$
	such that
	$$\mu(B) \geq \frac{\mu_0}{2^n} \quad \text{for every $B\in\mathcal{D}_n$}.$$
	Since, by assumption, $\mu_0 = \mu(\RN) < \infty$ and the $\Gamma$-balls in $\mathcal{D}_n$ are disjoint,
	it is readily seen that $\mathcal{D}_n$ consists of at most $k_n\leq 2^n$ elements; hence, we write
	$$\mathcal{D}_n = \Big\{\Omega_{k,n} = \Omega(x_{k,n},r_n)\,\,:\,\,k = 1,\ldots,k_n\Big\}.$$
	If $\theta$ is the constant appearing in assumption (G), we then define
	$$F := \bigcup_{n = 1}^{\infty}\bigcup_{k = 1}^{k_n}\Omega(x_{k,n},r_n/\theta).$$
	We now observe that, if $x\notin F$, then $\Omega(x,r_n)$ does not intersect any element of
	the family 
	$\mathcal{D}_n$: in fact, since $x\notin \Omega(x_{k,n},r_n/\theta)$, assumption (G) implies that
	$$\Omega(x,r_n)\cap\Omega(x_{k,n},r_n) = \varnothing \quad
	\text{for every $n\in\N$ and every $k\leq k_n$}.$$
	As a consequence, since $\mathcal{D}_n$ is maximal, 
	we infer that $\Omega(x,r_n)\notin\mathcal{D}_n$, whence
	$$\mu\big(\Omega(x,r_n)\big)\leq \frac{\mu_0}{2^n} \quad
	\text{for every $n\in\N$}.$$
	In particular, $\mu(\{x\}) = 0$. For every $x\in F$, we then have
	\begin{align*}
	 \Gamma\mu(x) & = \int_{\RN\setminus\{0\}}\Gamma(x,y)\,\d\mu(y) \\[0.2cm]
	 & = \bigg(\int_{\RN\setminus\overline{\Omega(x,r_1)}} + \sum_{n = 1}^\infty
	 \int_{\Omega(x,r_n)\setminus\overline{\Omega(x,r_{n+1})}}\bigg)\Gamma(x,y)\,\d\mu(y) \\[0.2cm]
	 & \leq \frac{\mu_0}{r_1} + \sum_{n = 1}^\infty \frac{\mu\big(\Omega(x,r_{n})\big)}{r_{n+1}} 
	 \leq \mu_0\bigg(\frac{1}{r_1}+\sum_{n = 1}^\infty\frac{1}{2^n\,r_{n+1}}\bigg) \\[0.2cm]
	 & = \mu_0\,\sum_{n = 1}^\infty 2^{1-n}\,r_{n}^{-1}
	 = 2\,h\,\sum_{n = 1}^\infty \bigg(2^{\frac{1-p}{1+p}}\bigg)^n \\[0.2cm]
	 & = A_p\,h,
	\end{align*}
	where $A_p$ only depends on $p > 1$. We have thus proved that, for every $x\in F$, we have
	$\Gamma\mu(x)\leq A_p\,h$; this obviously implies the inclusion
	$$\{x\in\RN: \Gamma\mu(x) > A_p\,h\} \subseteq F = 
	\bigcup_{n = 1}^{\infty}\bigcup_{k = 1}^{k_n}\Omega(x_{k,n},r_n/\theta).$$
	Furthermore, since $k_n\leq 2^n$, we have
	\begin{align*}
	 & \sum_{n = 1}^\infty\sum_{k = 1}^{k_n}\big(r_n\big)^p
	 \leq \bigg(\frac{\mu_0}{h}\bigg)^p\cdot \sum_{n = 1}^\infty 2^n\,(r_n)^p \\
	 & \qquad\qquad = \bigg(\frac{\mu_0}{h}\bigg)^p\cdot\sum_{n = 1}^\infty\bigg(2^{\frac{1-p}{1+p}}\bigg)^n
	 = A_p\,\bigg(\frac{\mu_0}{h}\bigg)^p.
	\end{align*}
	Since the constant $A_p$ is positive and only depends on $p$, the lemma is proved.
  \end{proof}
  {With Lemma \ref{lem.Cartantype} at hand, 
  we can prove Theorem \ref{lem.mainpsetLLthin}.}
  \begin{proof} [Proof (of Theorem \ref{lem.mainpsetLLthin}).]
  Let $u$ be as in the statement of the theorem. Moreover, 
   let $\theta$ be the constant appearing in assumption (G) and let $n\in\N$ be fixed.
   If $\mu$ is the $\LL$-Riesz measure of $u$ and $u_0 = \sup_{\RN}u$, by Theorem
   \ref{thm.reprformulaandintegral} we have
   $$u(x) = u_0 - \int_{\RN}\Gamma(x,y)\,\d\mu(y) = I_1(x)+I_2(x)+I_3(x),$$
   where we have used the notations 
   \begin{equation*}
   \begin{split}
    & I_1(x) := \int_{\{\Gamma_0(y)\geq\theta^{n-1}\}}\Gamma(x,y)\,\d\mu(y); \\[0.2cm]
    & \qquad\quad I_2(x) := \int_{\{\theta^{n+2}<\Gamma_0(y)<\theta^{n-1}\}}\Gamma(x,y)\,\d\mu(y); \\[0.2cm]
    & \qquad\qquad
    \quad\,\, I_3(x) := \int_{\{\Gamma_0(y)\leq\theta^{n+2}\}}\Gamma(x,y)\,\d\mu(y).
   \end{split}
   \end{equation*}
   We then consider the set $\Omega_n$ defined by
   $$\Omega_n := \{x\in\RN: \theta^{n+1} < \Gamma_0(x) \leq \theta^n\}.$$
   and we proceed by estimating $I_1(x), I_2(x)$ and $I_3(x)$
   when $x\in\Omega_n$. \medskip
   
   \textsc{Estimate of $I_1$}. We first observe that, if $x\in\Omega_n$,
   then $x\notin \Omega(0,\theta^{-n})$; thus, by assumption (G), we have
   $\Omega(x,\theta^{1-n})\cap\Omega(0,\theta^{1-n})=\varnothing$, whence
   $$\Gamma(x,y)\leq \theta^{n-1} \quad \text{for every $y\in\overline{\Omega(0,\theta^{1-n})}$}.$$
   From this, we obtain the following estimate for $I_1(x)$:
   \begin{equation} \label{eq.estimI1}
   \begin{split}
    I_1(x) & \leq \theta^{n-1}\,\mu\big(\overline{\Omega(0,\theta^{1-n})}\big)
    \leq \frac{1}{\theta}\cdot\Big(\theta^n\,\mu\big(\Omega(0,\theta^{-n})\big)\Big) \\[0.2cm]
    & = \frac{1}{\theta}\cdot
    \Big(\mu\big(\Omega(0,\theta^{-n})\big)\,\int_{\theta^{-n}}^\infty\frac{1}{r^2}\,\d r\Big) \\[0.2cm]
    & \leq \frac{1}{\theta}\cdot\int_{\theta^{-n}}^{\infty}\frac{\mu\big(\Omega(0,r)\big)}{r^2}\,\d r.
    \end{split}
    \end{equation}
    We explicitly point out that, as a consequence of Theorem \ref{thm.reprformulaandintegral}, 
    the integral in the far left-hand side of the above inequality is finite. \medskip
    
    \textsc{Estimate of $I_3$}. Let $y\in\RN$ be such that $\Gamma_0(y)\leq \theta^{n+2}$ and let
    $$\rho_y = (\Gamma_0(y))^{-1} > 0.$$
    Since, obviously, $y\notin\Omega(0,\rho_y)$, assumption (G) implies
    that the $\Gamma$-balls 
    $\Omega(0,\theta\rho_y)$ and $\Omega(y,\theta\rho_y)$ are disjoint; on the other hand,
    if $x\in \Omega_n$, one has
    $$\Gamma_0(x) > \theta^{n+1} = \frac{1}{\theta}\cdot\theta^{n+2} \geq \frac{\Gamma_0(y)}{\theta}
    = \big(\theta\rho_y\big)^{-1},$$
    and thus $x\in\Omega(0,\theta\rho_y)$. As a consequence, we derive that
    $x\notin\Omega(y,\theta\rho_y)$, whence
    $$\Gamma(y,x) = \Gamma(x,y) \leq \big(\theta\rho_y\big)^{-1} = \frac{\Gamma(0,y)}{\theta}.$$
    By exploiting this last estimate, we obtain
    \begin{equation*}
    \begin{split}
    I_3(x) & \leq \frac{1}{\theta}\cdot\int_{\{\Gamma_0(y)\leq\theta^{n+2}\}}\Gamma_0(y)\,\d\mu(y) \\[0.2cm]
    & = \frac{1}{\theta}\cdot\int_{\{\Gamma_0(y)\leq\theta^{n+2}\}}\bigg(
    \int_{1/\Gamma_0(y)}^{\infty}\frac{1}{r^2}\,\d r\bigg)\,\d\mu(y) \\[0.2cm]
    & \leq \frac{1}{\theta}\cdot\int_{\theta^{-n-2}}^\infty\frac{\mu\big(\Omega(0,r)\big)}{r^2}\,\d r
    \\[0.2cm]
    & \leq 
    \frac{1}{\theta}\cdot\int_{\theta^{-n}}^{\infty}\frac{\mu\big(\Omega(0,r)\big)}{r^2}\,\d r.
    \end{split}
    \end{equation*}
    
    \textsc{Estimate of $I_2$}. The estimate of $I_2(x)$ (when $x\in \Omega_n$) is the crucial part
    of the proof. To begin with, we fix $p\in(1,\infty)$ and we define
    \begin{equation} \label{eq.defmunetan}
     \begin{split}
      & \mu_n := \mu\big(\{y\in\RN : \theta^{n+2}<\Gamma_0(y)<\theta^{n-1}\}\big); \\[0.2cm]
      & \eta_n := \mu_n\,\theta^n; \\[0.2cm]
      & \e_n := \eta_n^{1-1/p}.
     \end{split}
    \end{equation}
    We claim that series $\sum_{n = 1}^\infty\eta_n$ is convergent.
    In fact, for every $n\in\N$ we have
    \begin{align*}
    \eta_n & \leq \frac{1}{\theta^2}\cdot\int_{\{\theta^{n+2}<\Gamma_0(y)<\theta^{n-1}\}}\Gamma_0(y)\,\d\mu(y) \\[0.2cm]
    & \leq \frac{1}{\theta^2}\cdot\bigg(\int_{\{\Gamma_0(y)\leq \theta^{n-1}\}}
    -\int_{\{\Gamma_0(y)\leq\theta^{n+2}\}}\bigg)\Gamma_0(y)\,\d\mu(y) \\[0.2cm]
    & \leq \frac{1}{\theta^2}\cdot
    \bigg\{\int_{\theta^{1-n}}^{\theta^{-2-n}}\frac{\mu\big(\Omega(0,r)\big)}{r^2}\,\d r  \\
    & \qquad\qquad + \Big(\theta^{n+2}\,\mu\big(\Omega(0,\theta^{-2-n})\big)
    - \theta^{n-1}\,\mu\big(\Omega(0,\theta^{1-n})\big) 
    \Big)\bigg\}.
    \end{align*}
    On the other hand, by arguing as for the estimate of
    $I_1(x)$, we see that
    $$r\,\mu\big(\Omega(0,r)\big) \leq \int_r^\infty\frac{\mu\big(\Omega(0,t)\big)}{t^2}\,\d t \longto 0 \quad
    \text{as $r\to\infty$};$$
    as a consequence, we obtain
    \begin{align*}
     & \sum_{n = 1}^\infty\Big(\theta^{n+2}\,\mu\big(\Omega(0,\theta^{-2-n})\big)
    - \theta^{n-1}\,\mu\big(\Omega(0,\theta^{1-n})\big) 
    \Big) \\
    & \quad = -\Big(\theta^2\,\mu\big(\Omega(0,\theta^{-2})\big)
     + \theta\,\mu\big(\Omega(0,\theta^{-1})\big) + \mu\big(\Omega(0,1)\big)\Big) < 0.
    \end{align*}
    Gathering together all these facts, we conclude that (see Theorem \ref{thm.reprformulaandintegral})
    $$\sum_{n = 1}^\infty \eta_n \leq \frac{1}{\theta^2}\cdot\int_1^\infty
    \frac{\mu\big(\Omega(0,r)\big)}{r^2}\,\d r < \infty,$$
    as claimed. In particular, we have $\eta_n\longto 0$ as $n\to\infty$.
    
    We now observe that, if we consider the Radon measure $\lambda_n$ defined by
    $$\d\lambda_n = \mathbf{1}_{\{\theta^{n+2}<\Gamma_0<\theta^{n-1}\}}\,\d\mu,$$
    we have $\lambda_n(\RN) = \mu_n < \infty$ and, for every $x\in\RN$, we can write
    $$I_2(x) = \int_{\RN}\Gamma(x,y)\,\d\lambda_n(y) = \Gamma\lambda_n(x).$$
    By Lemma \ref{lem.Cartantype}, it is then possible to find a
    family $\mathcal{F}_n = \{\overline{\Omega(x_{k,n},r_{k,n})}\}_{k\in J_n}$
    (with $J_n\subseteq\N$)
   of closed $\Gamma$-balls satisfying the following properties: \medskip
   
   (i)\,\,$\{x\in\Omega_n : I_2(x) < \e_n\} \supseteq \Omega_n\setminus\bigcup_{k \in J_n}
   \overline{\Omega(x_{k,n},r_{k,n})}$; \medskip
   
   (ii)\,\,$\sum_{k\in J_n}(r_{k,n})^p < A_p\,\big(\mu_n/\e_n\big)^p$ for a suitable constant
   $A_p > 0$. \medskip
   
   \noindent As a consequence of property (ii), for every $k\in J_n$ we have
   \begin{equation} \label{eq.touseestim}
    r_{k,n} \leq A_p^{1/p}\,\big(\mu_n/\e_n) \stackrel{\eqref{eq.defmunetan}}{=} 
    (A_p\,\eta_n)^{1/p}\,\theta^{-n};
    \end{equation}
    moreover, by property (i), we can assume that $\Omega(x_{k,n},r_{k,n})\cap \Omega_n \neq \varnothing$
    for every index $k\in J_n$. This implies the existence of $n_0\in\N$ such that
    $$\Gamma_0(x_{k,n}) \leq \theta^{n-2} \quad
    \text{for every $n\geq n_0$ and every $k\in J_n$}.$$
    Indeed, if $z$ is any point in $\Omega(x_{k,n},r_{k,n})\cap \Omega_n\subseteq\Omega_n$, we see that
    $z\notin \Omega(0,\theta^{-n})$ and thus, again by assumption (G),
    we infer that $$\Omega(z,\theta^{1-n})\cap\Omega(0,\theta^{1-n}) = \varnothing.$$
    On the other hand, since $z$ also belongs to $\Omega(x_{k,n},r_{k,n})$, by \eqref{eq.touseestim} one has
    $$\Gamma(x_{k,n},z) = \Gamma_z(x_{k,n}) > \frac{1}{r_{k,n}} \geq \theta^n\,\big(A_p\,\eta_n\big)^{-1/p};$$
    as a consequence, if $n_0\in\N$ is such that
    $(A_p\,\eta_n)^{-1/p} > \theta^{-1}$ for every $n \geq n_0$ (note that
    $\eta_n\to 0$ as $n\to\infty$ and $-1/p < 0$), we derive that
    $\Gamma_z(x_{k,n}) > \theta^{n-1}$, whence $x_{k,n}\in\Omega(z,\theta^{1-n})$, and thus
    $x_{k,n}\notin\Omega(0,\theta^{1-n})$. This implies that
    $$\Gamma_0(x_{k,n})\leq \theta^{n-1} < \theta^{n-2} \quad
    \text{for every $n\geq n_0$ and every $k\in J_n$}.$$
 	By combining this last estimate with the choice of $\e_n$ and property (ii), we get
 	\begin{equation} \label{eq.touseFispset}
 	 \begin{split}
 	 \sum_{n = n_0}^\infty\sum_{k\in J_n}&\big(\Gamma_0(x_{k,n})\,r_{k,n}\big)^p
 	 \leq (A_p\,\theta^{-2p})\,\sum_{n = n_0}^\infty\theta^{pn}\,\bigg(\frac{\mu_n}{\e_n}\bigg)^p
      \\
      & \qquad = (A_p\,\theta^{-2p})\cdot\sum_{n = n_0}^\infty\eta_n < \infty.
 	\end{split}
 	\end{equation}
 	Furthermore, by collecting the estimates for $I_1(x),I_2(x)$ and $I_3(x)$, we obtain
 	\begin{equation} \label{eq.touselimit}
 	 \begin{split}
 	 u_0 - u(x) & \leq \frac{2}{\theta}\cdot\int_{\theta^{-n}}^\infty\frac{\mu\big(\Omega(0,r)\big)}{r^2}\,\d r
 	 + \eta_n^{1-1/p}
 	 \end{split}
 	\end{equation}
 	for every $x\in\Omega_n$ such that $I_2(x) < \e_n$.
 	We finally claim that the set
 	$$F := \bigcup_{n \geq n_0}\bigcup_{k\in J_n}\Omega(x_{k,n},r_{k,n})$$
 	is $p_\LL$-bounded and it satisfies \eqref{eq.limitoutF}. In fact, if we introduce the family 
 	$$\mathcal{F} := \big\{\Omega(x_{k,n},r_{k,n})\big\}_{
 	\begin{subarray}{c}
 	n\geq n_0 \\
 	k\in J_n
 	\end{subarray}},$$
 	we derive from \eqref{eq.touseFispset} that $\mathcal{F}$ (which is obviously
 	a countable cover of $F$)
 	satisfies property (b) in Definition \ref{def.pset}, hence $F$ is $p_\LL$-bounded;
 	moreover, since $\theta^{-n}\to\infty$ and $\eta_n\to 0$ as $n\to\infty$
 	(note that $\theta < 1$ and $p > 1$), for every $\e > 0$
 	it is possible to find $n_\e \geq n_0$ such that (see also \eqref{eq.convintegralmu} in
 	Theorem \ref{thm.reprformulaandintegral})
 	\begin{equation} \label{eq.touselimit2}
 	\frac{2}{\theta}\cdot\int_{\theta^{-n}}^\infty\frac{\mu\big(\Omega(0,r)\big)}{r^2}\,\d r
 	 + \eta_n^{1-1/p} < \e \quad
 	 \text{for every $n\geq n_\e$}.
 	 \end{equation}
 	 On the other hand, for every $x\in\RN\setminus\overline{\Omega(0,\theta^{-n_\e})}$ 
 	 (which is an open neighborhood of $\infty$) non belonging to $F$, 
 	 there exists a (unique) $n \geq n_\e\geq n_0$ such that
 	 $$x\in\Omega_n\setminus\bigcup_{k\in J_n}\Omega(x_{k,n},r_{k,n})
 	 \subseteq\{z\in\RN: I_2(z) < \e_n\};$$
 	 as a consequence, by combining \eqref{eq.touselimit} with \eqref{eq.touselimit2} we conclude that
 	 $$0\leq u_0 - u(x) < \e \quad 
 	 \text{for every $x\in \RN\setminus F$ with $\Gamma_0(x)\leq \theta^{-n_\e}$}.$$
 	 This shows that \eqref{eq.limitoutF} holds true, and the proof is complete.
  \end{proof}
  Now we have established
  Theorem \ref{lem.mainpsetLLthin},
  we can finally prove Theorem \ref{thm.mainpsetthin}.
  \begin{proof} [Proof (of Theorem \ref{thm.mainpsetthin}).]
  We demonstrate the following
  equivalent fact: \emph{if $F\subseteq\RN$ is $\LL$-thin at infinity,
  then $F$ is $p_\LL$-bounded \emph{for any $p > 1$}.}
  \medskip
  
  To this end, we let $F\subseteq\RN$ be $\LL$-thin at infinity and, by contradiction,
  we suppose that $F$ \emph{is} $\overline{p}_\LL$-unbounded for a certain 
  $\overline{p}> 1$. 
   If $u\in\subHb(\RN)$
   is fixed, we infer from Lemma \ref{lem.mainpsetLLthin} the existence
   of a $\overline{p}_\LL$-bounded set $F_0\subseteq\RN$ such that
   \begin{equation} \label{eq.limitF0touse}
    \lim_{\begin{subarray}{c}
   x\to\infty \\
   x\notin F
   \end{subarray}}u(x) = \sup_{\RN}u;
   \end{equation}
   moreover, since $F_0$ is $\overline{p}_\LL$-bounded but $F$ is not, then 
   $F\setminus F_0\subseteq F$ \emph{is $\overline{p}_\LL$-unbounded}. 
   In particular, $F\setminus F_0$ is non-void and unbounded
   (see (2) and (4) in Remark \ref{rem.propertiespset}).
	By combining this last fact with \eqref{eq.limitF0touse}, we then obtain
	$$
    \lim_{\begin{subarray}{c}
   x\to\infty \\
   x\in F\setminus F_0
   \end{subarray}}u(x) = \sup_{\RN}u, 
	$$	
	which obviously implies that
	$$\sup_F u \geq \limsup_{
	 \begin{subarray}{c}
	  x\to\infty \\
	  x\in F
	  \end{subarray}}u(x)
	  \geq \lim_{\begin{subarray}{c}
	   x\to\infty \\
   	x\in F\setminus F_0
   	\end{subarray}}u(x) = \sup_{\RN} u.$$
   	Owing to Corollary \ref{cor.FisnotLLthin}, we conclude that
   	$F$ is $\LL$-large at infinity,
   	which is in contradiction with our assumption on $F$. This ends the proof.
  \end{proof}
  \section{$\Gamma$-cones} \label{sec.suffcondCone}
   {The present section is aimed 
   to demonstrate the criterion
   for $p_\LL$-unboundedness contained in
   Theorem \ref{thm.GammaconeIntro}.}   
   {To this end, as anticipated in the Introduction,
   we need to require
   our PDOs $\LL$ to satisfy another additional assumption}:
   \begin{itemize}
    \item[(D)] there exist two constants 
    $\alpha',\alpha'' > 2$, with $\alpha'<\alpha''$, such that
    \begin{equation} \label{eq.doublingreverseweak} 
	  \alpha'\,\big|\Omega(x,r)\big|
     \leq \big|\Omega(x,2r)\big|
     \leq \alpha''\,\big|\Omega(x,r)\big|
    \end{equation}
    for every $x\in\RN$ and every $r > 0$
	(here and throughout, $|A|$ indicate the standard $N$-di\-men\-sional
	Lebesgue measure in $\RN$ of a Borel set $A\subseteq\RN$).     
  \end{itemize}
  Roughly put, assumption (D) represents a 
   \emph{global} doubling and reverse doubling
   condition for the $N$-volume of $\Gamma$-balls; as we shall see in the next
   Section \ref{sec.homoperator}, such a condition is fulfilled
   when \emph{homogeneous} H\"ormander operators are involved.
   \begin{remark} \label{rem.extendeddoubling}
    It is not difficult to recognize that
    \eqref{eq.doublingreverseweak} in assumption (D) implies
    the following crucial fact: \emph{there exists a constant $\alpha \geq 1$ such that}
    \begin{equation} \label{eq.doublingandreverseBIS}
     \frac{1}{\alpha}\,\bigg(\frac{R}{r}\bigg)^{p}\,\big|\Omega(x,r)\big|
     \leq \big|\Omega(x,R)\big|
     \leq \alpha\,\bigg(\frac{R}{r}\bigg)^{q}\,\big|\Omega(x,r)\big|
    \end{equation}
    \emph{for every $x\in\RN$ and every $0<r<R$, where}
    \begin{equation} \label{eq.pqgreaterthan1}
    p := \log_2(1/\alpha') > 1 \qquad \text{and} \qquad  q = \log_2(\alpha'') > 1.
    \end{equation}	
	As will be clear from the sequel, the r\^ole of assumption
    (D) is only to guarantee the validity of 
    \eqref{eq.doublingandreverseBIS}
    \emph{with $p > 1$}: in fact, in what follows
    we shall only use this relation,
    which could also be taken as an assumption (in place of
    \eqref{eq.doublingreverseweak}). \medskip
    
     Notice that, if
	\eqref{eq.doublingandreverseBIS} holds true
	(for some $\alpha \geq 1$ and $p,\,q > 1$), by taking $R = 2\,r$ one re-obtains
    \eqref{eq.doublingreverseweak} with $\alpha' = \alpha/2^p$ and $\alpha'' = 2^q\,\alpha$;
	 however, if we do not have any information on the value of $\alpha$, we cannot
	 expect that $\alpha' > 2$. Thus, in some sense,
	 the validity of \eqref{eq.doublingandreverseBIS} is a \emph{weaker
	 assumption} if compared to (D).
   \end{remark}
   {With assumption (D) at hand, the proof
   of Theorem
   \ref{thm.GammaconeIntro} will easily follow
   by combining
   Remark \ref{rem.propertiespset} with the next
   non-trivial result.}   
   \begin{theorem} \label{thm.GammaCone}
    Let $K\subseteq\RN$ be a $\Gamma$-cone, according
    to Definition \ref{defi.Gammacone}.
    Then $K$ \emph{is $p_\LL$-unbounded} \emph{(}for the 
    same $p > 1$ in \eqref{eq.pqgreaterthan1}\emph{)}.
   \end{theorem}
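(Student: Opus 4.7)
The plan is to argue by contradiction. Assume $K$ is $p_\LL$-bounded by a countable cover $\mathcal{F} = \{\Omega(x_n,r_n)\}_n$ with $S := \sum_n(r_n/\gamma(0,x_n))^p < \infty$; I shall derive $S = +\infty$. The whole argument is driven by the \emph{lower} doubling bound in \eqref{eq.doublingandreverseBIS}, which carries the same exponent $p$ as in the statement; the role of hypothesis (D) is precisely to force $p > 1$.

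\textbf{Normalization of the $\Gamma$-cone family.} Since $\Omega(z_j,R) \subseteq \Omega(z_j,R_j) \subseteq K$ whenever $R \leq R_j$, I may shrink each $R_j$ to $\min\{R_j,\, c_*\,\gamma(0,z_j)\}$ for a sufficiently small $c_* > 0$, chosen in terms of the quasi-triangle constant $\c$ of $\gamma$ (cf.\ Remark \ref{rem.assumptionGdistance}). Combining with the hypothesis $\liminf_j R_j/\gamma(0,z_j) > 0$ and $\|z_j\| \to \infty$, I then pass to a subsequence to arrange
\begin{equation*}
c_0 \,\leq\, R_j/\gamma(0,z_j) \,\leq\, c_* \qquad \text{and} \qquad \gamma(0,z_{j+1})/\gamma(0,z_j) \geq \lambda,
\end{equation*}
for some $0 < c_0 \leq c_*$ and a separation factor $\lambda > 1$ to be chosen below.

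\textbf{Per-ball contribution.} Set $I_j := \{n : \Omega(x_n,r_n) \cap \Omega(z_j,R_j) \neq \varnothing\}$ and $I_j^s := \{n \in I_j : r_n \leq R_j\}$. Iterating the quasi-triangle inequality one gets, for every $n \in I_j$, the proximity estimate $\gamma(0,x_n) \leq (\c + 2\c^2 c_*)\gamma(0,z_j) + 2\c^2 r_n$, so any $n$ with $r_n > R_j$ already forces $r_n/\gamma(0,x_n)$ to exceed a positive constant $\delta_1 = \delta_1(\c,c_0,c_*)$. For $n \in I_j^s$ the lower bound in \eqref{eq.doublingandreverseBIS} gives $|\Omega(x_n,r_n)| \leq \alpha(r_n/R_j)^p |\Omega(x_n,R_j)|$, while $\Omega(x_n,R_j) \subseteq \Omega(z_j,\,\c(1+2\c)R_j)$ and the upper bound of \eqref{eq.doublingandreverseBIS} yield $|\Omega(x_n,R_j)| \leq C_1 |\Omega(z_j,R_j)|$. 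Since the pieces of $\mathcal{F}$ cover $\Omega(z_j,R_j)$, summing and then inserting the proximity estimate produces a $\delta > 0$ independent of $j$ with
\begin{equation*}
\sum_{n \in I_j}\bigl(r_n/\gamma(0,x_n)\bigr)^p \geq \delta.
\end{equation*}

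\textbf{Disjointness and conclusion.} Because $S < \infty$ forces $r_n/\gamma(0,x_n) \to 0$, the proximity estimate also yields $r_n \leq c'_1\, \gamma(0,z_{j'})$ for every $n \in I_{j'}$ with $j'$ large. If such an $n$ additionally lies in $I_j$ with $j < j'$, chaining the quasi-triangle inequality gives $\gamma(z_j,z_{j'}) \leq \c^2(2r_n + R_j + R_{j'}) \leq C_2\, c_*\, \gamma(0,z_{j'})$, which combined with the reverse bound $\gamma(z_j,z_{j'}) \geq \gamma(0,z_{j'})/\c - \gamma(0,z_j)$ and the smallness of $c_*$ controls $\gamma(0,z_{j'})/\gamma(0,z_j)$ by a constant $\Lambda = \Lambda(\c,c_*)$. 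Choosing $\lambda > \Lambda$ in the normalization step therefore makes the $I_j$'s pairwise disjoint from some index on, so $S \geq \sum_j \delta = +\infty$, the desired contradiction. The main obstacle is the volumetric step of the per-ball contribution: the argument closes only because the lower doubling exponent in \eqref{eq.doublingandreverseBIS} is strictly greater than $1$, which is exactly the content of assumption (D) and the reason the theorem singles out the $p$ provided by \eqref{eq.pqgreaterthan1}.
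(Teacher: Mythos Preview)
Your argument is correct and follows essentially the same route as the paper's proof: you normalize the $\Gamma$-cone family (shrinking radii to a small multiple of $\gamma(0,z_j)$ and spacing out the centers geometrically), use the lower doubling bound in \eqref{eq.doublingandreverseBIS} together with the upper bound to compare volumes and extract a uniform per-ball contribution, and then show the index sets $I_j$ are eventually pairwise disjoint via the quasi-triangle inequality. The paper carries out the same three steps (its Claims I--III) directly rather than by contradiction, introducing the threshold set $\mathcal{A}_\e$ where you instead invoke $r_n/\gamma(0,x_n)\to 0$; your ``$j'$ large'' reduction is precisely the content of the paper's Claim I, and your proximity and disjointness computations mirror its Claims II and III.
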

   \begin{proof}
    {Let $\mathcal{F} = \{\Omega(z_j,R_j\}_{j\in J}$
    be a family of $\Gamma$-balls contained in
    $K$ satisfying (i) and (ii) in Definition \ref{defi.Gammacone}.   
    Moreover, let
    $\mathbf{c}$ be the constant appearing in the pseudo-triangle inequality
	for $\gamma$ (see Remark \ref{rem.assumptionGdistance}) and let
	$M := 4\mathbf{c}^2 > 4$.}
	
	{Since $\gamma_0(z) = 1/\Gamma_0(z)\to \infty$ as 
	$\|z\|\to\infty$ (see assumption (FS)-(d)), pro\-per\-ties (i) and (ii)
	of $\mathcal{F}$ imply the existence of
	an increasing
	sequence $\{k_j\}_{j\in\N}$ of natural numbers 
	and a real $\delta\in (0,1/M)$ such that} \medskip
	
	(a)\,\,$\gamma_0(z_{k_{j+1}}) > M^2\,\gamma_0(z_{k_j})$ 
	for every $j\in\N$; \medskip
	
	(b)\,\,$R_{k_j}\geq \delta\,\gamma_0(z_{k_j})$ for every $j\in\N$. \medskip
	
	\noindent We then set, for every natural $j$,
	\begin{equation} \label{eq.defiyjrhoj}
	 y_j := z_{k_j}, \qquad \rho_j := \delta\,\gamma_0(z_{k_j}), \qquad
	B_j := \Omega(y_j,\rho_j)
	\end{equation}
	and we consider the set $F_0$ defined as follows:
	$$F_0 := \bigcup_{j = 1}^\infty B_j.$$
	Since, by (b), $R_{k_j}\geq \rho_j$, we derive that
  $B_j \subseteq \Omega(z_{k_j},R_{k_j})\subseteq K$ for every
	$j\in\N$; hence, $F_0\subseteq K$. 
	As a consequence, to prove that $K$ is $p_\LL$-unbounded
	it suffices to show that $F_0$ is $p_\LL$-unbounded
	(for the same $p > 1$ appearing in \eqref{eq.pqgreaterthan1}).
	
	To this end, we choose a sequence
	$\{D_n = \Omega(x_n,r_n)\}_{n}$ of $\Gamma$-balls such that
	$$F_0 \subseteq \bigcup_{n = 1}^\infty D_n$$
	and we prove that, if $p$ is as in \eqref{eq.pqgreaterthan1}, 
	one has (see Definition \ref{def.pset})
	\begin{equation} \label{eq.toproveF0not}
	 \sum_{n = 1}^\infty \big(\Gamma_0(x_n)\,r_n\big)^p =
	 \sum_{n = 1}^\infty \bigg(\frac{r_n}{\gamma_0(x_n)}\bigg)^p = \infty.
	\end{equation}
	Let then $\e \in (0,1/M)$ be fixed and let 
	$\mathcal{A}_\e\subseteq\N$ be defined as follows:
	$$\mathcal{A}_\e := \bigg\{n\in\N:\frac{r_n}{\gamma_0(x_n)} \geq \e\bigg\}.$$
	If $\mathcal{A}_\e$ is infinite, then the claimed \eqref{eq.toproveF0not} is
	obviously true; we thus assume that the set $\mathcal{A}_\e$ is finite
	and we choose a natural $\overline{n} = \overline{n}_\e$ such that
	\begin{equation} \label{eq.relrngammaxn}
	 \frac{r_n}{\gamma_0(x_n)} \leq \e \quad \text{for every $n\geq \overline{n}$}.
	 \end{equation}
	We now prove some technical facts we shall need
	to show that \eqref{eq.toproveF0not} holds. \medskip
	
	\textsc{Claim I:} There exists a natural $\overline{j} = \overline{j}_\e$ such that
	\begin{equation} \label{eq.inclusionBjDn}
	 \bigcup_{j\geq \overline{j}}B_j \subseteq \bigcup_{n\geq \overline{n}}D_n.
	 \end{equation}
	 In fact, let $k\in\N$ be such that
	 $B_k\cap D_n \neq \varnothing$ for some $n \in 
	 \mathcal{J} := \{1,\ldots,\overline{n} - 1\}$
	 and let $z\in B_k\cap D_n$,
	 By the properties of $\gamma$ in Remark \ref{rem.GGammaquasid} we get 
	 \begin{align*}
	 \rho_k & \stackrel{\eqref{eq.defiyjrhoj}}{=} 
	 \delta\,\gamma_0(y_k)
	 \leq \delta\,\mathbf{c}\,\big(\gamma(0,z)+\gamma(z,y_k)\big) \\[0.2cm]
	 & \,\,\leq\,\, \delta\,\mathbf{c}^2\,
	 \big(\gamma(0,x_n)+\gamma(x_n,z)+\gamma(z,y_k)\big) \\[0.2cm]
	& \,\,\leq\,\, \delta\,\mathbf{c^2}\,\big(\rho_k + 
	\max_{n\in \mathcal{J}}(r_n+\gamma_0(x_n))\big)
	\qquad\qquad \big(\text{since $z\in B_k\cap D_n$}\big);
	\end{align*}		
	 as a consequence, since $\delta\,\mathbf{c}^2 < 1/4 < 1$ (
	 by the choice of $\delta$), we obtain
	$$\dsy \rho_k \leq \frac{\max_{n\in \mathcal{J}}(r_n+\gamma_0(x_n))}
	{1-\delta\,\mathbf{c}} =: \tau.$$
	On the other hand, 
    since $\rho_j = \delta\,\gamma_0(y_j)\to\infty$ as 
    $j\to\infty$ (by (b)), it is possible to
	find a natural $\overline{j} = \overline{j}_\e$ such that
	$\rho_j > \tau$ for every $j\geq\overline{j}$; hence
	$$B_j\cap D_n = \varnothing \quad \text{for every $j \geq \overline{j}$
	and every $n < \overline{n}$}.$$
	By taking into account that $\{D_n\}_n$ is a cover
	of $F_0$, we obtain \eqref{eq.inclusionBjDn}. \medskip
	
  \textsc{Claim II:} If $\overline{j}\in\N$ is as in 
  \eqref{eq.inclusionBjDn}, we define
  $$P_j := \{n\geq\overline{n}: B_j\cap D_n \neq \varnothing\}.$$
  Then the following fats hold true: \medskip
  
  $(\star)$\,\,$\dsy \frac{1}{M} \leq \frac{\gamma_0(x_n)}{\gamma_0(y_j)} \leq M$ 
  for every $j\geq\overline{j}$
  and every $k\in P_j$; \medskip
  
  $(\star\star)$\,\,$P_i\cap P_j = \varnothing$ if $i,j\geq 
  \overline{j}$ and $i\neq j$. \bigskip
  
  \noindent As for $(\star)$ we observe that, if 
  $n\in P_j$ (for some $j\geq \overline{j}$)
  and if $z\in B_j\cap D_n\neq\varnothing$, by the properties of $\gamma$
  in Remark \ref{rem.GGammaquasid} (and the choice of $M$ and $\e$) we have
  \begin{align*}
   \gamma_0(x_n) & = \gamma(0,x_n) \leq \mathbf{c}\,
   \big(\gamma(x_n,z)+\gamma(0,z)\big) \\[0.2cm]
  &  \leq \mathbf{c}\,\big(\gamma(x_n,z) + \mathbf{c}\,
  \big(\gamma(0,y_j) + \gamma(y_j,z)\big)\big) \\[0.2cm]
   & \leq \mathbf{c}^2\,\big(r_n + \rho_j + \gamma_0(y_j)\big) 
   \qquad\qquad\quad
    \big(\text{since $z\in B_j\cap D_n$}\big) \\[0.2cm]
   & \leq \mathbf{c}^2\,\big((1+\delta)\,\gamma_0(y_j)+\e\,
   \gamma_0(x_n)\big) \qquad
		\big(\text{see \eqref{eq.defiyjrhoj} and \eqref{eq.relrngammaxn}}\big) \\
   & \leq 2\mathbf{c}^2\,\gamma_0(y_j) + \frac{1}{2}\,\gamma_0(x_n).
  \end{align*}
  From this, we derive that
  $$\frac{\gamma_0(x_n)}{\gamma_0(y_j)} \leq 4\mathbf{c}^2 = M,$$
  which is precisely the second inequality in 
  $(\star)$; by arguing analogously, one can
  prove the first one too. We now turn to prove $(\star\star)$. 
  
  To this end, we argue by contradiction and we assume the existence
  of two indexes $i,j\geq \overline{j}$, with (to fix ideas) $j > i$, such that
  $P_i\cap P_j \neq \varnothing$. If $n \geq \overline{n}$ is a natural
  number belonging to such an intersection,
  from $(\star)$ we derive that
  $$\frac{\gamma_0(y_j)}{\gamma_0(y_i)} = 
  \frac{\gamma_0(y_j)}{\gamma_0(x_n)}\cdot\frac{\gamma_0(x_n)}{\gamma_0(y_i)}
  \leq M^2,$$
  which is in contradiction with (a). \medskip
  
  \textsc{Claim III:} There exists a real constant $\zeta > 0$ such that
  \begin{equation} \label{eq.mainestimdoubling}
   \bigg(\frac{\gamma_0(y_j)}{r_n}\bigg)^p\cdot\frac{|D_n|}{|B_j|}
   \leq \zeta
   \quad \text{for any $j\geq\overline{j}$ and $n\in P_j$}.
  \end{equation}
  In fact, by using $(\star)$ and the pseudo-triangle inequality
  for $\gamma$, it is possible to find a real $\zeta_1 > 1$, only depending
  on $M$ and $\mathbf{c}$, such that
  \begin{equation} \label{eq.touseinclusionGammaballs}
   \Omega(x_n,\gamma_0(y_j))\subseteq \Omega(y_j,\zeta_1\gamma_0(y_j))
	\quad \text{for any $j\geq\overline{j}$ and $n\in P_j$}.
	\end{equation}
	From this, by applying the second inequality in
	\eqref{eq.doublingandreverseBIS}, we obtain
	\begin{align*}
	 \frac{|D_n|}{|B_j|}
	 & = \frac{\big|\Omega(x_n,r_n)\big|}{\big|\Omega(y_j,\rho_j)\big|}
	 \stackrel{\eqref{eq.defiyjrhoj}}{=} \frac{\big|\Omega(x_n,r_n)\big|}
	 {\big|\Omega(y_j,(\delta/\zeta_1)\cdot(\zeta_1\gamma_0(y_j))\big|} \\[0.2cm]
	 & \leq \alpha\,\bigg(\frac{\zeta_1}{\delta}\bigg)^q
	 \cdot\frac{\big|\Omega(x_n,r_n)\big|}
	 {\big|\Omega(y_j,\zeta_1\gamma_0(y_j))\big|} 
	 \qquad \big(\text{since $\delta/\zeta_1 < 1$}	\big)\\[0.2cm]
	 &  \leq
	 \alpha\,\bigg(\frac{\zeta_1}{\delta}\bigg)^q
	 \cdot\frac{\big|\Omega(x_n,r_n)\big|}
	 {\big|\Omega(x_n,\gamma_0(y_j))\big|} := \big(\bigstar\big)
	  \qquad \big(\text{by \eqref{eq.touseinclusionGammaballs}}\big)
	\end{align*}
	On the other hand, since $n\geq\overline{n}$, again by $(\star)$ we have
	$$\frac{r_n}{\gamma_0(y_j)} \leq \e\cdot\frac{\gamma_0(x_n)}
	{\gamma_0(y_j)} \leq \e\,M < 1;$$
	we are then entitled to use the reverse doubling
	condition \eqref{eq.doublingandreverseBIS}, which gives
	$$\big(\bigstar\big) \leq \zeta\,\bigg(\frac{r_n}{\gamma_0(y_j)}\bigg)^p$$
	for some universal constant $\zeta$ not depending on $j$ and $n$. \bigskip
	
	Now we have established all these claims, we can easily achieve the proof of
	the needed \eqref{eq.toproveF0not}.
	Indeed, by $(\star)$ and $(\star\star)$ in Claim II, we have
	\begin{align*}
	 \sum_{n\geq \overline{n}}\bigg(\frac{r_n}{\gamma_0(x_n)}\bigg)^p & 
	 \geq \sum_{j\geq \overline{j}}\sum_{n\in P_j}\bigg(\frac{r_n}
	 {\gamma_0(x_n)}\bigg)^p 
	 \geq \frac{1}{M^p}\,\sum_{j\geq \overline{j}}\sum_{n\in P_j}
	 \bigg(\frac{r_n}{\gamma_0(y_j)}\bigg)^p
	 =: \big(\bigstar\big)
	\end{align*}
	On the other hand, if $j\geq\overline{j}$ is fixed, the family
	$\{D_n\}_{n\in P_j}$ is a cover of the set $B_j$
	(see \eqref{eq.inclusionBjDn}); as a consequence, by exploiting estimate
	\eqref{eq.mainestimdoubling}, we finally obtain
	\begin{align*}
	 \big(\bigstar\big) & \geq \big(\zeta\,M^p\big)^{-1}\,\sum_{j\geq\overline{j}}
	 \frac{1}{|B_j|}\,
	 \sum_{n\in P_j}|D_n| \\[0.2cm]
	 & \geq \big(\zeta\,M^p\big)^{-1}\,\sum_{j\geq\overline{j}} 1 = \infty.
	\end{align*}
	This is precisely the desired \eqref{eq.toproveF0not}, and the proof is complete.
  \end{proof}
  With Theorem \ref{thm.GammaCone} at hand, we can now
  prove Theorem \ref{thm.GammaconeIntro}.
  \begin{proof} [Proof (of Theorem \ref{thm.GammaconeIntro}).]
   Let $F,\,K$ be as in the statement of the theorem. Since,
   by assumption, $K$ is a $\Gamma$-cone, we infer from
   Theorem \ref{thm.GammaCone} that $K$ is $p_\LL$-unbounded
   (for the same $p$ as in \eqref{eq.pqgreaterthan1}); on the other hand,
   as $K\subseteq F$, Remark \ref{rem.propertiespset}-(2.)
   shows that also $F$ is $p_\LL$-unbounded, and
   the proof is complete.
  \end{proof}
  \section{The case of homogeneous H\"ormander operators}  \label{sec.homoperator}
  The aim of this final section is to show that
  any H\"ormander's operator sum of squares of \emph{homogeneous
  vector fields} satisfies all the assumptions
  (H1)-to-(H3), (FS), (G), (L) and (D) introduced in the previous sections. \medskip
  
  To this end, we fix once and for all a family
  $\mathcal{X} = \{X_1,\ldots,X_m\}$ of linearly independent
  smooth vector fields on $\RN$, with $N\geq 3$, satisfying
  assumptions (I) and (II) in the Introduction. Moreover, we let
  $$Q := \sum_{j = 1}^N\sigma_j \geq 3,$$
  be the homogeneous dimension of $\RN$ with respect to the family of dilations
  $$\delta_\lambda: \RN\to\RN, \quad \delta_\lambda(x) = (\lambda^{\sigma_1}x_1,
  \ldots,\lambda^{\sigma_N}x_N).$$
  We then denote by $\LL$ the operator naturally associated 
  with $\mathcal{X}$, that is,
  $$\LL = \sum_{j = 1}^m X_j^2.$$
 \paragraph{Assumptions (H1)-to-(H3).} 
  It is easy to recognize that $\LL$ satisfies all the structural assumptions
 (H1)-to-(H3) introduced in Section \ref{sec:mainassnot}: indeed, a direct 
 computation shows that
 $\LL$ takes the form \eqref{eq:PDOformintro}, with $V \equiv 1$ and
 $$A(x) = S(x)\cdot S(x)^T, 
 \quad \text{where $S(x) = \big(X_1(x)\cdots X_m(x)\big)$};$$
 as a consequence, $\LL$ is degenerate-elliptic. Moreover, 
 the validity of H\"ormander's Rank Condition
 easily implies that
 $\LL$ is non-totally degenerate and that $\LL$
 is $C^\infty$-hypoelliptic on every open subset of $\RN$
 (by H\"ormander's Theorem). 
 
 \paragraph{Assumption (FS).}
  We now prove that $\LL$ also satisfies assumption (FS). 
  First of all, by exploiting
  \cite[Theorem 1.1]{BiagiBonfLast}, we get the existence
  of a function $\Gamma(x;y)$, defined
  out of the diagonal of $\RN\times\RN$, such that
  \begin{itemize}
   \item $\Gamma$ is smooth and strictly positive on its domain of definition;
   \item $\Gamma(x;y) = \Gamma(y;x)$ for every $x,y\in\RN$ with $x\neq y$;
   \item for every fixed $x\in\RN$, $y\mapsto \Gamma(x;y) = 
   \Gamma_x(y)\in L^1_{\loc}(\RN)$ and
   $$\int_{\RN}\Gamma(x;y)\,\LL\varphi(y)\,\d y = -\varphi(x) \quad
   \text{for every $\varphi\in C_0^\infty(\RN,\R)$};$$
   \item $\Gamma(x;\cdot)$ vanishes at infinity (uniformly for $x$ in compact sets);
   \item $\Gamma$ has the (joint) homogeneity property
   \begin{equation} \label{eq.crucialGammahom}
    \Gamma(\dela(x);\dela(y)) = \lambda^{2-Q}\Gamma(x;y) \quad 
    \text{for all $x,y\in\RN$}.
   \end{equation}
  \end{itemize}
  Furthermore, by using the results in the very recent paper
  \cite{BiBoBra} (see, precisely, Theorem 1.3-(V)), we know that
  $\Gamma(x;\cdot)$ has a pole at $x$, i.e.,
  $$\lim_{y\to x}\Gamma(x;y) = \infty \qquad\text{for any fixed $x\in\RN$}.$$
  Summing up, $\LL$ satisfies assumption (FS).
  
  \paragraph{Assumption (G).} In this paragraph 
  prove that
	$\LL$ also satisfies assumption (G). To this end,
  we first need to remind some results
  concerning the so-called
  control distance associated with the family $\mathcal{X} = \{X_1,\ldots,X_m\}$. \medskip
  
  Let $f:[0,T]\to\RN$ be a Lipschitz curve. We say that $f$ is $\mathcal{X}$-subunit if
  $$\langle \dot f(t),\xi\rangle \leq \sum_{j = 1}^m \langle X_j(f(t)), \xi\rangle^2,
  \quad \text{for a.e.\,$t\in [0,T]$ and every $\xi\in\RN$}.$$
  Denoting by $\mathcal{S}(\mathcal{X})$ the set of all $X$-subunit curves, we can define
  $$d_\mathcal{X}(x,y) := \inf\bigg\{T > 0\,:\,\text{$\exists\,\,
  f\in\mathcal{S}(\mathcal{X)}$ such that $f(0) = x$ and $f(T) = y$}\bigg\}.$$
  Since $X_1,\ldots,X_m$ satisfy H\"ormander's Rank Condition, the function
   $d_{\mathcal{X}}$
  is finite for every $x,y\in\RN$ and it defines a \emph{distance} on $\RN$, which is usually
  referred to as the \emph{control distance} associated with $\mathcal{X}$
  (see, e.g., \cite[Chapter 19]{BLUlibro} and the references therein).
  Moreover, since the $X_j$s are $\dela$-homogeneous of degree $1$,
  \begin{equation} \label{eq.dXishomog}
   d_\mathcal{X}(\dela(x),\dela(y)) = \lambda\,d_\mathcal{X}(x,y).
  \end{equation}
  For every fixed $x\in\RN$ and every $r > 0$, we indicate by
  $B_{\mathcal{X}}(x,r)$ the (open) 
  $d_\mathcal{X}$-ball with centre $x$ and radius $r$, that is,
  $$B_{\mathcal{X}}(x,r) := \{y\in\RN: d_\mathcal{X}(x,y) < r\}.$$
  By \eqref{eq.dXishomog}, it is easy to see that $d_\mathcal{X}$-balls are preserved
  by dilations, that is,
  \begin{equation} \label{eq.dxballshomogeneous}
   \dela\big(B_{\mathcal{X}}(x,r)\big) = B_{\mathcal{X}}(\dela(x),\lambda r);
   \end{equation}
  from this, by using a deep result by
  Nagel, Stein and Wainger \cite{NagelSteinWainger}, one obtains
  the following \emph{global} estimates for the $N$-volume of $d_\mathcal{X}$-balls
  (see also \cite[Thm.\,B]{BiBoBra}).	
  \begin{theorem} \label{main.ThmNSW}
   There exist a real constant $c_1 \geq 1$ such that
   \begin{equation} \label{eq.estimlambdaBx}
    \frac{1}{c_1}\,\sum_{j = N}^Q F_j(x)\,r^j 
    \leq \big|B_{\mathcal{X}}(x,r)\big| \leq c_1\,\sum_{j = N}^Q F_j(x)\,r^j
   \end{equation}
   for every $x\in\RN$ and every $r > 0$. Here, 
   the functions $F_j$ are positive continuous functions
   and, for every $j$, $F_j$ is $\dela$-homogeneous of degree $Q-j$.
  \end{theorem}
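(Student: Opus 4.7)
The plan is to combine the classical \emph{local} Nagel--Stein--Wainger estimate with the $\dela$-homogeneity of the vector fields $X_1,\ldots,X_m$. First, I would construct the functions $F_j$ explicitly: for every admissible $N$-tuple $I = (I_1,\ldots,I_N)$ of iterated commutators of $X_1,\ldots,X_m$ with weighted degrees $\ell_1,\ldots,\ell_N\geq 1$ summing to $d(I) = j\in\{N,\ldots,Q\}$, let $\lambda_I(x)$ denote the determinant of the $N\times N$ matrix whose columns are the coefficients of the corresponding commutators evaluated at $x$, and set
$$F_j(x) \,:=\, \sum_{d(I) = j}|\lambda_I(x)|.$$
These are continuous non-negative functions satisfying $\sum_{j = N}^Q F_j(x) > 0$ at every $x\in\RN$, thanks to H\"ormander's Rank Condition (some $\lambda_I(x)\neq 0$ at each $x$).

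Next I would verify the homogeneity. A direct computation shows that, if $Y = \sum_i a_i\,\partial_{x_i}$ is $\dela$-homogeneous of degree $\ell$ as a differential operator, then $a_i(\dela(x)) = \lambda^{\sigma_i - \ell}\,a_i(x)$. Since each $X_k$ is $\dela$-homogeneous of degree $1$, every iterated commutator of weighted length $\ell$ is $\dela$-homogeneous of degree $\ell$; hence, for a tuple $I$ with weights $\ell_1,\ldots,\ell_N$ summing to $j$, the $(i,k)$-entry of the defining matrix scales by $\lambda^{\sigma_i - \ell_k}$, which gives $\lambda_I(\dela(x)) = \lambda^{Q - j}\,\lambda_I(x)$. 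Consequently $F_j$ is $\dela$-homogeneous of degree $Q - j$, as required.

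With the $F_j$'s in hand I invoke the classical NSW theorem \cite{NagelSteinWainger}, which yields the \emph{local} estimate: for every compact $K\subseteq\RN$ there exist $c_K, r_K > 0$ with
$$c_K^{-1}\,\sum_{j = N}^Q F_j(x)r^j \,\leq\, |B_{\mathcal{X}}(x,r)| \,\leq\, c_K\,\sum_{j = N}^Q F_j(x)r^j \quad\text{for $x\in K,\,\,0 < r\leq r_K$.}$$
The crucial observation is that both sides transform identically under the action $(x,r)\mapsto(\dela(x),\lambda r)$: combining \eqref{eq.dxballshomogeneous} with the Jacobian identity $|\det\dela| = \lambda^Q$ gives $|B_{\mathcal{X}}(\dela(x),\lambda r)| = \lambda^Q\,|B_{\mathcal{X}}(x,r)|$, while the homogeneity of $F_j$ just established yields $\sum_j F_j(\dela(x))(\lambda r)^j = \lambda^Q\,\sum_j F_j(x)r^j$; hence the ratio between the two sides is $\{\dela\}$-invariant.

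Fix now any $\dela$-homogeneous norm $\|\cdot\|_\star$ on $\RN$ and let $K := \{z\in\RN : \|z\|_\star\leq 1\}$, which is compact and produces NSW constants $c_K, r_K$. Given an arbitrary pair $(x,r)$ with $x\neq 0$ and $r > 0$, set $\lambda := \min\bigl(1/\|x\|_\star,\,r_K/r\bigr) > 0$: then $\hat{x} := \dela(x)$ satisfies $\|\hat{x}\|_\star = \lambda\|x\|_\star\leq 1$ and $\hat{r} := \lambda r\leq r_K$, so the local NSW estimate applies to $(\hat{x},\hat{r})$, and $\{\dela\}$-invariance of the ratio transports the bound back to $(x,r)$ with the same constant $c_K$. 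The case $x = 0$ follows from pure homogeneity: $|B_{\mathcal{X}}(0,r)| = r^Q|B_{\mathcal{X}}(0,1)|$ and $\sum_j F_j(0)r^j = F_Q(0)r^Q$ (since $F_j(0) = 0$ for $j < Q$ by positive-degree homogeneity), both strictly positive, the latter thanks to the H\"ormander condition at the origin. The main technical obstacle, in my view, is pinning down the two-sided choice $\lambda = \min(1/\|x\|_\star,\,r_K/r)$ which has to simultaneously push $x$ into $K$ \emph{and} the radius $r$ below $r_K$; once this bookkeeping is carried out, the homogeneity collapses everything into a single constant $c_1 = c_K$.
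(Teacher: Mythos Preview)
Your proposal is correct and follows essentially the same route as the paper's proof: define the $F_j$ as sums of absolute determinants of commutator $N$-tuples, verify their $\delta_\lambda$-homogeneity of degree $Q-j$, invoke the local Nagel--Stein--Wainger estimate on a compact neighborhood of the origin, and globalize via the joint $\lambda^Q$-scaling of both sides under $(x,r)\mapsto(\delta_\lambda(x),\lambda r)$. The only cosmetic differences are that the paper is less explicit about the dilation parameter (it simply asserts a suitable $\lambda$ exists rather than writing your formula $\lambda=\min(1/\|x\|_\star,\,r_K/r)$), and that it explicitly records the consequence $|\lambda_B|\equiv 0$ when $l(B)>Q$ (continuous, negative-degree homogeneous) to justify truncating the NSW sum at $j=Q$, a step you use implicitly.
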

  \begin{proof}
	First of all, we need to introduce some notations borrowed from
	\cite{NagelSteinWainger}
	(see also \cite[Section 4.2]{BramantiBOOK}):
	if $p \in \N$ and 
	$I = (i_1,\ldots,i_p)$ is a multi-index of length $p$ (i.e., $I$ is a vector
	in $\R^p$ with non-negative integer components), we define
	$$X_{I} := \big[X_{i_1}\cdots[X_{i_{p-1}},X_{i_p}]\cdots\big]
	\quad \text{and} \quad |I| := p.$$
	Furthermore, if $B = (I_1,\ldots,I_N)$ is a $N$-tuple
	of multi-indexes, we set
	$$\lambda_B(x) := \det\Big(X_{I_1}(x)\cdots X_{I_N}(x)\Big) \quad
	\text{and} \quad l(B) := \sum_{j = 1}^N|I_j|\geq N.$$
	Finally, if $s$ is any natural number, we denote by $\mathcal{B}_s$ the set of 
	all the possible $N$-tuples $B = (I_1,\ldots,I_N)$ of multi-indexes
	with $|I_j|\leq s$ for every $j = 1,\ldots,N$.
	
	We now observe that, by assumptions (H1) and (H2), the Lie algebra
	generated by $X_1,\ldots,X_m$ is nilpotent of step $s := \sigma_N$; as a consequence,
	if $\mathcal{U}\subseteq\RN$ is any (fixed) bounded and 
	connected open neighborhood of $0$,
	\cite[Theorem 1]{NagelSteinWainger} provides us with a small $r_0 > 0$
	and a real constant $c_1 \geq 1$ such that
	\begin{equation} \label{eq.localestimNSWballs}
	 \frac{1}{c_1}\,\sum_{B\in\mathcal{B}_s}
	\big|\lambda_B(x)\big|\,r^{l(B)}
	 \leq \big|B_{\mathcal{X}}(x,r)\big|
	 \leq c_1\,\sum_{B\in\mathcal{B}_s}
	 \big|\lambda_B(x)\big|\,r^{l(B)}
	\end{equation}
	for every $x\in\mathcal{U}$ and every $r > 0$ such that $r \leq r_0$. 
	
	We claim that,
	as a consequence of the homogeneity of $X_1,\ldots,X_m$,
	estimate \eqref{eq.localestimNSWballs} actually holds for
	every $x\in\RN$ and every $r > 0$. 
	Indeed, 
	if $x\in\RN$ is arbitrary fixed and if $r > 0$, it is possible to choose
	$\lambda = \lambda_{x,r} > 0$ such that 
	$$\text{$x' = \dela(x)\in\mathcal{U}$
	and $r' = \lambda\,r \leq r_0$;}$$
   thus, \eqref{eq.localestimNSWballs}
   holds with $x'$ and $r'$ in place of
   $x$ and $r$. Now, by \eqref{eq.dxballshomogeneous} we have
   \begin{equation} \label{eq.hom_measuredBall}
    \big|B_\mathcal{X}(x',\lambda\,r)\big| = 
    \big|\dela(B_\mathcal{X}(x,r))\big| = \lambda^Q\,\big|B_\mathcal{X}(x,r)\big|;
   \end{equation}
   on the other hand, if $B = (I_1,\ldots,I_N)\in\mathcal{B}_s$,
   the homogeneity of $X_1,\ldots,X_m$ with respect to $\dela$
   implies that (see, e.g., \cite[Corollary 1.3.6]{BLUlibro})
   \begin{equation} \label{eq.hom_lambdaIrlB}
   \begin{split}
    & \big|\lambda_B(x')\big|\,(r')^{l(B)}  = 
    \big|\det\big(X_{I_1}(x')\cdots X_{I_N}(x')\big)\big|\,(r')^{l(B)} \\[0.1cm]
    & \qquad = \det\Big(\lambda^{-|I_1|}\,\dela\big(X_{I_1}(x)\big)\cdots 
    \lambda^{-|I_N|}\,\dela\big(X_{I_N}(x)\big)\Big)\,(\lambda\,r)^{l(B)} \\[0.1cm]
    & \qquad = \lambda^{Q-l(B)}\,\lambda_B(x)\,(\lambda\,r)^{l(B)} 
    = \,\lambda^{Q}\,\lambda_B(x)\,r.
   \end{split}
   \end{equation}
   By combining \eqref{eq.hom_measuredBall} with 
   \eqref{eq.hom_lambdaIrlB}, we conclude that the validity
   of \eqref{eq.localestimNSWballs} for $x'$ and $r'$ implies
   the validity of the same estimate for $x$ and $r$, as claimed.
   
   To complete the demonstration of the theorem we observe that, if
   $B\in\mathcal{B}_s$, the computation carried out in
   \eqref{eq.hom_lambdaIrlB} shows that
   $|\lambda_B|$ is a continuous $\dela$-homogeneous function of degree $Q-l(B)$;
   as a consequence, we have
   $$|\lambda_B| \equiv 0 \quad \text{for every $B\in\mathcal{B}_s$ with $l(B) > Q$}.$$
	Thanks to this last fact, we can write (for $x\in\RN$ and $r > 0$)
	\begin{align*}
	 \sum_{B\in\mathcal{B}_s}\big|\lambda_B(x)|\,r^l(B)
	 & = \sum_{j = N}^Q\underbrace{\bigg(\sum_{\begin{subarray}{c}
	 B\in\mathcal{B}_s \\
	 l(B) = j
	 \end{subarray}}\big|\lambda_B(x)\big|\bigg)}_{=:F_j(x)}r^{j}
	 \equiv \sum_{j = N}^Q F_j(x)\,r^j.
	\end{align*}
	Note that, by definition, any $F_j$ is $\dela$-homogeneous
	of degree $Q-j$ (since
	$|\lambda_B|$ is $\dela$-homogeneous of degree $Q-l(B) = Q-j$
	if $l(B) = j$). This ends the proof.
  \end{proof}
  We now turn to show how $d_\mathcal{X}$ (and the associated balls)
  are related with the fundamental solution $\Gamma$.    
  To this end, we introduce the following functions:	
    \begin{equation} \label{eq.defLambdaE}
   \Lambda(x,r) := \sum_{j = N}^Q F_j(x)\,r^j, \qquad
   E(x,r) := \frac{\Lambda(x,r)}{r^2} = \sum_{j = N}^Q F_j(x)\,r^{j-2}.
  \end{equation}
  \begin{remark} \label{rem.propertiesELambda}
   We list, for future reference, some useful properties 
   of $\Lambda$ and $E$:
   \begin{itemize}
    \item[(a)] For every fixed $x$, both $\Lambda(x,\cdot)$ and
    $E(x,\cdot)$ are strictly increasing on $(0,\infty)$;
    \item[(b)] For every $x\in\RN$ and every $0 < r < R$ we have
    \begin{flalign}
     & \mathrm{(b)_1}\quad\bigg(\frac{R}{r}\bigg)^N\,\Lambda(x,r) \leq \Lambda(x,R) \leq
    \bigg(\frac{R}{r}\bigg)^Q\,\Lambda(x,r) && \label{eq.doublingLambda} \\[0.2cm]
     & \mathrm{(b)_2}\quad\bigg(\frac{R}{r}\bigg)^{N-2}\,E(x,r) \leq E(x,R) \leq
    \bigg(\frac{R}{r}\bigg)^{Q-2}\,E(x,r) \label{eq.doublingE}
    \end{flalign}
    \item[(c)] If $B_{\mathcal{X}}(x,r)\subseteq B_\mathcal{X}(y,\rho)$ for some
    $x,y\in\RN$ and $r,\rho\in(0,\infty)$, we have
    \begin{align} \label{eq.ELambdainclusion}
     \Lambda(x,r) \leq c_1^2\,\Lambda(y,\rho) \quad
     \text{and} \quad 
     E(x,r) \leq c_1^2\,\bigg(\frac{\rho}{r}\bigg)^2\,E(y,\rho).
    \end{align}
    \item[(d)] There exists
	a strictly positive constant $\omega_Q > 0$ such that
	\begin{flalign}
     & \mathrm{(d)_1}\quad\Lambda(0,r) = \omega_Q\,r^Q \qquad\text{for every
     $r > 0$}; \label{eq.Lambda0rrQ} \\[0.2cm]
     & \mathrm{(d)_2}\quad\Lambda(x,r)\geq\omega_Q\,r^Q \qquad\text{for
     every $x\in\RN$ and every $r > 0$}. \label{eq.LambdaxrrQ}
    \end{flalign}
    In fact, since any function $F_j$ appearing
    in \eqref{eq.defLambdaE} is non-negative, continuous
    and $\dela$-homogeneous of degree $Q-j$ (see
    Theorem \ref{main.ThmNSW}), we have \medskip
    
    $(\ast)$\,\,$F_j(0) = 0$ for every $j = N,\ldots,Q-1$; \vspace*{0.02cm}
    
    $(\ast\ast)$\,\,$F_Q(x) = F_Q(0) = \omega_Q\geq 0$ for every $x\in\RN$. \medskip
    
    As a consequence, by the very definition of $\Lambda$
    (see \eqref{eq.defLambdaE}), for every
    $x\in\RN$ and for every $r > 0$ we can write
    $$\Lambda(0,r) = \omega_Q\,r^Q \quad
    \text{and} \quad \Lambda(x,r) \geq \omega_Q\,r^Q,$$
	with $\omega_Q\geq 0$. From this, since \eqref{eq.estimlambdaBx}
	implies that
	$$\omega_Q = \Lambda(0,1) \geq \frac{1}{c_1}\,\big|B_\mathcal{X}(0,1)\big|
	> 0,$$
	we obtain both \eqref{eq.Lambda0rrQ} and \eqref{eq.LambdaxrrQ}.
   \end{itemize}
  \end{remark}
  By combining \cite[Theorem 1.3-(III)]{BiBoBra} with the above
  Theorem \ref{main.ThmNSW}, we
  are able to demonstrate the following key result.
  \begin{theorem} \label{thm.globalSanchez}
	 There exists a real constant $c_2 > 0$ such that
	 \begin{equation} \label{eq.globalSanchez}
	  \frac{1}{c_2}\,\frac{d_\mathcal{X}^2(x,y)}{\Lambda(x,d_\mathcal{X}(x,y))} \leq
	 \Gamma(x;y) \leq c_2\,\frac{d_\mathcal{X}^2(x,y)}{\Lambda(x,d_\mathcal{X}(x,y))} \quad
	 \text{for all $x\neq y$}.
	 \end{equation}
	\end{theorem}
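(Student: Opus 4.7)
The plan is to deduce the stated two-sided bound by chaining two estimates that are essentially available off the shelf. The first ingredient is \cite[Theorem 1.3-(III)]{BiBoBra}, which, under assumptions (I) and (II) on the family $\mathcal{X}$, yields a \emph{global} Sanchez-Calle-type estimate for the fundamental solution of the form
$$\frac{1}{c_3}\,\frac{d_\mathcal{X}^2(x,y)}{\big|B_\mathcal{X}(x,d_\mathcal{X}(x,y))\big|} \leq \Gamma(x;y) \leq c_3\,\frac{d_\mathcal{X}^2(x,y)}{\big|B_\mathcal{X}(x,d_\mathcal{X}(x,y))\big|}$$
for some $c_3 > 0$ and for every pair of distinct points $x,y\in\RN$. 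The fact that this holds globally (rather than on a bounded set) is not automatic in general, but it does hold here thanks to the $\dela$-homogeneity of the vector fields: one can dilate any configuration $(x,y)$ into a fixed bounded region, apply the classical local Sanchez-Calle estimate, and rescale back using \eqref{eq.crucialGammahom} and \eqref{eq.dxballshomogeneous} (which rescale $\Gamma$ and $|B_\mathcal{X}|$ by compatible powers of $\lambda$).

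The second ingredient is Theorem \ref{main.ThmNSW}, whose content is precisely that the volume of the control ball is comparable to the polynomial $\Lambda$: with $r := d_\mathcal{X}(x,y)$,
$$\frac{1}{c_1}\,\Lambda(x,r) \leq \big|B_\mathcal{X}(x,r)\big| \leq c_1\,\Lambda(x,r).$$
Inverting this and substituting into the previous display, we obtain
$$\frac{1}{c_1 c_3}\,\frac{d_\mathcal{X}^2(x,y)}{\Lambda(x,d_\mathcal{X}(x,y))} \leq \Gamma(x;y) \leq c_1 c_3\,\frac{d_\mathcal{X}^2(x,y)}{\Lambda(x,d_\mathcal{X}(x,y))},$$
which is the desired \eqref{eq.globalSanchez} with $c_2 := c_1 c_3$.

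Because both ingredients are ready to use, there is no genuine obstacle in the argument; the only point requiring care is to confirm that \cite[Theorem 1.3-(III)]{BiBoBra} is indeed formulated in the strong global form above, with a constant independent of $x,y\in\RN$, and not only as a local statement on a fixed compact set. This global formulation is precisely what the homogeneity hypothesis (I) buys us, and it is available in \cite{BiBoBra} under exactly the same structural assumptions on $\mathcal{X}$ that we are working with here. Once this is acknowledged, the proof is a one-line combination of the two cited results.
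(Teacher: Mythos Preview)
Your proposal is correct and follows essentially the same approach as the paper: invoke \cite[Theorem 1.3-(III)]{BiBoBra} to get the global two-sided bound for $\Gamma$ in terms of $d_\mathcal{X}^2/|B_\mathcal{X}|$, then replace $|B_\mathcal{X}|$ by $\Lambda$ via Theorem \ref{main.ThmNSW}, obtaining $c_2 = c_1\cdot C$. The only minor point the paper makes explicit that you leave implicit is that the applicability of \cite[Theorem 1.3-(III)]{BiBoBra} uses the standing hypothesis $N\geq 3$.
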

	\begin{proof}
	First of all, since
	we are assuming that
	the operator $\LL$ is defined on some
    space $\RN$ with $N\geq 3$,
	we are entitled to apply \cite[Theorem 1.3-(III)]{BiBoBra}:
	as a consequence, for every $x, y\in\RN$ with $x\neq y$ we have
	\begin{equation} \label{eq.estimateBiBoBra}
	 C^{-1}\frac{d_{\XX}(x,y)^{2}}{\big|B_{\XX}\big(x,d_{\XX}(x,y)\big)\big|}\leq
   \Gamma (x;y)\leq C\,\frac{d_{\XX}(x,y)^{2}}{\big\vert B_{\XX}
   (x,d_{X}(x,y))\big\vert},
	\end{equation}
	where $C \geq 1$ is a suitable structural constant. 
	By combining \eqref{eq.estimateBiBoBra} with the global estimate
   \eqref{eq.estimlambdaBx} for $|B_{\XX}(x,r)|$
	(holding true for any $r > 0$),
	 we immediately
	obtain the desired \eqref{eq.globalSanchez}
	(with $c_2 := C\cdot c_1$). This ends the proof.
   \end{proof}
   With Theorem \ref{thm.globalSanchez} at hand,
   we can now prove that $\LL$
   fulfills assumption (G).
   \begin{proposition} \label{prop.assumptionGhom}
	 $\LL$ satisfies assumption \emph{(G)} introduced in Section \ref{sec:LLthinpset}.
	\end{proposition}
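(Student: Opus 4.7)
By Remark \ref{rem.assumptionGdistance}, to verify assumption (G) it suffices to show that the function $\gamma = 1/\Gamma$ satisfies a pseudo-triangle inequality on $\RN$, i.e., there exists $\mathbf{c} > 1$ with
\begin{equation*}
 \gamma(x,y) \leq \mathbf{c}\bigl(\gamma(x,z) + \gamma(z,y)\bigr) \qquad \text{for all } x,y,z \in \RN.
\end{equation*}
The plan is to reduce this to a pseudo-triangle inequality for the map $(x,y)\mapsto E(x,d_\XX(x,y))$, where $E$ and $\Lambda$ are the functions introduced in \eqref{eq.defLambdaE}, and then to exploit the ``change-of-center'' and doubling/reverse-doubling estimates collected in Remark \ref{rem.propertiesELambda} together with the ordinary triangle inequality for the control distance $d_\XX$.

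\textbf{Step 1 (rewriting $\gamma$).} Theorem \ref{thm.globalSanchez} gives the two-sided bound
\begin{equation*}
 \frac{1}{c_2}\,E\bigl(x,d_\XX(x,y)\bigr) \leq \gamma(x,y) \leq c_2\,E\bigl(x,d_\XX(x,y)\bigr),
\end{equation*}
since $E(x,r) = \Lambda(x,r)/r^2$. Consequently, it is enough to prove that there exists a constant $\kappa > 0$ such that, setting $d := d_\XX(x,y)$, $d_1 := d_\XX(x,z)$, $d_2 := d_\XX(z,y)$,
\begin{equation*}
 E(x,d) \leq \kappa\bigl(E(x,d_1) + E(z,d_2)\bigr) \qquad \text{for all } x,y,z\in\RN.
\end{equation*}

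\textbf{Step 2 (splitting by the larger leg).} From the triangle inequality $d \leq d_1 + d_2 \leq 2\max\{d_1,d_2\}$ we distinguish two cases. If $d_1 \geq d_2$, by monotonicity of $E(x,\cdot)$ (Remark \ref{rem.propertiesELambda}(a)) and by the doubling estimate \eqref{eq.doublingE} we get
\begin{equation*}
 E(x,d) \leq E(x,2d_1) \leq 2^{Q-2}\,E(x,d_1).
\end{equation*}
If $d_2 > d_1$, we analogously obtain $E(x,d) \leq 2^{Q-2}\,E(x,d_2)$, and the remaining issue is to replace the base point $x$ by $z$ in the right-hand side. Since $d_\XX(x,z) = d_1 < d_2$, the triangle inequality for $d_\XX$ yields $B_\XX(x,d_2) \subseteq B_\XX(z,2d_2)$; hence Remark \ref{rem.propertiesELambda}(c) gives
\begin{equation*}
 E(x,d_2) \leq c_1^2 \cdot 4\,E(z,2d_2) \leq 4c_1^2\cdot 2^{Q-2}\,E(z,d_2),
\end{equation*}
where the second inequality again uses \eqref{eq.doublingE}. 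Combining the two cases produces the required inequality with $\kappa := 2^{2(Q-2)}\cdot 4c_1^2$.

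\textbf{Step 3 (conclusion).} Returning through Step 1, we obtain
\begin{equation*}
 \gamma(x,y) \leq c_2\,E(x,d) \leq c_2\,\kappa\bigl(E(x,d_1)+E(z,d_2)\bigr) \leq c_2^2\,\kappa\bigl(\gamma(x,z)+\gamma(z,y)\bigr),
\end{equation*}
so the pseudo-triangle inequality holds with $\mathbf{c} := c_2^2\,\kappa$. By Remark \ref{rem.assumptionGdistance}, this yields (G) with $\theta := 1/(2\mathbf{c}) \in (0,1)$.

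The only genuinely non-routine step is the change-of-center estimate in Case 2, where one must go from $E(x,d_2)$ to $E(z,d_2)$; all the technical ingredients for this are however already packaged in Remark \ref{rem.propertiesELambda}(c), itself a consequence of the global NSW-type bounds in Theorem \ref{main.ThmNSW}. The remaining inputs (triangle inequality for $d_\XX$, monotonicity and doubling of $E(x,\cdot)$) are immediate.
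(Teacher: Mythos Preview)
Your proof is correct and follows essentially the same route as the paper: reduce (G) via Remark~\ref{rem.assumptionGdistance} to a pseudo-triangle inequality for $(x,y)\mapsto E(x,d_\XX(x,y))$, split into two cases according to which leg $d_1,d_2$ is larger, and handle the change-of-center case via the inclusion estimate of Remark~\ref{rem.propertiesELambda}(c) combined with \eqref{eq.doublingE}. The only cosmetic difference is that the paper uses the inclusion $B_\XX(x,2d_2)\subseteq B_\XX(z,3d_2)$ directly, whereas you first apply doubling and then use $B_\XX(x,d_2)\subseteq B_\XX(z,2d_2)$, yielding slightly different constants.
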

	\begin{proof}
	 According to Remark \ref{rem.assumptionGdistance}, $\LL$ fulfill
	 assumption (G) \emph{if and only if} the reciprocal 
	 function $\gamma(x,y) = 1/\Gamma(x;y)$
	 (with the convention $\gamma(x,x) = 0$) satisfies a pseudo-triangle inequality;
	 on the other hand, by Theorem \ref{thm.globalSanchez}, we have
	 $$\frac{1}{c_1}E(x,d_\mathcal{X}(x,y)) \leq \gamma(x,y) \leq c_2\,E(x,d_\mathcal{X}(x,y))
	 \quad \text{for every $x,y\in\RN$}.$$
	 Thus, to prove the proposition, it suffices 
	 to show that there exists $\mathbf{c} > 1$ such that,
	 for every $x,y,z\in\RN$, the following inequality holds true:
	 \begin{equation} \label{eq.toprovepseudoE}
	  E(x,d_\mathcal{X}(x,y)) \leq \mathbf{c}\big(E(x,d_\mathcal{X}(x,z))
	 + E(z,d_\mathcal{X}(z,y))\big).
	 \end{equation}
	 First of all we observe that, 
	 since $d_\mathcal{X}$ satisfies a genuine triangle inequality, for every
	 $x,y,z\in\RN$ we have
	 (see Remark \ref{rem.propertiesELambda}-(a))
	 $$E(x,d_\mathcal{X}(x,y)) \leq E\big(x, d_\mathcal{X}(x,z) + 
	 d_\mathcal{X}(z,y)\big) =: \big(\bigstar\big)$$
	 as a consequence, 
	 if $d_\mathcal{X}(z,y)\leq d_\mathcal{X}(x,z)$, we obtain
	 (see also Remark \ref{rem.propertiesELambda}-(b))
	 \begin{align*}
	  \big(\bigstar\big) &\,\,\leq\,\,E(x,2d_\mathcal{X}(x,z))
	  \stackrel{\eqref{eq.doublingE}}{\leq} 2^{Q-2}\,E(x,d_\mathcal{X}(x,z)) \\[0.2cm]
	  & \,\,\leq\,\,2^{Q-2}\,\big(E(x,d_\mathcal{X}(x,z))
	 + E(z,d_\mathcal{X}(z,y))\big).
	 \end{align*}
	 If, instead, $d_\mathcal{X}(z,y) > d_\mathcal{X}(x,z)$, from the obvious
	 fact that 
		  $B_\mathcal{X}(x,2d_\mathcal{X}(z,y))$ is 
		  included in $B_\mathcal{X}(z,3d_\mathcal{X}(z,y))$
		  we get (see also Remark 
	 \ref{rem.propertiesELambda}-(c))
	 \begin{align*}
	  \big(\bigstar\big) & \,\,\,\,\leq \,\,\,\,
	   E(x,2d_\mathcal{X}(z,y)) 
	  \stackrel{\eqref{eq.ELambdainclusion}}{\leq} 
	  c_1^2\,\bigg(\frac{3}{2}\bigg)^2\,E(z,3\,d_\mathcal{X}(z,y)) \\[0.2cm]
	  & \stackrel{\eqref{eq.doublingE}}{\leq} 
	  c_1^2\,\bigg(\frac{3}{2}\bigg)^2\,3^{Q-2}\,E(z,d_\mathcal{X}(z,y)) \\[0.2cm]
	  & \,\,\,\,\leq\,\,\,\, 
	  \bigg(\frac{3^{Q}c_1^2}{4}\bigg)\,\big(	E(x,d_\mathcal{X}(x,z))
	 + E(z,d_\mathcal{X}(z,y))\big).
	 \end{align*}
	Setting $\mathbf{c} := \max\{2^{Q-2}, 3^{Q}\,c_1^2/4\}$, 
	we obtain the desired \eqref{eq.toprovepseudoE}.
	\end{proof}
	\paragraph{Assumption (L).} In this paragraph we prove that $\LL$
	satisfies the Liouville-type theorem in assumption (L): a bounded
	$\LL$-harmonic function on $\RN$ is constant.
	\begin{proposition} \label{prop.LiouvHom}
	 Let $u\in\mathcal{L}(\RN)$ be a $\LL$-harmonic function
	 on $\RN$. If $u$ is boun\-ded (above or below),
	 then $u$ is constant throughout $\RN$.
	\end{proposition}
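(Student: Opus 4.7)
The plan is to reduce the statement to the case of a non-negative $\LL$-harmonic function whose infimum over $\RN$ is zero, and then exploit a \emph{scale-invariant Harnack inequality} obtained from the homogeneity of $\LL$ with respect to $\{\dela\}_{\lambda>0}$. First, observing that $\LL u = 0$ if and only if $\LL(-u) = 0$, we may assume $u$ is bounded below; setting $v := u - \inf_{\RN}u$ we obtain a non-negative $\LL$-harmonic function on $\RN$ with $\inf_{\RN}v = 0$. It suffices to prove $v \equiv 0$.

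The crucial step is the following uniform Harnack estimate: there exists a constant $C \geq 1$, independent of $\lambda > 0$, such that
\begin{equation*}
 \sup_{B_\XX(0,\lambda/2)} v \,\leq\, C\,\inf_{B_\XX(0,\lambda/2)} v \qquad \text{for every } \lambda > 0.
\end{equation*}
To prove this, I would invoke the strong Harnack inequality for $\LL$ from \cite{BBB} on the fixed ball $B_\XX(0,1)$: applied to any non-negative $\LL$-harmonic function $w$ on $B_\XX(0,1)$, it yields $\sup_{B_\XX(0,1/2)} w \leq C \inf_{B_\XX(0,1/2)} w$ with a constant $C$ depending only on the fixed geometry. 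Then, for any $\lambda > 0$, the function $w_\lambda := v \circ \dela$ is defined and non-negative on $B_\XX(0,1)$ (using $\dela^{-1}(B_\XX(0,\lambda)) = B_\XX(0,1)$ from \eqref{eq.dxballshomogeneous}) and, since $X_1,\ldots,X_m$ are $\dela$-homogeneous of degree $1$, one has $\LL(v \circ \dela) = \lambda^2 (\LL v)\circ\dela = 0$, so $w_\lambda$ is $\LL$-harmonic on $B_\XX(0,1)$. Applying the local Harnack estimate to $w_\lambda$ and translating back through $\dela(B_\XX(0,1/2)) = B_\XX(0,\lambda/2)$ gives the displayed scale-invariant inequality with the \emph{same} constant $C$.

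Finally, letting $\lambda \to \infty$, the right-hand side tends to $C \cdot \inf_{\RN} v = 0$ (since $d_\XX$-balls invade $\RN$), while the left-hand side tends to $\sup_{\RN} v$; hence $v \equiv 0$, which proves that $u \equiv \inf_{\RN} u$ is constant. The symmetric case ($u$ bounded above) is recovered by applying what has just been proved to $-u$.

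The main potential obstacle is a careful citation/application of the strong Harnack inequality from \cite{BBB} on the specific ball $B_\XX(0,1)$ (one needs Harnack on the Carnot--Carath\'eodory ball with the open set slightly larger than the compact set on which one compares sup and inf); once this local statement is in hand, the scaling argument via $\dela$ is routine thanks to the degree-$2$ homogeneity of $\LL$, and the limit $\lambda \to \infty$ is immediate.
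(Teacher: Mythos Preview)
Your argument is correct and proceeds along a genuinely different route from the paper's proof. The paper establishes the Liouville property by \emph{lifting}: invoking \cite[Theorem~3.2]{BiagiBonfLast}, it embeds $\RN$ into a higher-dimensional homogeneous Carnot group $\G=(\R^H,*,d_\lambda)$ so that the sub-Laplacian $\Delta_\G$ projects onto $\LL$; then $v:=u\circ\pi$ is bounded and $\Delta_\G$-harmonic on $\R^H$, and the classical Liouville theorem for sub-Laplacians on Carnot groups (e.g.\ \cite[Theorem~5.8.2]{BLUlibro}) forces $v$, hence $u$, to be constant.

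Your approach instead stays on $\RN$ and exploits directly the $\dela$-homogeneity of $\LL$: the local Harnack inequality from \cite{BBB} on the fixed pair $\overline{B_\XX(0,1/2)}\subset B_\XX(0,1)$ is pulled back via the dilations to obtain a scale-invariant Harnack inequality with a uniform constant, and then letting the radius go to infinity yields the result. This is more elementary in that it avoids the lifting machinery and the Carnot-group Liouville theorem, relying only on tools already used elsewhere in the paper (the Harnack inequality and the homogeneity relation \eqref{eq.dxballshomogeneous}). The paper's route, on the other hand, makes the dependence on the stratified-group theory explicit and would extend more readily to statements where a direct scaling argument is less transparent. Both proofs are short; yours is arguably closer in spirit to the classical Harnack-based proof of Liouville for $\Delta$.
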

	{One demonstration of Proposition \ref{prop.LiouvHom} can be found
	in \cite{LancoK}; however, we present below another prove
	of this result, which is almost self-contained.}
	\begin{proof}
	 By \cite[Theorem 3.2]{BiagiBonfLast}, it is possible to find a homogeneous Carnot
	 group $\G = (\R^H,*,d_\lambda)$ on $\R^H$ (for a suitable $H > N$) and a system
	 $\mathcal{Z} = \{Z_1,\ldots,Z_m\}$ of Lie-generator for $\LieG$ such that,
	 setting $\Delta_\G = \sum_{j = 1}^mZ_j^2$, one has
	 $$\Delta_\G(f \circ \pi) = (\LL f)\circ\pi \qquad 
	 \text{for every $f\in C^\infty(\RN,\R)$}$$
	 (here, $\pi:\R^H\to\RN$ is the canonical projection of $\R^H$ onto the first
	 $N$ variables).
	Thus, since $u\in\mathcal{H}_\LL(\RN)$, the function
	$v := u\circ\pi$ is $\Delta_\G$-harmonic on $\G\equiv\R^H$.
	
	On the other hand, since (by assumption) 
	$u$ is bounded (from above or from below), then the same is true of $v$; as a consequence,
	by the classical Liouville Theorem on Carnot groups 
	(see, e.g., \cite[Theorem 5.8.2]{BLUlibro}), we conclude
	that $v$ is constant throughout $\R^H$, whence $u$ is constant on $\RN$.
	This ends the proof.
	\end{proof}
	\paragraph{Assumption (D).} In this last paragraph of the section we prove
	that $\LL$ fulfills assumption (D). Actually, according to Remark
	\ref{rem.extendeddoubling}, we directly show that
	the super-level sets of $\Gamma$ satisfy the doubling/reverse doubling conditions in
	 \eqref{eq.doublingandreverseBIS}.
	
	To this end we first observe that, since the function $E(x,\cdot)$ is strictly increasing
	on $(0,\infty)$ for every fixed $x\in\RN$ (see Remark \ref{rem.propertiesELambda}),
	we can define
	$$H(x,\cdot) := \big(E(x,\cdot)\big)^{-1} \quad \text{on $(0,\infty)$}.$$
	Obviously, $H$ is strictly increasing on $(0,\infty)$;
	moreover, it satisfies the ``dual'' property
	of \eqref{eq.doublingE}, that is, for every $x\in\RN$ and every $0 < r < R$ we have
	\begin{equation} \label{eq.dualdoublingH}
	 \bigg(\frac{R}{r}\bigg)^{\frac{1}{Q-2}}H(x,r)
	 \leq H(x,R) \leq \bigg(\frac{R}{r}\bigg)^{\frac{1}{N-2}}H(x,r).
	\end{equation}
	By means of such a function (and of Theorem \ref{thm.globalSanchez}),
	we can write a precise relation between $\Gamma$-balls and $d_\mathcal{X}$-balls: in fact,
	since $\gamma(x,y) = 1/\Gamma(x,y)$ 
	can be estimated (from above and from below) by $E(x,d_\mathcal{X}(x,y))$,
	we have (see Remark \ref{rem.GGammaquasid})
	\begin{equation} \label{eq.relationGammaballdball}
	B_{\mathcal{X}}\big(x,H(x,r/c_2)\big) 
	\subseteq \Omega(x,r) \subseteq B_{\mathcal{X}}\big(x, H(x,c_2r)\big)
	\end{equation}		
	for every $x\in\RN$ and every $r > 0$ (here, $c_2$ is the constant
	in Theorem \ref{thm.globalSanchez}). As a consequence of this identity, we easily
	obtain the following lemma.
	\begin{lemma} \label{lem.measureGammaball}
	 There exists an absolute constant $c_3 \geq 1$ such that
	 \begin{equation} \label{eq.measureGammaball}
	  \frac{1}{c_3}\,\big(r\,H^2(x,r)\big) \leq
	  \big|\Omega(x,r)\big| \leq c_3\,\big(r\,H^2(x,r)\big)
	 \end{equation}
	 for every $x\in\RN$ and every $r > 0$.
	\end{lemma}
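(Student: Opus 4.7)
The plan is to combine the two-sided inclusion \eqref{eq.relationGammaballdball} with the global Nagel--Stein--Wainger estimate \eqref{eq.estimlambdaBx} and the elementary identity that expresses $|B_{\mathcal{X}}(x,H(x,r))|$ directly in terms of $r\,H^{2}(x,r)$. The key observation is that, since $H(x,\cdot)$ is the inverse of $E(x,\cdot)$ and $E(x,\rho)=\Lambda(x,\rho)/\rho^{2}$, setting $\rho = H(x,s)$ yields $E(x,\rho)=s$, hence
\begin{equation*}
\Lambda(x,H(x,s)) \;=\; H^{2}(x,s)\cdot E(x,H(x,s)) \;=\; s\,H^{2}(x,s).
\end{equation*}
Combined with \eqref{eq.estimlambdaBx}, this gives the fundamental identity
\begin{equation*}
\frac{1}{c_{1}}\,s\,H^{2}(x,s) \;\leq\; \bigl|B_{\mathcal{X}}(x,H(x,s))\bigr| \;\leq\; c_{1}\,s\,H^{2}(x,s),
\qquad \forall\,x\in\RN,\ s>0.
\end{equation*}

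Next, I would plug this into \eqref{eq.relationGammaballdball} with $s=c_2 r$ (for the upper bound) and $s=r/c_2$ (for the lower bound). This produces
\begin{equation*}
\tfrac{1}{c_{1}c_{2}}\,r\,H^{2}(x,r/c_{2})
\;\leq\; |\Omega(x,r)| \;\leq\; c_{1}c_{2}\,r\,H^{2}(x,c_{2}r).
\end{equation*}
The final step is to absorb the arguments $r/c_{2}$ and $c_{2}r$ back to $r$ using the doubling/reverse doubling bounds \eqref{eq.dualdoublingH} for $H(x,\cdot)$, which applied to the pairs $(r/c_{2},r)$ and $(r,c_{2}r)$ yield
\begin{equation*}
c_{2}^{-1/(N-2)}\,H(x,r) \;\leq\; H(x,r/c_{2}),\qquad
H(x,c_{2}r)\;\leq\;c_{2}^{1/(N-2)}\,H(x,r)
\end{equation*}
(recall that $c_2\geq 1$, so these are legitimate applications). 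Squaring and inserting in the previous display gives \eqref{eq.measureGammaball} with $c_{3}=c_{1}\,c_{2}^{N/(N-2)}$.

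There is essentially no genuine obstacle: the argument is a bookkeeping exercise combining the three ingredients (the sandwich between $\Gamma$-balls and $d_{\mathcal{X}}$-balls, the NSW volume estimate, and the doubling properties of $H$). The only point where one must be slightly careful is to check that all constants involved, $c_{1}$ and $c_{2}$, are $\geq 1$ so that the doubling-style inequalities \eqref{eq.dualdoublingH} are applied in the correct direction; this is immediate from their definitions in Theorems \ref{main.ThmNSW} and \ref{thm.globalSanchez}.
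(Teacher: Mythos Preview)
Your proposal is correct and follows essentially the same approach as the paper: both combine the inclusion \eqref{eq.relationGammaballdball} with the NSW estimate \eqref{eq.estimlambdaBx}, the identity $\Lambda(x,H(x,s))=s\,H^{2}(x,s)$, and then absorb the factor $c_{2}$ via the doubling bound \eqref{eq.dualdoublingH} for $H$. The paper carries out only the lower bound explicitly and states the upper bound is analogous, whereas you do both sides and track the constant $c_{3}=c_{1}c_{2}^{N/(N-2)}$ explicitly.
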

	\begin{proof}
	 Let $x\in\RN$ be fixed and let $r > 0$. 
	 By the above
	\eqref{eq.relationGammaballdball} and Theorem \ref{main.ThmNSW},
	we have (see also \eqref{eq.defLambdaE} and remind that
	$H(x,\cdot)$ is the inverse of $E(x,\cdot)$)
	\begin{align*}
	 \big|\Omega(x,r)\big| & \,\,\,\geq\,\,\, \big|
	 B_{\mathcal{X}}\big(x,H(x,r/c_2)\big)\big|
	 \stackrel{\eqref{main.ThmNSW}}{\geq} \frac{1}{c_1}\,\Lambda\big(x,H(x,r/c_2)\big) \\[0.2cm]
	 & \stackrel{\eqref{eq.defLambdaE}}{=} \frac{1}{c_1}\,E\big(x, H(x, r/c_2)\big)\cdot H^2(x,r/c_2) \\[0.2cm]
	 & \,\,\,=\,\,\,\frac{1}{c_1\,c_2}\,\big(r\,H^2(x,r/c_2)\big) = \big(\bigstar\big)
	\end{align*}
	From this, by the second inequality in
	 \eqref{eq.dualdoublingH}, we obtain (remind that $c_2\geq 1$)
	\begin{align*}
	 \big(\bigstar\big) \geq \frac{1}{c_1\,c_2}\,c_2^{2/(N-2)}\,\big(r\,H^2(x,r)\big)
	 = \frac{1}{c_3}\,\big(r\,H^2(x,r)\big).
	\end{align*}
	The second inequality in \eqref{eq.measureGammaball}
	 can be demonstrated
	 analogously.
	\end{proof}
	We can now prove that $\LL$ satisfies \eqref{eq.doublingandreverseBIS}.
	\begin{proposition} \label{prop.LLhomsatisfiesD}
	  There exists an absolute constant $c_4 \geq 1$ such that
	  \begin{equation} \label{eq.LLhomsatisfiesD}
	   \frac{1}{c_4}\,
		\bigg(\frac{R}{r}\bigg)^{\frac{Q}{Q-2}}\,\big|\Omega(x,r)\big|
	   \leq \big|\Omega(x,R)\big| 
	   \leq c_4\,\bigg(\frac{R}{r}\bigg)^{\frac{N}{N-2}}\,\big|\Omega(x,r)\big|
	  \end{equation}
	  for every $x\in\RN$ and every $R,r\in (0,\infty)$ with $r < R$.
	\end{proposition}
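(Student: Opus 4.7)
The proof is quite direct once one combines the two preceding results, so I sketch the plan.

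The plan is to reduce everything to Lemma \ref{lem.measureGammaball} and the ``dual'' doubling inequality \eqref{eq.dualdoublingH} for the function $H(x,\cdot) = (E(x,\cdot))^{-1}$. Writing the ratio $|\Omega(x,R)|/|\Omega(x,r)|$, Lemma \ref{lem.measureGammaball} yields, for every $x\in\RN$ and every $0<r<R$,
$$
\frac{1}{c_3^2}\,\frac{R}{r}\,\bigg(\frac{H(x,R)}{H(x,r)}\bigg)^{2}
\;\leq\; \frac{|\Omega(x,R)|}{|\Omega(x,r)|}
\;\leq\; c_3^2\,\frac{R}{r}\,\bigg(\frac{H(x,R)}{H(x,r)}\bigg)^{2}.
$$
So the entire task reduces to controlling the ratio $H(x,R)/H(x,r)$ from above and from below by powers of $R/r$; this is exactly the content of \eqref{eq.dualdoublingH}.

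For the upper bound in \eqref{eq.LLhomsatisfiesD} I would use the second inequality in \eqref{eq.dualdoublingH}, giving $(H(x,R)/H(x,r))^{2}\leq (R/r)^{2/(N-2)}$; then
$$
\frac{|\Omega(x,R)|}{|\Omega(x,r)|}\leq c_3^2\,\frac{R}{r}\cdot\bigg(\frac{R}{r}\bigg)^{\!\frac{2}{N-2}}
= c_3^{2}\,\bigg(\frac{R}{r}\bigg)^{\!1+\frac{2}{N-2}}
= c_3^{2}\,\bigg(\frac{R}{r}\bigg)^{\!\frac{N}{N-2}},
$$
since $1+\tfrac{2}{N-2}=\tfrac{N}{N-2}$. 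Symmetrically, for the lower bound I would use the first inequality in \eqref{eq.dualdoublingH}, namely $(H(x,R)/H(x,r))^2\geq (R/r)^{2/(Q-2)}$, and obtain
$$
\frac{|\Omega(x,R)|}{|\Omega(x,r)|}\geq \frac{1}{c_3^{2}}\,\bigg(\frac{R}{r}\bigg)^{\!1+\frac{2}{Q-2}}
= \frac{1}{c_3^{2}}\,\bigg(\frac{R}{r}\bigg)^{\!\frac{Q}{Q-2}}.
$$
Setting $c_4:=c_3^{2}$ yields \eqref{eq.LLhomsatisfiesD}.

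There is essentially no obstacle here: once Lemma \ref{lem.measureGammaball} is available, the estimate is a one-line algebraic manipulation combined with \eqref{eq.dualdoublingH}. The only small point worth flagging is that the two exponents $Q/(Q-2)$ and $N/(N-2)$ are both strictly larger than $1$ (since $N\geq 3$ and $Q\geq N$), which matches the requirement $p,q>1$ in Remark \ref{rem.extendeddoubling} and hence confirms that $\LL$ satisfies \eqref{eq.doublingandreverseBIS} with the desired strict-inequality behaviour, so that assumption (D) is fulfilled via the equivalent formulation observed in that remark.
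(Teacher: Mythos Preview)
Your proof is correct and follows essentially the same approach as the paper: both combine Lemma \ref{lem.measureGammaball} with the dual doubling inequality \eqref{eq.dualdoublingH} for $H$, arriving at the same constant $c_4=c_3^2$. The only cosmetic difference is that the paper estimates $|\Omega(x,R)|$ directly rather than first forming the ratio $|\Omega(x,R)|/|\Omega(x,r)|$, but the ingredients and the computation are identical.
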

	\begin{proof}
	 Let $x\in\RN$ be fixed and let $r,R\in(0,\infty)$ be such that
	 $r < R$. By com\-bi\-ning Lemma \ref{lem.measureGammaball}
	 with the first inequality in \eqref{eq.dualdoublingH}, we obtain
	 \begin{align*}
	  \big|\Omega(x,R)\big| & \,\,\,\,\geq\,\,\, \frac{1}{c_3}\,\big(R\,H^2(x,R)\big)
	  \stackrel{\eqref{eq.dualdoublingH}}{\geq} 
	  \frac{1}{c_3}\,\bigg(\frac{R}{r}\bigg)^{2/(Q-2)}\,\big(R\,H^2(x,r)\big) \\[0.2cm]
	  & \,\,\,\,=\,\,\, \frac{1}{c_3}\,\bigg(\frac{R}{r}\bigg)^{Q/(Q-2)}\,\big(r\,H^2(x,r)\big) \\[0.2cm]
	  & \stackrel{\eqref{eq.measureGammaball}}{\geq} 
	  \frac{1}{c_3^2}\,\bigg(\frac{R}{r}\bigg)^{Q/(Q-2)}\,\big|\Omega(x,r)\big|.
	 \end{align*}
	 The second inequality in \eqref{eq.LLhomsatisfiesD} can be proved
	 analogously.
	\end{proof}
	Gathering together all the facts proved in these paragraphs
	we obtain the following result, which is a restatement
	of Theorem \ref{thm.mainSummarizeIntro} in the present setting.
	\begin{theorem} \label{thm.resume}
	 Let $X_1,\ldots,X_m$ be linearly independent smooth vector fields on $\RN$
	 \emph{(}with $N\geq 3$\emph{)}
	 satisfying the assumptions \emph{(I)-(II)} introduced in the Introduction. Moreover,
	 let $\LL = \sum_{j = 1}^mX_j^2$. Then, the following facts hold true:
	 \begin{itemize}
	  \item[\emph{(1)}] An open set $\Omega\subseteq\RN$ is a maximum principle set
	 for $\LL$ \emph{if and only if} its complement 
	 $\RN\setminus\Omega$ is $\LL$-large at infinity.
	 \item[\emph{(2)}] If $\Omega\subseteq\RN$
	 is an open set such that its complement $\RN\setminus\Omega$ is $p_\LL$-unbounded
	 (for some $p > 1$), then $\Omega$
	 is a maximum principle set for $\LL$.
	 \item[\emph{(3)}] If
	 $\Omega\subseteq\RN$ is an open set such that
	 its complement 
	 $\RN\setminus\Omega$ contains a $\Gamma$-cone,
	 then $\Omega$ is a maximum principle set for $\LL$.
	 \end{itemize}
	\end{theorem}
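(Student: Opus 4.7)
The plan is to observe that Theorem \ref{thm.resume} is an immediate restatement of the abstract Theorem \ref{thm.mainSummarizeIntro} in the present setting, so the proof reduces to verifying that every homogeneous H\"ormander operator $\LL = \sum_{j=1}^m X_j^2$ fulfills all six structural hypotheses (H1)--(H3), (FS), (G), (L), (D). The bulk of Section \ref{sec.homoperator} is devoted to these verifications; the proof itself is then a one-line invocation.

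More specifically, I would proceed in the order laid out in the preceding paragraphs. First, rewrite $\LL$ in divergence form with $V\equiv 1$ and $A = SS^\top$, $S = (X_1,\ldots,X_m)$, so that (H1)--(H2) are immediate; (H3) then follows from the H\"ormander rank condition (II) combined with H\"ormander's hypoellipticity theorem applied to both $\LL$ and $\LL-\e$. Assumption (FS) comes from \cite{BiagiBonfLast} (existence, smoothness and the homogeneity \eqref{eq.crucialGammahom} of $\Gamma$) together with \cite{BiBoBra} (pole on the diagonal). Assumption (G) is Proposition \ref{prop.assumptionGhom}: via Remark \ref{rem.assumptionGdistance} it suffices to prove a pseudo-triangle inequality for $\gamma = 1/\Gamma$, and the two-sided Sanchez--Calle-type bound \eqref{eq.globalSanchez} reduces this to the analogous property for $E(x,d_\mathcal{X}(x,\cdot))$, which follows from the doubling \eqref{eq.doublingE} and the genuine triangle inequality for $d_\mathcal{X}$. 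Assumption (L) is Proposition \ref{prop.LiouvHom}, obtained by lifting $u$ to a homogeneous Carnot group via \cite{BiagiBonfLast} and invoking the classical Liouville theorem on Carnot groups. Finally, (D), in the weaker form \eqref{eq.doublingandreverseBIS} allowed by Remark \ref{rem.extendeddoubling}, is Proposition \ref{prop.LLhomsatisfiesD}, resting on the volume estimate \eqref{eq.measureGammaball} of Lemma \ref{lem.measureGammaball} and the dual doubling \eqref{eq.dualdoublingH} for the inverse function $H(x,\cdot) = E(x,\cdot)^{-1}$.

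Once all six hypotheses are established, statement (1) of the theorem is precisely Theorem \ref{thm.LLthinMPset}, statement (2) is obtained by combining Theorem \ref{thm.mainpsetthin} with (1), and statement (3) by combining Theorem \ref{thm.GammaconeIntro} with (2); all three are subsumed in Theorem \ref{thm.mainSummarizeIntro}. The main technical obstacle in this chain of verifications is the global two-sided estimate \eqref{eq.globalSanchez} of Theorem \ref{thm.globalSanchez}: it relies on the deep global Nagel--Stein--Wainger volume bound of Theorem \ref{main.ThmNSW} combined with the equally delicate global Sanchez--Calle-type bounds from \cite{BiBoBra}, whose validity in the purely homogeneous (non-Carnot) setting is what genuinely drives the theory. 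Once \eqref{eq.globalSanchez} is in hand, both (G) and (D) follow by soft arguments, and the remaining hypotheses are essentially structural.
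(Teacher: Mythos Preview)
Your proposal is correct and follows exactly the paper's own approach: the paper states Theorem \ref{thm.resume} simply as ``gathering together all the facts proved in these paragraphs'' and calls it a restatement of Theorem \ref{thm.mainSummarizeIntro} in the homogeneous setting, with no separate proof environment. Your itemized verification of (H1)--(H3), (FS), (G), (L) and (the weaker form of) (D), followed by the one-line invocation of Theorem \ref{thm.mainSummarizeIntro}, matches the paper's structure precisely, including the observation that only \eqref{eq.doublingandreverseBIS} (not the full \eqref{eq.doublingreverseweak}) is established via Proposition \ref{prop.LLhomsatisfiesD}.
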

	We now proceed in this section by proving
	Proposition \ref{prop.delaconeIntro}
	stated in the In\-tro\-duc\-tion. To this end, we first establish the following
	result.
	\begin{proposition} \label{prop.deltaconeisGammacone}
	Let $F\subseteq\RN$ be a \emph{non-degenerate $\dela$-cone}, according
	to De\-fi\-ni\-tion \ref{defi.delacone}.
	Then $F$ is a $\Gamma$-cone.
	\end{proposition}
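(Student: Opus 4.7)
The plan is to exploit the homogeneity of $\Gamma$ under the dilations $\{\dela\}_{\lambda > 0}$ to push a single $\Gamma$-ball contained in $F$ to infinity by applying dilations of larger and larger parameter. The key identity, coming from $\Gamma(\dela(x),\dela(y)) = \lambda^{2-Q}\Gamma(x,y)$, is that $\gamma(\dela(x),\dela(y)) = \lambda^{Q-2}\gamma(x,y)$ and hence
\[
 \dela\bigl(\Omega(x,r)\bigr) = \Omega\bigl(\dela(x),\lambda^{Q-2}\,r\bigr)
 \qquad\text{for every $x\in\RN$ and every $r>0$.}
\]
This is the only structural ingredient we need.

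Concretely, I would first use $\mathrm{int}(F)\neq\varnothing$ together with the fact that the $\Gamma$-balls $\{\Omega(x,r)\}_{r>0}$ form a basis of bounded open neighbourhoods of $x$ (a consequence of assumption (FS)) to select a point $z_0\in\mathrm{int}(F)$ \emph{different from the origin} and a radius $R_0>0$ such that $\Omega(z_0,R_0)\subseteq F$. The only subtle point here is the choice $z_0\neq 0$, which is possible because $\mathrm{int}(F)$ is a non-empty Euclidean open set and therefore uncountable. Next I pick any strictly increasing sequence $\lambda_j\to\infty$ with $\lambda_j\geq\lambda_0$ (where $\lambda_0$ is as in Definition \ref{defi.delacone}), and I set
\[
 z_j := \delta_{\lambda_j}(z_0), \qquad R_j := \lambda_j^{Q-2}\,R_0.
\]

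With this construction, the displayed identity above yields $\Omega(z_j,R_j) = \delta_{\lambda_j}\bigl(\Omega(z_0,R_0)\bigr)$; since $\Omega(z_0,R_0)\subseteq F$ and $\delta_{\lambda_j}(F)\subseteq F$ by assumption, we obtain $\Omega(z_j,R_j)\subseteq F$ for every $j$. The two conditions in Definition \ref{defi.Gammacone} are then immediate to check: condition (i) follows from the fact that $z_0\neq 0$ has some non-zero coordinate $z_0^{(i)}$, and therefore the $i$-th coordinate of $z_j$ equals $\lambda_j^{\sigma_i}\,z_0^{(i)}\to\infty$; condition (ii) follows from the homogeneity of $\gamma$, because
\[
 \frac{R_j}{\gamma(0,z_j)} = \frac{\lambda_j^{Q-2}\,R_0}{\gamma(\delta_{\lambda_j}(0),\delta_{\lambda_j}(z_0))} = \frac{\lambda_j^{Q-2}\,R_0}{\lambda_j^{Q-2}\,\gamma(0,z_0)} = \frac{R_0}{\gamma(0,z_0)} > 0
\]
is in fact constant in $j$, so its $\liminf$ is strictly positive.

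I do not foresee any serious obstacle: once the homogeneity of $\Gamma$-balls under $\dela$ is noted, the proposition reduces to a one-line construction. The only minor care needed is the choice $z_0\neq 0$ (to ensure $\|z_j\|\to\infty$), which is trivially available from $\mathrm{int}(F)\neq\varnothing$. Thus the family $\mathcal{F}=\{\Omega(z_j,R_j)\}_{j\in\N}$ witnesses that $F$ is a $\Gamma$-cone.
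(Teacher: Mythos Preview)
Your proposal is correct and follows essentially the same approach as the paper's proof: both fix a point $z_0\in\mathrm{int}(F)\setminus\{0\}$ with $\Omega(z_0,R_0)\subseteq F$, push this $\Gamma$-ball to infinity via a sequence $\lambda_j\to\infty$ with $\lambda_j\geq\lambda_0$, and use the joint homogeneity $\Gamma(\dela(x),\dela(y))=\lambda^{2-Q}\Gamma(x,y)$ to obtain $\Omega(z_j,R_j)=\delta_{\lambda_j}(\Omega(z_0,R_0))\subseteq F$ and the constancy of $R_j/\gamma_0(z_j)$. Your justification of $\|z_j\|\to\infty$ via a non-zero coordinate of $z_0$ is slightly more explicit than the paper's, but the argument is the same.
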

	\begin{proof}
	 According to Definition \ref{defi.Gammacone}, 
	 we have to prove the existence of a countable family
	 $\mathcal{F} = \{\Omega(z_n,R_n)\}_n$ 
	 such that $\Omega(z_n,R_n)\subseteq F$ for any $n\in\N$ and \medskip
	 
	 (a)\,\,$\|z_n\|\to\infty$ as $n\to\infty$; \medskip
	 
	 (b)\,\,$\liminf_{n\to\infty}R_n/\gamma_0(z_n) > 0$. \medskip
	 
	 \noindent To this end, we fix $z_0\in\mathrm{int}(F)\setminus\{0\}$ and we let
	 $R_0 > 0$ be such that $\Omega(z_0,R_0)\subseteq F$. 
	 Chosen a sequence $\{\lambda_n\}_n\subseteq(\lambda_0,\infty)$ diverging to
	 $\infty$ as $n\to\infty$, we define
	 $$\Omega_n = \Omega(z_n,R_n) := 
	 \Omega\big(\delta_{\lambda_n}(z_0), \lambda_n^{Q-2}\,R_0\big) \quad
	 \text{for every $n\in\N$}.$$
	 Since the fundamental solution $\Gamma$ of $\LL$
	 is jointly homogeneous of degree $2-Q$, 
	 it is straightforward to recognize that, for every $n\in\N$,
	 $$\Omega_n = \delta_{\lambda_n}\big(\Omega(z_0,R_0)\big);$$
	 hence, by property (ii) of $F$ we have
	 $\Omega_n\subseteq F$ for any $n\in\N$. Furthermore, we have
	 $\|z_n\| = \|\delta_{\lambda_n}(z_0)\|\to\infty$ as $n\to\infty$ and,
	 again by jointly homogeneity of $\Gamma$,
	 $$R_n/\gamma_0(z_n) = R_0/\gamma_0(z_0) > 0, \quad
	 \text{for every $n\in\N$}.$$ 
	 \noindent This shows that $\mathcal{F} := 
	 \{\Omega_n\}_{n}$ is a countable family
	 of $\Gamma$-balls (contained in $F$) satisfying (a) and (b), whence
	 $F$ is a $\Gamma$-cone. 
	\end{proof}
	\begin{remark} \label{rem.halfspaceCone}
	 Let $v\in\RN\setminus\{0\}$ be fixed
	 and let $h\in\R$. Then
	 the half-space
	 $$\Pi := \{x\in\RN: \langle x, v\rangle \geq h\}$$
	 contains a $\dela$-cone. Indeed, if we consider the subset of $\Pi$ defined by
	 $$\text{$C := \{x\in\Pi : x_iv_i\geq 0$ for any $i = 1,\ldots,N\}$},$$ 
	 it is very easy
	 to recognize that $\mathrm{int}(C) \neq \varnothing$; moreover,
	 $\dela(x)\in C$ {for every $x\in C$ and every
	 $\lambda > 1$}.
	 Hence, $C$ is a (non-degenerate) $\dela$-cone
	 contained in $\Pi$.
	\end{remark}
	By combining the above Proposition \ref{prop.deltaconeisGammacone} 
	with Theorem
   \ref{thm.GammaconeIntro}, we are able to provide a
	very simple proof of 
	Proposition \ref{prop.delaconeIntro}.
	\begin{proof} [Proof (of Proposition \ref{prop.delaconeIntro})]
	Let $F\subseteq\RN$ be as in the statement of the pro\-po\-si\-tion. 
	By assumption, there exists a non-degenerate
	$\dela$-cone $C\subseteq F$; on the other hand, by Proposition
	\ref{prop.deltaconeisGammacone}, $C$ is
	a $\Gamma$-cone (in the sense of Definition \ref{defi.Gammacone});
	as a consequence, from Theorem \ref{thm.GammaconeIntro} we infer
	the existence of a suitable $p > 1$ such that
	$F$ is $p_\LL$-unbounded. This ends the proof.
	\end{proof}
	The next Proposition \ref{prop.pboundqbounddx}, 
	which is the last result of the section,
	contains a useful characterization of the notion of
	$p_\LL$-boundedness in terms
	of the control distance $d_\mathrm{X}$ 
	(associated with the vector fields $X_1,\ldots,X_m$).
	\begin{proposition} \label{prop.pboundqbounddx}
	 Let $F\subseteq\RN$ be any
	 (non-void) set and let $p\in (1,\infty)$. 
	 Then, $F$ is $p_\LL$-bounded
	 \emph{if and only if} $F$ satisfies the following property: 
	 \emph{there exists
	 countable family $\mathcal{G} = \{B_\mathrm{X}(x_n,\rho_n)\}_{n\in J}$ such that
	 (setting $d_\mathrm{X}(x) := d_\mathrm{X}(0,x)$)}
	$$(\diamond)\qquad
	F\subseteq \bigcup_{n\in J}B_\mathrm{X}(x_n,\rho_n) \qquad\text{and}\qquad 
	\sum_{n \in J}\bigg(\frac{E(x_n,\rho_n)}{d^{Q-2}_\mathcal{X}(x_n)}\bigg)^p < \infty.$$
	\end{proposition}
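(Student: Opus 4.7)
The plan is to translate between coverings by $\Gamma$-balls and by $d_\mathcal{X}$-balls using the two-sided inclusion \eqref{eq.relationGammaballdball}, together with the identity $\Lambda(0,r)=\omega_Q\,r^Q$ from Remark \ref{rem.propertiesELambda}-(d). The latter immediately yields $E(0,r)=\omega_Q\,r^{Q-2}$; combined with Theorem \ref{thm.globalSanchez} this will give the crucial universal comparability
$$\gamma(0,x) \;=\; \frac{1}{\Gamma(0,x)} \;\asymp\; E\big(0,d_\mathcal{X}(0,x)\big) \;=\; \omega_Q\,d_\mathcal{X}^{Q-2}(0,x),$$
for every $x\in\RN\setminus\{0\}$, with constants independent of $x$. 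Hence $\gamma(0,x_n)$ and $d_\mathcal{X}^{Q-2}(x_n)$ are universally comparable, and replacing one by the other in any $\ell^p$-sum only changes the value by a multiplicative constant. Once this is established, the series appearing in Definition \ref{def.pset} and in condition $(\diamond)$ are of the same nature.

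For the \emph{only if} direction, I would start from a family $\{\Omega(x_n,r_n)\}_{n\in J}$ realizing the $p_\LL$-boundedness of $F$ and set $\rho_n := H(x_n,\,c_2\,r_n)$, where $c_2$ is the constant of Theorem \ref{thm.globalSanchez} and $H(x,\cdot)$ is the inverse of $E(x,\cdot)$. By construction $E(x_n,\rho_n) = c_2\,r_n$, and the second inclusion in \eqref{eq.relationGammaballdball} gives $\Omega(x_n,r_n)\subseteq B_\mathcal{X}(x_n,\rho_n)$, so the new family covers $F$; by the comparability above,
$$\sum_{n\in J}\bigg(\frac{E(x_n,\rho_n)}{d_\mathcal{X}^{Q-2}(x_n)}\bigg)^{\!p} \;\leq\; C\,\sum_{n\in J}\bigg(\frac{r_n}{\gamma(0,x_n)}\bigg)^{\!p} \;<\; \infty,$$
which is precisely $(\diamond)$.

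For the converse, given a family $\{B_\mathcal{X}(x_n,\rho_n)\}$ satisfying $(\diamond)$, I would instead set $r_n := c_2\,E(x_n,\rho_n)$, so that $\rho_n = H(x_n,r_n/c_2)$ and the first inclusion in \eqref{eq.relationGammaballdball} yields $B_\mathcal{X}(x_n,\rho_n)\subseteq\Omega(x_n,r_n)$. The same universal comparability then makes $\sum_n(r_n/\gamma(0,x_n))^p$ finite, so $F$ is $p_\LL$-bounded. The whole argument is essentially a bookkeeping exercise once \eqref{eq.relationGammaballdball} and the identity $E(0,r)=\omega_Q\,r^{Q-2}$ are in hand, and I do not foresee any substantial obstacle; the only minor care concerns indices for which $x_n=0$, which can be handled by assuming the covering balls avoid the origin (or by splitting off a single ball containing $0$ and dealing with it separately), since both notions are insensitive to such a finite modification.
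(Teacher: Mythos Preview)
Your proposal is correct and follows essentially the same route as the paper: both directions use the very same conversions $\rho_n=H(x_n,c_2 r_n)$ and $r_n=c_2\,E(x_n,\rho_n)$, the corresponding inclusions from \eqref{eq.relationGammaballdball}, and the comparability $\Gamma_0(x_n)\asymp d_\mathcal{X}^{Q-2}(x_n)^{-1}$ obtained from Theorem \ref{thm.globalSanchez} together with $\Lambda(0,r)=\omega_Q r^Q$. The only cosmetic difference is that you state this last comparability once at the outset, whereas the paper re-derives the one-sided inequality needed in each direction.
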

	\begin{proof}
	 $(\Rightarrow)$\,\,Since, by assumption, $F$ is $p_\LL$-bounded,
	 it is possible to find a count\-able family
   $\mathcal{F} = \{\Omega(x_n,r_n)\}_{n\in J}$ 
   such that (see Definition \ref{def.pset}) 
   $$F\subseteq\bigcup_{n\in J}\Omega(x_n,r_n) \qquad\text{and}
   \qquad \sum_{n\in J}\big(\Gamma_0(x_n)\,r_n\big)^p < \infty;$$
   on the other hand, by the second inclusion in \eqref{eq.relationGammaballdball}, 
   for every $n\in J$ we have
   $$\Omega(x_n,r_n)\subseteq B_\mathcal{X}(x_n,\rho_n), \qquad\text{where $\rho_n = H(x_n,c_2 r_n)$}.$$
   Thus, if we consider the family 
   $\mathcal{G} = \{B_\mathrm{X}(x_n,\rho_n)\}_{n\in J}$, we see that
   $\mathcal{G}$ is a countable cover of $F$ such that
   (remind that $H(x,\cdot) = (E(x,\cdot))^{-1}$)
   $$\infty > \sum_{n\in J}\big(\Gamma_0(x_n)\,r_n\big)^p
   = \frac{1}{c_2^p}\,\sum_{n\in J}\big(\Gamma_0(x_n)\,
   E(x_n,\rho_n)\big)^p =: \big(\bigstar\big).$$
 	We now turn to give an estimate of $\Gamma_0(x_n)$ in terms
 	of $d_\mathcal{X}(x_n)$. To this end we observe that,
 	by the first inequality in \eqref{eq.globalSanchez}, we have
 	$$\Gamma_0(x_n) \geq \frac{1}{c_2}\,\frac{d^2_\mathcal{X}(x_n)}
 	{\Lambda\big(0,d_\mathcal{X}(x_n)\big)}
 	\quad \text{for every $n\in J$};$$
 	from this, taking into account 
 	\eqref{eq.Lambda0rrQ} in Remark 
 	\ref{rem.propertiesELambda}\,-\,$\mathrm{(d)_1}$, 
 	we obtain
 	\begin{equation*}
 	 \Gamma_0(x_n)\geq \frac{1}{\omega_Q c_2}\cdot\frac{1}{d_\mathrm{X}^{Q-2}(x_n)} \quad
 	 \text{for every $n\in J$}.
 	\end{equation*}
	By means of this last estimate we conclude that
	$$\infty > \big(\bigstar\big) \geq \bigg(\frac{1}{\omega_Q c_2^2}\bigg)^p\,
	\sum_{n \in J}\bigg(\frac{E(x_n,\rho_n)}{d^{Q-2}_\mathcal{X}(x_n)}\bigg)^p,$$
	and this proves that $\mathcal{G}$ satisfies $(\diamond)$. \medskip
	
	$(\Leftarrow)$\,\,Let $\mathcal{G} = \{B_\mathcal{X}(x_n,\rho_n)\}_{n\in J}$
	be a countable family of $d_\mathcal{X}$-balls satisfying $(\diamond)$. 
	By the first inclusion
	in
	\eqref{eq.relationGammaballdball}
	(and again by the fact that the maps $E(x,\cdot)$
	are $H(x,\cdot)$ are inverse to each other), it is easy to 
	recognize that
	$$B_\mathcal{X}(x_n,\rho_n) \subseteq \Omega(x_n,r_n), \qquad \text{where
	$r_n = c_2\cdot E(x_n,\rho_n)$};$$
	thus, if we consider the family
	$\mathcal{F} := \{\Omega(x_n,r_n)\}_{n\in J}$, we see that
	$\mathcal{F}$ is a countable cover of $F$ 
	(since the same is true of $\mathcal{G}$) and that
	$$\infty >
	\sum_{n \in J}\bigg(\frac{E(x_n,\rho_n)}{d^{Q-2}_\mathcal{X}(x_n)}\bigg)^p
	= \frac{1}{c_2^p}\,\sum_{n\in J}\bigg(\frac{r_n}{d^{Q-2}_\mathcal{X}(x_n)}\bigg)^p =: \big(\bigstar\big).$$
	On the other hand, by using the second inequality in \eqref{eq.globalSanchez}
	and by using again 
	\eqref{eq.Lambda0rrQ} in Remark \ref{rem.propertiesELambda}\,-\,$\mathrm{(d)_1}$,
	we derive that
	$$\Gamma_0(x_n) \leq c_2\,\frac{d^2_\mathcal{X}(x_n)}
 	{\Lambda\big(0,d_\mathcal{X}(x_n)\big)}
 	= \frac{c_2}{\omega_Q}\cdot\frac{1}{d^{Q-2}(x_n)} \quad \text{for every $n\in J$};$$
 	as a consequence, we obtain
 	$$\infty > \big(\bigstar\big)
 	\geq \bigg(\frac{\omega_Q}{c_2^2}\bigg)^p\,\sum_{n\in J}\big(\Gamma_0(x_n)\,r_n\big)^p$$
 	and this proves that $F$ is $p_\LL$-bounded. 
	\end{proof}
	\begin{remark} \label{rem.finalLiegroups}
	As a final remark we observe that, in the particular case when
	$X_1,\ldots,X_m$ are \emph{Lie generators}
	of the Lie algebra of some homogeneous Carnot group
	on $\RN$ (see \cite[Chapter 1]{BLUlibro} for the relevant definitions), we have
	$$E(x,r) = \omega_Q\,r^{Q-2} \qquad\text{for every $x\in\RN$ and every $r > 0$};$$ 
	as a consequence, a set $F\subseteq\RN$ is $p_\LL$-bounded (for some $p > 1$)
	\emph{if and only if} there exists a countable family
	$\mathcal{G} = \{B_\mathrm{X}(x_n,\rho_n)\}_{n\in J}$ such that
	$$F\subseteq \bigcup_{n\in J}B_\mathrm{X}(x_n,\rho_n) \qquad\text{and}\qquad 
	\sum_{n \in J}\bigg(\frac{\rho_n}{d_\mathcal{X}(x_n)}\bigg)^{p(Q-2)} < \infty.$$
	Due to this fact, the results presented in this paper comprehend and generalize 
	that contained in \cite{BonfiLancoMP} (see also \cite[Chapter 10]{BLUlibro}).
  \end{remark}
  \appendix
  \section{Appendix: some results of Potential Theory} \label{sec:Appendix}
  The main aim of this brief appendix is to collect some notions and
  results, coming from Potential Theory, needed to prove 
  Lemma \ref{lem.gluingsubHb} in Section
  \ref{sec:thinsets}.
  In our exposition we mainly follow the book by Br{e}lot \cite{Brelot}, to which
  we refer for a detailed treatment of these topics (and for the proof
  of all the results we are going to state); we also highlight
  the very classical references \cite{Bauer,ConstCornea}. \medskip
  
  Throughout the sequel, we denote by $\LL$ a fixed linear PDO as in \eqref{eq:PDOformintro}
  and satisfying the structural assumptions (H1)-to-(H3);
  moreover, we tacitly inherit all the notations
  introduced in the previous sections.
  \subsection*{The $\LL$-harmonic space}
  We begin with the following simple observation: if $\tau_\mathcal{E}$
  denotes the usual Euclidean topology on $\RN$, then the assignment 
  \begin{equation} \label{eq.mapsheaf}
  \tau_\mathcal{E}\ni\Omega\mapsto \mathcal{L}(\Omega)
  = \{u\in C^2(\Omega,\R):\LL u = 0\,\,\text{in $\Omega$}\},
  \end{equation}
  is a sheaf of functions on $\RN$. More precisely, we have
  \medskip

   (i)\,\,for any $\Omega\in\tau_\mathcal{E}$, 
   $\mathcal{L}(\Omega)$ is a linear subspace
   of $C(\Omega,\R)$; \vspace*{0.35cm}

   (ii)\,\,if $\Omega_1\subseteq\Omega_2$ are open subsets of $\RN$
    and if $u\in\mathcal{L}(\Omega_2)$, then 
    $u_{\big|\Omega_1}\in\mathcal{L}(\Omega_1)$; \medskip

    (iii)\,\,if $\{\Omega_i\}_{i\in I}\subseteq\tau_\mathcal{E}$ 
    and if $u:\Omega:=\bigcup_{i\in I}\Omega_i\to\R$, then
    $$\bigg(\text{$u_{\big|\Omega_i}\in\mathcal{L}(\Omega_i)$ for all $i\in I$}\bigg)\,\,
    \Longrightarrow \,\, u\in\mathcal{L}(\Omega).$$  
  \medskip
  
  \noindent Let now $\Omega\subseteq\RN$ be an open set. We say that $\Omega$ is 
  \emph{$\LL$-regular} if
   \begin{itemize}
    \item[(i)] $\overline{\Omega}$ is compact;
    \item[(ii)] for every continuous function $f:\de\Omega\to\R$ there exists a unique
    $\mathcal{L}$-harmonic function in $\Omega$, denoted by
    $H^\Omega_f$, such that
    $$\lim_{x\to\xi}H^\Omega_f(x) = f(\xi), \qquad \text{for all $\xi\in\de\Omega$};$$
    \item[(iii)] if $f\geq 0$ on $\de\Omega$, then $H^V_f\geq 0$ in $\Omega$.
   \end{itemize}
   It is very easy to see that, if $\Omega$ is $\LL$-regular, the map
   $$T:C(\de\Omega,\R)\longto\R, \qquad T(f) := H^\Omega_f(x)$$
   is linear and positive; since $\de\Omega$ is compact,
   the Riesz Representation Theorem (see, e.g., \cite{Rudin}) provides
   us with a unique Radon measure $\mu^\Omega_x$ on $\Omega$ such that
   $$H^\Omega_f(x) = \int_{\de\Omega}f(y)\,\d\mu^\Omega_x(y).$$
   The measure $\mu^\Omega_x$ is called the \emph{$\mathcal{L}$-harmonic measure}
   related to $\Omega$ and $x$.  \medskip

   As a consequence of some results proved in \cite{BBB}
   (see, precisely, Lemma 1.7 and Theorem 1.10), we see that the following
   facts hold true for our PDO $\LL$:
   \begin{itemize}
    \item[(a)] there exists a (countable) basis for the Euclidean topology of $\RN$
    consisting of connected $\LL$-regular open sets;
    \item[(b)] for every connected open set $\Omega\subseteq \RN$
     and every compact set $K\subseteq \Omega$, there exists a constant
     $C = C(\Omega,K) \geq 1$ such that
     $$\sup_K u \leq C\,\inf_K u,$$
     for every non-negative harmonic function $u$ in $\Omega$.
   \end{itemize}
	On the other hand, the validity of (a) and (b) easily implies the following results.
	\begin{itemize}
	 \item[(1)] Let $\Omega\subseteq\RN$ be an open
	 set (not necessarily $\LL$-regular) and let $u\in C(\Omega,\R)$.
	 Then the function $u$ is $\LL$-harmonic in $\Omega$ if and only if
	 $$u(x) = \int_{\de V}u(z)\,\d\mu^V_x(z),$$
	 for every $\LL$-regular open set
	  $V\subseteq\overline{V}\subseteq\Omega$ and every $x\in V$.
	 \item[(2)] If $\Omega\subseteq \RN$ is open and connected
     and $\{u_n\}_n\subseteq\mathcal{L}(\Omega)$ is monotone increasing, then
     either $\sup_n u_n \equiv \infty$ on 
     $\Omega$ or it is a $\mathcal{L}$-harmonic function in
     $\Omega$.
	\end{itemize}
    Gathering together all these facts, we recognize that the map 
    defined in \eqref{eq.mapsheaf}
    sa\-tisfies Axioms 1-to-3 in \cite{Brelot}; hence,
    it endows $\RN$ with the structure of a harmonic sheaf, which
    is usually referred to as the \emph{$\LL$-harmonic space}.
    \subsection*{$\LL$-subharmonic functions}
    Let $\Omega\subseteq\RN$ be an open set
    and let $u:\Omega\to [-\infty,\infty)$ be u.s.c.\,on $\Omega$.
    As already said in the Introduction,
    the function $u$ is \emph{$\LL$-subharmonic} in $\Omega$ if
    \begin{itemize}
  	\item[(i)] $\{x\in\Omega: u(x) > -\infty\}$ is dense in $\Omega$;
  
  	\item[(ii)] for every bounded open 
  	set $V\subseteq\overline{V}\subseteq\Omega$ and for every
  function $h$ $\LL$-harmonic in $V$ and continuous up to $\de V$ such that
  $u_{\big| \de V}\leq h_{\big|\de V}$, one has $u\leq h$ in $V$.
  \end{itemize}
  \begin{remark} \label{rem.defisubHorig}
   Let $\Omega\subseteq\RN$ be open and let $u\in\subH(\Omega)$. Moreover,
   let $V$ be a $\LL$-regular open set such that $\overline{V}\subseteq \Omega$.
   If $\varphi\in C(\de V,\R)$ is any continuous function
   satisfying $u\leq \varphi$
  on $\de V$,
   by (ii) we have
  $$u(x) \leq H^V_\varphi(x) = \int_{\de V}\varphi(z)\,\d\mu^V_x(z)
  \quad \text{for every $x\in V$}.$$
   From this, due to the arbitrariness of $\varphi$, we obtain
	\begin{equation} \label{eq.defusubHoriginal}   
   u(x) \leq \int_{\de V}u(z)\,\d\mu^V_x(z)
  \quad \text{for every $x\in V$}.
  \end{equation}
  \end{remark}
  For $\mathcal{L}$-subharmonic functions, we have the following
   minimum principles.
   \begin{theorem} \label{thm.minimumsupH}
    Let $\Omega\subseteq \RN$ be open
    and let $u\in\subH(\Omega)$. The following facts hold:
    \begin{itemize}
     \item[\emph{1.}] if $\Omega$ is connected and $u\leq 0$
     on $\Omega$, then either $u \equiv 0$ or $u < 0$;
     \item[\emph{2.}] if $\Omega$ is \emph{bounded},
      $\liminf_{x\to\xi}u(x)\leq 0$ for any $\xi\in\de\Omega$
     and there exists a $\mathcal{L}$-harmonic function $h$ such that $\inf_\Omega h > 0$,
     then $u\leq 0$ on $\Omega$.
    \end{itemize}
   \end{theorem}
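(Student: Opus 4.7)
Both minimum principles will be derived inside the $\LL$-harmonic space set up in the appendix, exploiting only the sub-mean-value formula \eqref{eq.defusubHoriginal}, the basis of $\LL$-regular open sets (fact~(a)) and the Harnack-type estimate (fact~(b)); once these are in hand, the arguments are classical.

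For part~1 I would adopt the standard open/closed strategy applied to $A := \{x\in\Omega: u(x)=0\}$. Since $u$ is upper semicontinuous and $u\leq 0$, the set $A=\{u\geq 0\}$ is closed in $\Omega$. To show that $A$ is open, fix $x_0\in A$ and, by fact~(a), choose a connected $\LL$-regular open set $V$ with $x_0\in V$ and $\overline V\subseteq\Omega$. Define $\hat u(x):=\int_{\de V}u(z)\,\d\mu^V_x(z)$: this function is $\LL$-harmonic in $V$, satisfies $\hat u\leq 0$ (since $u\leq 0$ on $\de V$), and by \eqref{eq.defusubHoriginal} dominates $u$ on $V$, so $\hat u(x_0)\geq u(x_0)=0$ and hence $\hat u(x_0)=0$. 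Applying the Harnack estimate in fact~(b) to the non-negative $\LL$-harmonic function $-\hat u$ on the connected set $V$ (exhausted by compacts through $x_0$) yields $\hat u\equiv 0$ on $V$, whence $\int_{\de V}u\,\d\mu^V_x=0$ and therefore $u=0$ $\mu^V_x$-almost everywhere on $\de V$ for every $x\in V$. Exploiting the standard positivity of the harmonic measures on regular boundaries, together with upper semicontinuity of $u$ and an iteration on a shrinking basis of regular neighborhoods of $x_0$, one propagates the identity $u\equiv 0$ to an open Euclidean neighborhood of $x_0$; thus $A$ is open, and by connectedness of $\Omega$ either $A=\Omega$ (giving $u\equiv 0$) or $A=\varnothing$ (giving $u<0$).

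For part~2 I would use the positive $\LL$-harmonic barrier $h$, reading the boundary hypothesis as $\limsup_{x\to\xi}u(x)\leq 0$, which is the condition natural for a maximum principle on subharmonic functions. For each $\e>0$ the function $v_\e:=u-\e h$ is $\LL$-subharmonic on $\Omega$ and satisfies $\limsup_{x\to\xi}v_\e(x)\leq -\e\,\inf_\Omega h<0$ at every $\xi\in\de\Omega$. Since $\Omega$ is bounded, this limsup condition yields an open neighborhood $U$ of $\de\Omega$ on which $v_\e<0$; on the compact set $\overline\Omega\setminus U$ the u.s.c.\,function $v_\e$ attains its supremum $M_\e$. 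If $M_\e>0$ were attained at an interior point $y^*\in\Omega$, then part~1 applied to $v_\e-M_\e\leq 0$ on the connected component of $\Omega$ containing $y^*$ would force $v_\e\equiv M_\e>0$ on that component, contradicting $\limsup v_\e<0$ at the points of its boundary lying in $\de\Omega$. Hence $v_\e\leq 0$ in $\Omega$, i.e.\,$u\leq\e h$, and letting $\e\to 0^+$ gives $u\leq 0$.

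The main technical obstacle is the openness step in part~1: the sub-mean inequality alone produces only $u\leq\hat u\equiv 0$, and upgrading this to $u\equiv 0$ on an open neighborhood of $x_0$ is the delicate point, requiring the full-support property of the harmonic measures on regular boundaries together with an iteration on nested regular neighborhoods. All these ingredients are standard in Brelot harmonic spaces (see \cite{Brelot,Bauer,ConstCornea}) and available here thanks to facts~(a) and~(b). Once part~1 is secured, part~2 is a routine barrier/compactness argument as sketched above.
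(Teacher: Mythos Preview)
The paper does not supply its own proof of this theorem: the Appendix merely states it and defers all demonstrations to \cite{Brelot,Bauer,ConstCornea}, while the Introduction simply invokes the strong Harnack inequality of \cite{BBB} as the underlying reason. So there is no in-paper argument to compare against; your sketch is essentially the classical route one finds in those references.

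Two remarks on the sketch itself. First, you are right that the boundary hypothesis in part~2 should be $\limsup$ rather than $\liminf$; the version actually used in the body of the paper (see \eqref{eq.WMPintro}) confirms this is a typo in the Appendix. With $\limsup$, your barrier argument via $v_\e=u-\e h$ and part~1 on the connected component of a would-be interior maximum is correct and complete.

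Second, for part~1 the step you flag is indeed the only nontrivial one, and your diagnosis is accurate. From $\hat u\equiv 0$ on $V$ one gets $u=0$ $\mu^V_{x_0}$-a.e.\ on $\de V$; Harnack together with $\LL$-regularity of $V$ forces $\mu^V_{x_0}$ to have full support on $\de V$ (a nonnegative $H^V_f$ vanishing at $x_0$ would be identically zero, contradicting the boundary limit at points where $f>0$), and then u.s.c.\ of $u$ gives $u\equiv 0$ on all of $\de V$. Propagating from $\de V$ to a full neighbourhood of $x_0$ is the genuinely delicate step in the abstract Brelot setting; your plan to iterate over nested regular neighbourhoods is the right idea, though the details (ruling out that $\{u<0\}$ persists as an ``island'' inside $V$) require the machinery of absorbing sets as developed in \cite{Bauer,ConstCornea}. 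In the concrete situation of this paper one can bypass the abstract argument entirely and appeal directly to the strong maximum principle of \cite{BBB}.
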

   Theorem \ref{thm.minimumsupH} allows us to prove
   that condition \eqref{eq.defusubHoriginal} in Remark \ref{rem.defisubHorig}
   actually characterizes, even in a suitable local form,
   $\mathcal{L}$-subharmonicity.
   \begin{proposition} \label{prop.localcritsuph}
    Let $\Omega\subseteq \RN$ be open and let $u:\Omega\to(-\infty,\infty]$
    be a u.s.c.\,fun\-ction such that the set $D := \{x\in\Omega:u(x)>-\infty\}$
    is dense in $\Omega$.

    Then the following conditions are equivalent:
    \begin{itemize}
    \item[\emph{(a)}] $u\in\subH(\Omega)$;
	\item[\emph{(b)}] for every $\LL$-regular open set
	$V\subseteq\overline{V}\subseteq\Omega$
	and for every function $\varphi\in C(\de V,\R)$ satisfying $u\leq \varphi$ on $\de V$, one has    
	$$u(x) \leq \int_{\de V}u(z)\,\d\mu^V_x(z)
  \quad \text{for every $x\in V$}.$$
    \item[\emph{(c)}] for every $x_0\in\Omega$ there exists a basis
    $\mathcal{B}_u$ (possibly depending on $u$)
    of $\mathcal{L}$-regular open neighborhoods of $x_0$ such that,
    for any $V\in\mathcal{B}_u$, one has
    $$u(x_0)\leq \int_{\de V}u(y)\,\d\mu^V_{x_0}(y).$$
    \end{itemize}
   \end{proposition}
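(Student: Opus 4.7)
The plan is to prove the three-way equivalence by the cyclic chain $(\mathrm{a})\Rightarrow(\mathrm{b})\Rightarrow(\mathrm{c})\Rightarrow(\mathrm{a})$, where the first two implications are essentially soft and the last one carries all the substantive content.

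For $(\mathrm{a})\Rightarrow(\mathrm{b})$: fix an $\LL$-regular $V$ with $\overline V\subseteq \Omega$ and a continuous $\varphi\geq u$ on $\de V$. The Dirichlet solution $H^V_\varphi$ is $\LL$-harmonic in $V$, continuous up to $\de V$, with boundary trace $\varphi\geq u$; hence by condition (ii) in the definition of $\LL$-subharmonicity, $u\leq H^V_\varphi=\int_{\de V}\varphi\,\d\mu^V_x$ in $V$. Since $u$ is u.s.c.\ on the compact set $\de V$ and bounded above there, a standard approximation yields a decreasing sequence $\varphi_n\in C(\de V,\R)$ with $\varphi_n\downarrow u$ pointwise; substituting each $\varphi_n$ and passing to the limit by monotone convergence (the $\LL$-harmonic measure $\mu^V_x$ is a probability measure since $h\equiv 1\in\mathcal{L}(V)$) gives $u(x)\leq\int_{\de V}u(z)\,\d\mu^V_x(z)$. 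The implication $(\mathrm{b})\Rightarrow(\mathrm{c})$ is then immediate: by fact~(a) recalled earlier, each point $x_0\in\Omega$ admits a basis $\mathcal{B}$ of $\LL$-regular open neighborhoods with closure inside $\Omega$, and any such $\mathcal{B}$ witnesses (c).

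For $(\mathrm{c})\Rightarrow(\mathrm{a})$, the key idea is to transfer the local sub-mean inequality into the required maximum-principle comparison against $\LL$-harmonic functions. Fix a bounded open $V$ with $\overline V\subseteq\Omega$ and a function $h\in\mathcal{L}(V)\cap C(\overline V,\R)$ with $u\leq h$ on $\de V$, and set $w:=u-h$. Since every $\LL$-harmonic function $h$ satisfies the exact mean-value identity $h(x_0)=\int_{\de W}h\,\d\mu^W_{x_0}$ on every $\LL$-regular $W$ with $\overline W\subseteq V$ (fact~(1) above), the difference $w$ inherits property (c), via the same basis $\mathcal{B}_u$ intersected with the family of $\LL$-regular sets contained in $V$. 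Put $M:=\sup_V w$; by boundedness of $V$ and upper semi-continuity, and using $w\leq 0$ on $\de V$ via the boundary hypothesis, either $M\leq 0$ (and we are done) or $M>0$ is attained at some $x_0\in V$. Applying (c) at $x_0$ to a small $\LL$-regular neighborhood $W\in\mathcal{B}_u$ with $\overline W\subseteq V$, and using that $\mu^W_{x_0}$ is a probability measure, the inequality $M=w(x_0)\leq\int_{\de W}w\,\d\mu^W_{x_0}\leq M$ forces $w=M$ on $\de W$ up to a $\mu^W_{x_0}$-null set. Combining this with the fact that $\mu^W_{x_0}$ has full topological support on $\de W$ (a consequence of $\LL$-regularity: the Bouligand-type barrier $\Gamma_\xi-1/r$ forces harmonic measure to charge every neighborhood of each boundary point) and with u.s.c.\ of $w$, we deduce $w\equiv M$ on $\de W$; varying $W$ within $\mathcal{B}_u$ then shows the superlevel set $\{w=M\}$ is clopen in the connected component of $V$ containing $x_0$, forcing $w\equiv M$ there and contradicting $w\leq 0$ on $\de V$.

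The main obstacle is the $(\mathrm{c})\Rightarrow(\mathrm{a})$ step, and specifically the propagation-of-maximum argument: one needs the full-support property of $\LL$-harmonic measure on $\de W$ for $\LL$-regular $W$, to upgrade the almost-everywhere equality $w=M$ to an everywhere equality on $\de W$. This is not a formal consequence of the axioms of a harmonic sheaf and relies on the availability of Bouligand barriers, which in turn rest on the existence of a positive potential (namely $\Gamma_x$) in our setting. Once this topological-support fact is granted, the clopen/connectedness conclusion and hence the maximum principle follow by the standard Brelot-type argument outlined above.
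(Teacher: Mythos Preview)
The paper does not supply its own proof of this proposition: it is stated in the Appendix with an explicit deferral to Brelot's monograph (``to which we refer \ldots\ for the proof of all the results we are going to state''). So there is no in-paper argument to compare against, and your attempt must be judged on its own.

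Your cyclic scheme is the right one, and $(\mathrm a)\Rightarrow(\mathrm b)$ and $(\mathrm b)\Rightarrow(\mathrm c)$ are correct; the first is exactly the content of Remark~\ref{rem.defisubHorig}. One small correction: your justification of the full support of $\mu^W_{x_0}$ via the barrier $\Gamma_\xi-1/r$ is specific to $\Gamma$-balls and does not apply to a general $\LL$-regular $W$. The correct (and simpler) reason is Harnack: if $\mu^W_{x_0}(U)=0$ for some relatively open $U\subseteq\de W$ containing $\xi$, pick $\varphi\in C(\de W,\R)$ with $0\le\varphi\le 1$, $\varphi(\xi)=1$, $\mathrm{supp}\,\varphi\subseteq U$; then $H^W_\varphi\ge 0$ with $H^W_\varphi(x_0)=0$, so $H^W_\varphi\equiv 0$ on the (connected) $W$ by the strong minimum principle, contradicting $H^W_\varphi\to 1$ at $\xi$.

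There is, however, a genuine gap in your $(\mathrm c)\Rightarrow(\mathrm a)$. From ``$w\equiv M$ on $\de W$ for every $W\in\mathcal B_u(x_0)$'' you infer that $E:=\{w=M\}$ is open near $x_0$. This does not follow. Condition~(c) only guarantees \emph{some} basis $\mathcal B_u$, which may perfectly well be countable (a sequence of shrinking neighborhoods), and then $\bigcup_{W\in\mathcal B_u}\de W$ need not contain a neighborhood of $x_0$; knowing $\de W\subseteq E$ gives no information about points of $W\setminus\de W$. So ``varying $W$'' does not yield the clopen claim.

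A clean way to close the argument is by compactness. With $M:=\sup_{\overline V}w>0$ and $w\le 0$ on $\de V$, the set $E=\{w=M\}$ is a nonempty compact subset of $V$. For each $x\in E$ pick (using (c) and the full-support step above) a connected $W_x\in\mathcal B_u(x)$ with $\overline{W_x}\subseteq V$ and $\de W_x\subseteq E$. Extract a finite subcover $E\subseteq U:=W_{x_1}\cup\cdots\cup W_{x_k}$. Then
\[
\de U\ \subseteq\ \bigcup_{i=1}^k\de W_{x_i}\ \subseteq\ E\ \subseteq\ U,
\]
while $U$ is open, so $\de U\cap U=\varnothing$; hence $\de U=\varnothing$. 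But $U$ is a nonempty bounded open subset of $\RN$ (indeed $\overline U\subseteq V$), and such a set has nonempty boundary. This contradiction rules out $M>0$, giving $w\le 0$ on $V$, i.e.\ $u\le h$, and (a) follows.
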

   \subsection*{Proof of Lemma \ref{lem.gluingsubHb}}
	Thanks to Proposition \ref{prop.localcritsuph},
	we are finally in a position to prove Lemma \ref{lem.gluingsubHb}. For the
	sake of clarity, we re-write here its statement.
	\begin{lemma} \label{lem.gluingsubHb_bisApp}
   Let $\Omega\subseteq\RN$ be open and let
   $u\in\subH(\Omega)$ be such that
   \begin{equation} \label{eq.conditionlemma_bisApp}
    \limsup_{x\to \xi}u(x) \leq 0 \quad \text{for every $\xi\in\de\Omega$}.
   \end{equation}
   Then the function $v:\RN\to[-\infty,\infty)$ defined by
   $$v(x) = \begin{cases}
    \max\{u(x),0\}, & \text{if $x\in\Omega$}, \\[0.1cm]
    0, & \text{if $x\in\RN\setminus\Omega$},
   \end{cases}
   $$
   is $\LL$-subharmonic in $\RN$.
  \end{lemma}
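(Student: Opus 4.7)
The plan is to verify, by means of the local characterization of $\LL$-subharmonicity provided in Proposition \ref{prop.localcritsuph}-(c), the three defining properties: upper semicontinuity of $v$ on $\RN$, density of $\{v>-\infty\}$, and a local sub-mean-value inequality at every point of $\RN$. The key structural fact that will make the argument short is the pointwise lower bound $v\geq 0$ on the whole of $\RN$.

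First I would check upper semicontinuity of $v$. At interior points of $\Omega$ this is immediate, since $v=\max\{u,0\}$ with $u$ u.s.c.\ in $\Omega$; at interior points of $\RN\setminus\Omega$ the function $v$ is identically zero in a neighborhood. The delicate case is a boundary point $x_0\in\de\Omega$, where $v(x_0)=0$; here I would use the hypothesis \eqref{eq.conditionlemma_bisApp} to get
\[
\limsup_{x\to x_0}v(x)\;=\;\max\Bigl\{0,\;\limsup_{\substack{x\to x_0\\ x\in\Omega}}u(x)\Bigr\}\;\leq\;0\;=\;v(x_0).
\]
The density condition $\{v>-\infty\}=\RN$ is trivial since $v\geq 0$ throughout.

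Next I would establish the local sub-mean-value inequality $v(x_0)\leq\int_{\de V}v\,\d\mu^V_{x_0}$ along a basis of $\LL$-regular neighborhoods $V$ of $x_0$, distinguishing two cases. If $x_0\notin\Omega$, any basis of $\LL$-regular neighborhoods of $x_0$ works, because $v(x_0)=0$, the measure $\mu^V_{x_0}$ is positive, and $v\geq 0$ everywhere. If $x_0\in\Omega$, I would choose a basis of $\LL$-regular neighborhoods $V$ with $\overline V\subseteq\Omega$ along which the sub-mean inequality for $u$ holds, which is granted by Proposition \ref{prop.localcritsuph}-(c) applied to $u\in\subH(\Omega)$. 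When $u(x_0)\geq 0$ one has $v(x_0)=u(x_0)\leq \int_{\de V}u\,\d\mu^V_{x_0}\leq\int_{\de V}v\,\d\mu^V_{x_0}$, the last step using $u\leq v$ on $\de V$ and the positivity of the measure; when $u(x_0)<0$ one has $v(x_0)=0\leq\int_{\de V}v\,\d\mu^V_{x_0}$ again by $v\geq 0$.

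The only step that genuinely uses the boundary hypothesis \eqref{eq.conditionlemma_bisApp} is the upper semicontinuity at points of $\de\Omega$; the potential-theoretic ``glueing'' across $\de\Omega$, which might \emph{a priori} look like the main obstacle, dissolves because of the sign of $v$: every point of $\RN\setminus\Omega$ is automatically a sub-mean point for $v$, and no comparison between the behaviour of $v$ inside and outside $\Omega$ is needed.
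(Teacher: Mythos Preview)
Your proof is correct and follows essentially the same approach as the paper: both verify upper semicontinuity (using \eqref{eq.conditionlemma_bisApp} at points of $\de\Omega$), both note $v\geq 0$ for the density condition, and both apply the local criterion of Proposition \ref{prop.localcritsuph}-(c) by splitting into the cases $x_0\in\Omega$ and $x_0\notin\Omega$. The only cosmetic difference is that for $x_0\in\Omega$ the paper invokes directly that $\max\{u,0\}$ is $\LL$-subharmonic in $\Omega$ (as a maximum of two $\LL$-subharmonic functions), whereas you reprove this in situ via a case split on the sign of $u(x_0)$; the content is the same.
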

  \begin{proof}
   First of all, 
   condition \eqref{eq.conditionlemma_bisApp} ensures that $v$ is u.s.c.\,on $\Omega$; moreover,
   by the very definition of $v$, we have
   $v \geq 0 > -\infty$ on the whole of $\RN$.

   To prove that $v\in\subH(\RN)$ we show that, for every $x_0\in\RN$, there exists a
   basis $\mathcal{B}(x_0)$ of $\LL$-regular open neighborhoods of $x_0$ such that
   (see Proposition \ref{prop.localcritsuph})
   $$v(x_0) \leq \int_{\de V}v(y)\,\d\mu^V_{x_0}(y), \quad \text{for every $V\in\mathcal{B}(x_0)$}.$$
   If $x_0\in\Omega$, we can choose as $\mathcal{B}(x_0)$ the family
   of the $\LL$-regular open neighborhoods of $x_0$ with closure contained in $\Omega$: indeed,
   since $f := \max\{u,0\}$ is $\LL$-subharmonic in $\Omega$
   (as the same is true of both $u$ and $0$), we have
   $$v(x_0) = f(x_0) \leq \int_{\de V}f(y)\,\d\mu^V_{x_0}(y) = \int_{\de V}u(y)\,\d\mu^V_{x_0}(y)$$
   for every $\LL$-regular open neighborhood $V$
   of $x_0$ with $\overline{V}\subseteq\Omega$. If, instead, $x_0\notin\Omega$,
   we can choose as $\mathcal{B}(x_0)$ the family of all $\LL$-regular open neighborhoods
   of $x_0$: indeed, since $v\geq 0$ on the whole of $\RN$ (by definition), we have
   $$v(x_0) = 0 \leq \int_{\de V}u(y)\,\d\mu^V_{x_0}(y)$$
   for every $\LL$-regular open neighborhood $V$ of $x_0$. This ends the proof.   
  \end{proof}
   
\end{document}